\begin{document}
\newcommand {\emptycomment}[1]{} 

\newcommand{\nc}{\newcommand}
\newcommand{\delete}[1]{}
\nc{\mfootnote}[1]{\footnote{#1}} 
\nc{\todo}[1]{\tred{To do:} #1}

\nc{\mlabel}[1]{\label{#1}}  
\nc{\mcite}[1]{\cite{#1}}  
\nc{\mref}[1]{\ref{#1}}  
\nc{\meqref}[1]{\eqref{#1}} 
\nc{\mbibitem}[1]{\bibitem{#1}} 

\delete{
\nc{\mlabel}[1]{\label{#1}  
{\hfill \hspace{1cm}{\bf{{\ }\hfill(#1)}}}}
\nc{\mcite}[1]{\cite{#1}{{\bf{{\ }(#1)}}}}  
\nc{\mref}[1]{\ref{#1}{{\bf{{\ }(#1)}}}}  
\nc{\meqref}[1]{\eqref{#1}{{\bf{{\ }(#1)}}}} 
\nc{\mbibitem}[1]{\bibitem[\bf #1]{#1}} 
}

\newtheorem{thm}{Theorem}[section]
\newtheorem{lem}[thm]{Lemma}
\newtheorem{cor}[thm]{Corollary}
\newtheorem{pro}[thm]{Proposition}
\theoremstyle{definition}
\newtheorem{defi}[thm]{Definition}
\newtheorem{ex}[thm]{Example}
\newtheorem{rmk}[thm]{Remark}
\newtheorem{pdef}[thm]{Proposition-Definition}
\newtheorem{condition}[thm]{Condition}

\renewcommand{\labelenumi}{{\rm(\alph{enumi})}}
\renewcommand{\theenumi}{\alph{enumi}}
\renewcommand{\labelenumii}{{\rm(\roman{enumii})}}
\renewcommand{\theenumii}{\roman{enumii}}

\nc{\tred}[1]{\textcolor{red}{#1}}
\nc{\tblue}[1]{\textcolor{blue}{#1}}
\nc{\tgreen}[1]{\textcolor{green}{#1}}
\nc{\tpurple}[1]{\textcolor{purple}{#1}}
\nc{\btred}[1]{\textcolor{red}{\bf #1}}
\nc{\btblue}[1]{\textcolor{blue}{\bf #1}}
\nc{\btgreen}[1]{\textcolor{green}{\bf #1}}
\nc{\btpurple}[1]{\textcolor{purple}{\bf #1}}

\nc{\ts}[1]{\textcolor{purple}{Tianshui:#1}}
\nc{\cm}[1]{\textcolor{red}{Chengming:#1}}
\nc{\li}[1]{\textcolor{red}{#1}}
\nc{\lir}[1]{\textcolor{blue}{Li:#1}}


\nc{\twovec}[2]{\left(\begin{array}{c} #1 \\ #2\end{array} \right )}
\nc{\threevec}[3]{\left(\begin{array}{c} #1 \\ #2 \\ #3 \end{array}\right )}
\nc{\twomatrix}[4]{\left(\begin{array}{cc} #1 & #2\\ #3 & #4 \end{array} \right)}
\nc{\threematrix}[9]{{\left(\begin{matrix} #1 & #2 & #3\\ #4 & #5 & #6 \\ #7 & #8 & #9 \end{matrix} \right)}}
\nc{\twodet}[4]{\left|\begin{array}{cc} #1 & #2\\ #3 & #4 \end{array} \right|}

\nc{\rk}{\mathrm{r}}
\newcommand{\g}{\mathfrak g}
\newcommand{\h}{\mathfrak h}
\newcommand{\pf}{\noindent{$Proof$.}\ }
\newcommand{\frkg}{\mathfrak g}
\newcommand{\frkh}{\mathfrak h}
\newcommand{\Id}{\rm{Id}}
\newcommand{\gl}{\mathfrak {gl}}
\newcommand{\ad}{\mathrm{ad}}
\newcommand{\add}{\frka\frkd}
\newcommand{\frka}{\mathfrak a}
\newcommand{\frkb}{\mathfrak b}
\newcommand{\frkc}{\mathfrak c}
\newcommand{\frkd}{\mathfrak d}
\newcommand {\comment}[1]{{\marginpar{*}\scriptsize\textbf{Comments:} #1}}

\nc{\tforall}{\text{ for all }}

\nc{\svec}[2]{{\tiny\left(\begin{matrix}#1\\
#2\end{matrix}\right)\,}}  
\nc{\ssvec}[2]{{\tiny\left(\begin{matrix}#1\\
#2\end{matrix}\right)\,}} 

\nc{\typeI}{local cocycle $3$-Lie bialgebra\xspace}
\nc{\typeIs}{local cocycle $3$-Lie bialgebras\xspace}
\nc{\typeII}{double construction $3$-Lie bialgebra\xspace}
\nc{\typeIIs}{double construction $3$-Lie bialgebras\xspace}

\nc{\bia}{{$\mathcal{P}$-bimodule ${\bf k}$-algebra}\xspace}
\nc{\bias}{{$\mathcal{P}$-bimodule ${\bf k}$-algebras}\xspace}

\nc{\rmi}{{\mathrm{I}}}
\nc{\rmii}{{\mathrm{II}}}
\nc{\rmiii}{{\mathrm{III}}}
\nc{\pr}{{\mathrm{pr}}}
\newcommand{\huaA}{\mathcal{A}}

\nc{\OT}{constant $\theta$-}
\nc{\T}{$\theta$-}
\nc{\IT}{inverse $\theta$-}


\nc{\asi}{ASI\xspace}
\nc{\qadm}{$Q$-admissible\xspace}
\nc{\aybe}{AYBE\xspace}
\nc{\admset}{\{\pm x\}\cup (-x+K^{\times}) \cup K^{\times} x^{-1}}

\nc{\dualrep}{gives a dual representation\xspace}
\nc{\admt}{admissible to\xspace}

\nc{\opa}{\cdot_A}
\nc{\opb}{\cdot_B}

\nc{\post}{positive type\xspace}
\nc{\negt}{negative type\xspace}
\nc{\invt}{inverse type\xspace}

\nc{\pll}{\beta}
\nc{\plc}{\epsilon}

\nc{\ass}{{\mathit{Ass}}}
\nc{\lie}{{\mathit{Lie}}}
\nc{\comm}{{\mathit{Comm}}}
\nc{\dend}{{\mathit{Dend}}}
\nc{\zinb}{{\mathit{Zinb}}}
\nc{\tdend}{{\mathit{TDend}}}
\nc{\prelie}{{\mathit{preLie}}}
\nc{\postlie}{{\mathit{PostLie}}}
\nc{\quado}{{\mathit{Quad}}}
\nc{\octo}{{\mathit{Octo}}}
\nc{\ldend}{{\mathit{ldend}}}
\nc{\lquad}{{\mathit{LQuad}}}

 \nc{\adec}{\check{;}} \nc{\aop}{\alpha}
\nc{\dftimes}{\widetilde{\otimes}} \nc{\dfl}{\succ} \nc{\dfr}{\prec}
\nc{\dfc}{\circ} \nc{\dfb}{\bullet} \nc{\dft}{\star}
\nc{\dfcf}{{\mathbf k}} \nc{\apr}{\ast} \nc{\spr}{\cdot}
\nc{\twopr}{\circ} \nc{\tspr}{\star} \nc{\sempr}{\ast}
\nc{\disp}[1]{\displaystyle{#1}}
\nc{\bin}[2]{ (_{\stackrel{\scs{#1}}{\scs{#2}}})}  
\nc{\binc}[2]{ \left (\!\! \begin{array}{c} \scs{#1}\\
    \scs{#2} \end{array}\!\! \right )}  
\nc{\bincc}[2]{  \left ( {\scs{#1} \atop
    \vspace{-.5cm}\scs{#2}} \right )}  
\nc{\sarray}[2]{\begin{array}{c}#1 \vspace{.1cm}\\ \hline
    \vspace{-.35cm} \\ #2 \end{array}}
\nc{\bs}{\bar{S}} \nc{\dcup}{\stackrel{\bullet}{\cup}}
\nc{\dbigcup}{\stackrel{\bullet}{\bigcup}} \nc{\etree}{\big |}
\nc{\la}{\longrightarrow} \nc{\fe}{\'{e}} \nc{\rar}{\rightarrow}
\nc{\dar}{\downarrow} \nc{\dap}[1]{\downarrow
\rlap{$\scriptstyle{#1}$}} \nc{\uap}[1]{\uparrow
\rlap{$\scriptstyle{#1}$}} \nc{\defeq}{\stackrel{\rm def}{=}}
\nc{\dis}[1]{\displaystyle{#1}} \nc{\dotcup}{\,
\displaystyle{\bigcup^\bullet}\ } \nc{\sdotcup}{\tiny{
\displaystyle{\bigcup^\bullet}\ }} \nc{\hcm}{\ \hat{,}\ }
\nc{\hcirc}{\hat{\circ}} \nc{\hts}{\hat{\shpr}}
\nc{\lts}{\stackrel{\leftarrow}{\shpr}}
\nc{\rts}{\stackrel{\rightarrow}{\shpr}} \nc{\lleft}{[}
\nc{\lright}{]} \nc{\uni}[1]{\tilde{#1}} \nc{\wor}[1]{\check{#1}}
\nc{\free}[1]{\bar{#1}} \nc{\den}[1]{\check{#1}} \nc{\lrpa}{\wr}
\nc{\curlyl}{\left \{ \begin{array}{c} {} \\ {} \end{array}
    \right .  \!\!\!\!\!\!\!}
\nc{\curlyr}{ \!\!\!\!\!\!\!
    \left . \begin{array}{c} {} \\ {} \end{array}
    \right \} }
\nc{\leaf}{\ell}       
\nc{\longmid}{\left | \begin{array}{c} {} \\ {} \end{array}
    \right . \!\!\!\!\!\!\!}
\nc{\ot}{\otimes} \nc{\sot}{{\scriptstyle{\ot}}}
\nc{\otm}{\overline{\ot}}
\nc{\ora}[1]{\stackrel{#1}{\rar}}
\nc{\ola}[1]{\stackrel{#1}{\la}}
\nc{\pltree}{\calt^\pl}
\nc{\epltree}{\calt^{\pl,\NC}}
\nc{\rbpltree}{\calt^r}
\nc{\scs}[1]{\scriptstyle{#1}} \nc{\mrm}[1]{{\rm #1}}
\nc{\dirlim}{\displaystyle{\lim_{\longrightarrow}}\,}
\nc{\invlim}{\displaystyle{\lim_{\longleftarrow}}\,}
\nc{\mvp}{\vspace{0.5cm}} \nc{\svp}{\vspace{2cm}}
\nc{\vp}{\vspace{8cm}} \nc{\proofbegin}{\noindent{\bf Proof: }}
\nc{\proofend}{$\blacksquare$ \vspace{0.5cm}}
\nc{\freerbpl}{{F^{\mathrm RBPL}}}
\nc{\sha}{{\mbox{\cyr X}}}  
\nc{\ncsha}{{\mbox{\cyr X}^{\mathrm NC}}} \nc{\ncshao}{{\mbox{\cyr
X}^{\mathrm NC,\,0}}}
\nc{\shpr}{\diamond}    
\nc{\shprm}{\overline{\diamond}}    
\nc{\shpro}{\diamond^0}    
\nc{\shprr}{\diamond^r}     
\nc{\shpra}{\overline{\diamond}^r}
\nc{\shpru}{\check{\diamond}} \nc{\catpr}{\diamond_l}
\nc{\rcatpr}{\diamond_r} \nc{\lapr}{\diamond_a}
\nc{\sqcupm}{\ot}
\nc{\lepr}{\diamond_e} \nc{\vep}{\varepsilon} \nc{\labs}{\mid\!}
\nc{\rabs}{\!\mid} \nc{\hsha}{\widehat{\sha}}
\nc{\lsha}{\stackrel{\leftarrow}{\sha}}
\nc{\rsha}{\stackrel{\rightarrow}{\sha}} \nc{\lc}{\lfloor}
\nc{\rc}{\rfloor}
\nc{\tpr}{\sqcup}
\nc{\nctpr}{\vee}
\nc{\plpr}{\star}
\nc{\rbplpr}{\bar{\plpr}}
\nc{\sqmon}[1]{\langle #1\rangle}
\nc{\forest}{\calf}
\nc{\altx}{\Lambda_X} \nc{\vecT}{\vec{T}} \nc{\onetree}{\bullet}
\nc{\Ao}{\check{A}}
\nc{\seta}{\underline{\Ao}}
\nc{\deltaa}{\overline{\delta}}
\nc{\trho}{\tilde{\rho}}

\nc{\rpr}{\circ}
\nc{\dpr}{{\tiny\diamond}}
\nc{\rprpm}{{\rpr}}

\nc{\mmbox}[1]{\mbox{\ #1\ }} \nc{\ann}{\mrm{ann}}
\nc{\Aut}{\mrm{Aut}} \nc{\can}{\mrm{can}}
\nc{\twoalg}{{two-sided algebra}\xspace}
\nc{\colim}{\mrm{colim}}
\nc{\Cont}{\mrm{Cont}} \nc{\rchar}{\mrm{char}}
\nc{\cok}{\mrm{coker}} \nc{\dtf}{{R-{\rm tf}}} \nc{\dtor}{{R-{\rm
tor}}}
\renewcommand{\det}{\mrm{det}}
\nc{\depth}{{\mrm d}}
\nc{\Div}{{\mrm Div}} \nc{\End}{\mrm{End}} \nc{\Ext}{\mrm{Ext}}
\nc{\Fil}{\mrm{Fil}} \nc{\Frob}{\mrm{Frob}} \nc{\Gal}{\mrm{Gal}}
\nc{\GL}{\mrm{GL}} \nc{\Hom}{\mrm{Hom}} \nc{\hsr}{\mrm{H}}
\nc{\hpol}{\mrm{HP}} \nc{\id}{\mrm{id}} \nc{\im}{\mrm{im}}
\nc{\incl}{\mrm{incl}} \nc{\length}{\mrm{length}}
\nc{\LR}{\mrm{LR}} \nc{\mchar}{\rm char} \nc{\NC}{\mrm{NC}}
\nc{\mpart}{\mrm{part}} \nc{\pl}{\mrm{PL}}
\nc{\ql}{{\QQ_\ell}} \nc{\qp}{{\QQ_p}}
\nc{\rank}{\mrm{rank}} \nc{\rba}{\rm{RBA }} \nc{\rbas}{\rm{RBAs }}
\nc{\rbpl}{\mrm{RBPL}}
\nc{\rbw}{\rm{RBW }} \nc{\rbws}{\rm{RBWs }} \nc{\rcot}{\mrm{cot}}
\nc{\rest}{\rm{controlled}\xspace}
\nc{\rdef}{\mrm{def}} \nc{\rdiv}{{\rm div}} \nc{\rtf}{{\rm tf}}
\nc{\rtor}{{\rm tor}} \nc{\res}{\mrm{res}} \nc{\SL}{\mrm{SL}}
\nc{\Spec}{\mrm{Spec}} \nc{\tor}{\mrm{tor}} \nc{\Tr}{\mrm{Tr}}
\nc{\mtr}{\mrm{sk}}

\nc{\ab}{\mathbf{Ab}} \nc{\Alg}{\mathbf{Alg}}
\nc{\Algo}{\mathbf{Alg}^0} \nc{\Bax}{\mathbf{Bax}}
\nc{\Baxo}{\mathbf{Bax}^0} \nc{\RB}{\mathbf{RB}}
\nc{\RBo}{\mathbf{RB}^0} \nc{\BRB}{\mathbf{RB}}
\nc{\Dend}{\mathbf{DD}} \nc{\bfk}{{\bf k}} \nc{\bfone}{{\bf 1}}
\nc{\base}[1]{{a_{#1}}} \nc{\detail}{\marginpar{\bf More detail}
    \noindent{\bf Need more detail!}
    \svp}
\nc{\Diff}{\mathbf{Diff}} \nc{\gap}{\marginpar{\bf
Incomplete}\noindent{\bf Incomplete!!}
    \svp}
\nc{\FMod}{\mathbf{FMod}} \nc{\mset}{\mathbf{MSet}}
\nc{\rb}{\mathrm{RB}} \nc{\Int}{\mathbf{Int}}
\nc{\Mon}{\mathbf{Mon}}
\nc{\remarks}{\noindent{\bf Remarks: }}
\nc{\OS}{\mathbf{OS}} 
\nc{\Rep}{\mathbf{Rep}}
\nc{\Rings}{\mathbf{Rings}} \nc{\Sets}{\mathbf{Sets}}
\nc{\DT}{\mathbf{DT}}

\nc{\BA}{{\mathbb A}} \nc{\CC}{{\mathbb C}} \nc{\DD}{{\mathbb D}}
\nc{\EE}{{\mathbb E}} \nc{\FF}{{\mathbb F}} \nc{\GG}{{\mathbb G}}
\nc{\HH}{{\mathbb H}} \nc{\LL}{{\mathbb L}} \nc{\NN}{{\mathbb N}}
\nc{\QQ}{{\mathbb Q}} \nc{\RR}{{\mathbb R}} \nc{\BS}{{\mathbb{S}}} \nc{\TT}{{\mathbb T}}
\nc{\VV}{{\mathbb V}} \nc{\ZZ}{{\mathbb Z}}


\nc{\calao}{{\mathcal A}} \nc{\cala}{{\mathcal A}}
\nc{\calc}{{\mathcal C}} \nc{\cald}{{\mathcal D}}
\nc{\cale}{{\mathcal E}} \nc{\calf}{{\mathcal F}}
\nc{\calfr}{{{\mathcal F}^{\,r}}} \nc{\calfo}{{\mathcal F}^0}
\nc{\calfro}{{\mathcal F}^{\,r,0}} \nc{\oF}{\overline{F}}
\nc{\calg}{{\mathcal G}} \nc{\calh}{{\mathcal H}}
\nc{\cali}{{\mathcal I}} \nc{\calj}{{\mathcal J}}
\nc{\call}{{\mathcal L}} \nc{\calm}{{\mathcal M}}
\nc{\caln}{{\mathcal N}} \nc{\calo}{{\mathcal O}}
\nc{\calp}{{\mathcal P}} \nc{\calq}{{\mathcal Q}} \nc{\calr}{{\mathcal R}}
\nc{\calt}{{\mathcal T}} \nc{\caltr}{{\mathcal T}^{\,r}}
\nc{\calu}{{\mathcal U}} \nc{\calv}{{\mathcal V}}
\nc{\calw}{{\mathcal W}} \nc{\calx}{{\mathcal X}}
\nc{\CA}{\mathcal{A}}

\nc{\fraka}{{\mathfrak a}} \nc{\frakB}{{\mathfrak B}}
\nc{\frakb}{{\mathfrak b}} \nc{\frakd}{{\mathfrak d}}
\nc{\oD}{\overline{D}}
\nc{\frakF}{{\mathfrak F}} \nc{\frakg}{{\mathfrak g}}
\nc{\frakm}{{\mathfrak m}} \nc{\frakM}{{\mathfrak M}}
\nc{\frakMo}{{\mathfrak M}^0} \nc{\frakp}{{\mathfrak p}}
\nc{\frakS}{{\mathfrak S}} \nc{\frakSo}{{\mathfrak S}^0}
\nc{\fraks}{{\mathfrak s}} \nc{\os}{\overline{\fraks}}
\nc{\frakT}{{\mathfrak T}}
\nc{\oT}{\overline{T}}
\nc{\frakX}{{\mathfrak X}} \nc{\frakXo}{{\mathfrak X}^0}
\nc{\frakx}{{\mathbf x}}
\nc{\frakTx}{\frakT}      
\nc{\frakTa}{\frakT^a}        
\nc{\frakTxo}{\frakTx^0}   
\nc{\caltao}{\calt^{a,0}}   
\nc{\ox}{\overline{\frakx}} \nc{\fraky}{{\mathfrak y}}
\nc{\frakz}{{\mathfrak z}} \nc{\oX}{\overline{X}}

\font\cyr=wncyr10

\nc{\al}{\alpha}
\nc{\lam}{\lambda}
\nc{\lr}{\longrightarrow}


\title[Bialgebras for Rota-Baxter algebras]{
Bialgebras, Frobenius algebras and associative Yang-Baxter equations for Rota-Baxter algebras}

\author{Chengming Bai}
\address{Chern Institute of Mathematics \& LPMC, Nankai University, Tianjin 300071, China}
         \email{baicm@nankai.edu.cn}

\author{Li Guo}
\address{Department of Mathematics and Computer Science, Rutgers University, Newark, NJ 07102, USA}
         \email{liguo@rutgers.edu}

\author{Tianshui Ma}
\address{School of Mathematics and Information Science, Henan Normal University, Xinxiang 453007, China}
\email{matianshui@htu.edu.cn}

\date{\today}

\begin{abstract}
Rota-Baxter operators and bialgebras go hand in hand in their
applications, such as in the Connes-Kreimer approach to
renormalization and the operator approach to the classical
Yang-Baxter equation. We establish a bialgebra structure that is
compatible with the Rota-Baxter operator, called the Rota-Baxter
antisymmetric infinitesimal (ASI) bialgebra. This bialgebra is
characterized by generalizations of matched pairs of algebras and
double constructions of Frobenius algebras to the context of
Rota-Baxter algebras. The study of the coboundary case leads to an
enrichment of the associative Yang-Baxter equation (AYBE) to
Rota-Baxter algebras. Antisymmetric solutions of the equation are
used to construct Rota-Baxter ASI bialgebras. The notions of an
$\mathcal{O}$-operator on a Rota-Baxter algebra and a Rota-Baxter
dendriform algebra are also introduced to produce solutions of the
AYBE in Rota-Baxter algebras and thus to provide Rota-Baxter ASI
bialgebras. An unexpected byproduct is that a
Rota-Baxter \asi bialgebra of weight zero gives rise to a
quadri-bialgebra instead of bialgebra constructions for the dendriform algebra.
\end{abstract}

\subjclass[2010]{
16T10,   
16T25,   
17B38,  
16W99,  
17A30,  
16T05,  
17B62,   
57R56,   
81R60   
}


\keywords{Rota-Baxter algebra; antisymmetric infinitesimal bialgebra; associative Yang-Baxter equation; $\mathcal{O}$-operator; dendriform algebra}

\maketitle

\vspace{-1.2cm}

\tableofcontents

\vspace{-1cm}

\allowdisplaybreaks

\section{Introduction}
This paper develops a bialgebra theory for Rota-Baxter algebras by applying methods from double constructions of Frobenius algebras, associative Yang-Baxter equations, $\calo$-operators and dendriform algebras.

\subsection{Hopf algebras and Lie bialgebras}

A bialgebra structure is the coupling of an algebra structure and
a coalgebra structure by certain compatibility conditions,
sometimes called distributive laws. The most well-known such
structures are the Hopf algebra~\mcite{Ab} for associative
algebras and the Lie bialgebra~\mcite{CP,D} for Lie algebras. The
central importance of bialgebras rests on their
connection with other structures arising from mathematics and
physics. Hopf algebra has its origin in topology, and serves as universal enveloping algebras of Lie algebras and quantization in terms of
quantum groups and Yang-Baxter equation.

Lie bialgebras also arise naturally in the study of Yang-Baxter equations,
and are closely related to quantum groups as deformations of
universal enveloping algebras. Etingof and
Kazhdan~\mcite{EK} proved that every Lie bialgebra has a
corresponding quantized universal enveloping algebra, that is,
there exists a quantization for every Lie bialgebra.

Further, bialgebras naturally lead to other important algebraic
structures of independent importance. Lie bialgebras are
characterized by Manin triples of Lie algebras, while Manin
triples of Lie algebras with respect to (symmetric) nondegenerate
invariant bilinear form  give constructions of self-dual (or
quadratic) Lie algebras which are useful in conformal field theory and quantized universal algebras~\mcite{Fu,Pe}. In addition, the Manin triple approach of the Lie
bialgebra is interpreted as cocycle conditions which provide a
rich structure theory and effective constructions for the Lie
bialgebra. In particular, the study of coboundary case leads to
the introduction of the classical Yang-Baxter equation (CYBE). The
(antisymmetric) solutions of the CYBE give rise to Lie bialgebras,
whereas $\mathcal O$-operators on Lie algebras introduced in
\cite{Ku}, which are a natural generalization of Rota-Baxter
operators, provide the needed solutions of the CYBE.

Similarly, Manin triples of Lie algebras with respect to
a (skew-symmetric) nondegenerate 2-cocycle, which are called
the parak\"ahler structures in geometry or phase
spaces in mathematical physics, correspond to left-symmetric
bialgebras~\mcite{Bai3,Ka,Ku2}.

The associative analog of the Lie bialgebra is the antisymmetric
infinitesimal (\asi) bialgebra, for which the Manin triples of
associative algebras with respect to (symmetric) nondegenerate
invariant bilinear form are called double constructions of
Frobenius algebras, the latter being widely applied to areas such
as 2d topological quantum field theory and string
theory~\mcite{Ko,LP}.
Further details can be
found in~\mcite{Ag2,Bai1} for example.

\vspace{-.2cm}
\subsection{Bialgebra structures of Rota-Baxter algebras}

The study of Rota-Baxter algebras originated from the work of the mathematician G.
Baxter~\mcite{Baxter} in probability and, in its early study,
attracted the attention of well-known analysts and combinatorists including
Atkinson, Cartier and Rota~\mcite{Ro1}. In the
Lie algebra context, the defining relation of Rota-Baxter operators is precisely the operator form of the classical Yang-Baxter equation, named after C.-N.~Yang and R.~J.~Baxter. This area has expanded
tremendously in recent years with its broad
applications and connections, notably in the Connes-Kreimer approach to renormalization of quantum field theory, where Hopf algebras and Rota-Baxter algebras are the fundamental structures, especially in the algebraic Birkhoff factorization. See~\mcite{Bai2,Br,CK,Guo1,GPZ} for further details.

Given the joint roles played by bialgebras and Rota-Baxter algebras in their applications, it is desirable to study their composed structures. Some progresses have been made in this direction.
There has been quite much study equipping free Rota-Baxter algebras with a bialgebra or Hopf algebra structures~\mcite{AGKN,EG,GGZ2}, thus providing rich structures on Rota-Baxter algebras and leading to connections with combinatorics and number theory~\mcite{Ho,YGT}.
There is also a notion of Rota-Baxter bialgebra~\mcite{ML}, as a quintuple $(A, \cdot, \Delta, P, Q)$ where $(A,\cdot,\Delta)$ is a bialgebra, $(A, \cdot, P)$ is a Rota-Baxter algebra of certain weight and $(A,\Delta, Q)$ is a Rota-Baxter coalgebra of another weight.
Yet, compatibility relations between the operators $P$ and $Q$ are still needed to have a bialgebra theory for Rota-Baxter algebras.

In this paper, we establish a bialgebra theory for Rota-Baxter
algebras by extending the approach of double construction of
Frobenius algebras to \asi bialgebras, after resolving challenges
imposed by the extra restrictions of the Rota-Baxter operators.
The method is quite general and might be adapted for the other
operators such as the differential, Nijenhuis, average and
Reynalds operators.
Note that these structures are different from Hom-Lie algebras~\mcite{HLS} which also have an extra linear operator, but the operator entails a twist of the existing Jacobian identity rather than adding a new relation to the structure.

To give a thorough and uniform
treatment of the possible compatibility conditions among the multiplication, comultiplication and the linear operators, we introduce an admissibility condition between a linear operator
and a Rota-Baxter algebra. Then we are able to apply the Rota-Baxter operator to the matched pairs of algebras, Frobenius algebras, associative Yang-Baxter equation, $\calo$-operators and dendriform algebras, so that the full theory of \asi bialgebras can be extended to Rota-Baxter algebras.
\vspace{-.2cm}
\subsection{Outline of the paper}
The main notions and constructions in this paper are
summarized in the following diagram.
$$ \xymatrix{
\text{Rota-Baxter}\atop
 \text{dendriform algebras} \ar@2{->}_{\S\ref{ss:rbdend}}[d] &&\text{matched pairs of}\atop \text{Rota-Baxter algebras} &\text{\small{quadri-bialgebras}}\\
\mathcal{O}\text{-operators on}\atop\text{Rota-Baxter algebras} \ar@2{->}^{\S\ref{ss:oop}}[r]&
\text{solutions of}\atop \text{admissible AYBE}
\ar@2{->}^{\S\ref{ss:ybe}}[r]& \text{Rota-Baxter}\atop \text{\asi bialgebras} \ar@2{->}^{\S\ref{ss:dend}}[r] \ar@2{<->}_{\S\ref{ss:double}}[d] \ar@2{<->}^{\S\ref{ss:match}}[u]&
\text{Manin triples of}\atop \text{dendriform algebras} \ar@2{->}^{\S\ref{ss:dend}}[u] & \\
&& \text{double construction of}\atop \text{Rota-Baxter
Frobenius algebras} & }
$$

In Section~\mref{sec:rbasi}, we introduce the notion of a Rota-Baxter antisymmetric infinitesimal (ASI)  bialgebra together with some preliminary examples. This notion is built on Rota-Baxter operators on an ASI bialgebra, but with a special set of compatibility conditions between the unary and binary operations that is the key to a complete bialgebra theory comparable to those of Lie bialgebras and ASI bialgebras.
To gain a good understanding of these compatibility conditions for
later applications, we conceptualize these conditions in
Section~\mref{ss:rep}, to the notion of an admissible quadruple of
a Rota-Baxter algebra, as the compatibility of a Rota-Baxter
algebra and a linear operator in terms of dual representations.

With the preparation in Section~\mref{ss:rep}, in Section~\mref{sec:match}, we give the general notion of a
matched pair of Rota-Baxter algebras, before specializing to
the case when the underlying linear spaces of the two Rota-Baxter algebras are dual to each other, providing an equivalent condition for Rota-Baxter \asi bialgebras (Theorem~\mref{thm:rbinfbialg1}). Then this notion is tied in with the notion of a double
construction of Rota-Baxter Frobenius algebra
(Theorem~\mref{thm:3.7}) and thus provides another equivalent
condition of Rota-Baxter \asi bialgebras
(Theorem~\mref{thm:rbbial}).

The remaining part of the paper studies and constructs Rota-Baxter \asi bialgebras through their relationship with Rota-Baxter operations on associative Yang-Baxter equations, $\mathcal {O}$-operators and dendriform algebras.

In Section~\mref{sec:aybe}, we focus on coboundary Rota-Baxter
ASI bialgebras. From their characterization equations
(Corollary~\ref{thm:4.6a}), we extract the notion of an admissible associative Yang-Baxter equation in a Rota-Baxter
algebra, whose study further leads to the notion of an
$\mathcal{O}$-operator on a Rota-Baxter algebra. Thus such
$\calo$-operators provide the needed solutions of the
admissible associative Yang-Baxter equation to give rise to
Rota-Baxter \asi bialgebras (Theorem~\mref{thm:4.21}). Several cases are considered where the conditions and computations can be made explicit, providing examples for the general results (Proposition~\ref{cor:cons}).
In particular, a Rota-Baxter operator on any Rota-Baxter algebra is naturally an $\mathcal O$-operator on this
Rota-Baxter algebra and hence produces a Rota-Baxter \asi bialgebra (Corollary~\mref{cor:4.23}).

In view of the role played by dendriform algebras~\mcite{Lo} in
the study of \asi bialgebras~\mcite{Bai1},
especially in constructing $\mathcal O$-operators, we introduce in
Section~\mref{sec:dend} the notion of a Rota-Baxter dendriform
algebra as a dendriform algebra with a Rota-Baxter operator, which gives a natural $\mathcal O$-operator on the associated Rota-Baxter algebra
(Proposition~\mref{pro:5.3}). Hence there is a construction of
Rota-Baxter \asi bialgebras from Rota-Baxter dendriform algebras
via such $\mathcal O$-operators, applying the methods introduced in the
previous section. Especially, any Rota-Baxter algebra of
weight zero already carries a natural Rota-Baxter dendriform algebra
structure and hence produces
a Rota-Baxter \asi bialgebra (Corollary~\mref{cor:construction}).
This phenomenon, of obtaining a nontrivial bialgebra structure
directly from the base algebraic structure as displayed here and
in the above Corollary~\mref{cor:4.23}, is new in Rota-Baxter
algebras, not found in previously considered algebra structures
such as Lie algebras and associative algebras. We further found that although a Rota-Baxter algebra of weight zero gives a
dendriform algebra~\cite{Ag}, a Rota-Baxter \asi bialgebra of weight zero does not give a bialgebra construction for the dendriform algebra, but rather a quadri-bialgebra~\mcite{NB}
(Remark~\mref{rmk:4.26a} and Corollary~\mref{co:quad}).

\smallskip

\noindent

{\bf Notations: }
Throughout this paper, we fix a field $K$. All vector spaces, tensor products, and
 linear homomorphisms are over $K$.  After Section~\mref{sec:rbasi}, all the vector spaces and algebras are finite dimensional unless otherwise specified. By an algebra, we mean an associative algebra not necessarily having a unit.
\vspace{-.2cm}
\section{Rota-Baxter ASI bialgebras and their admissibility conditions}
\mlabel{sec:rbasi} \vspace{-.2cm} In this section, we first
introduce the notion of a Rota-Baxter antisymmetric infinitesimal
(\asi) bialgebra as an enrichment of \asi bialgebras, with a key
role played by representations of a Rota-Baxter algebra on a dual
space. We then give a focused study of such representations for better understanding of Rota-Baxter \asi
bialgebras and their applications. \vspace{-.2cm}
\subsection{Rota-Baxter ASI bialgebras}
\mlabel{ss:rbasi} We first recall the notion of antisymmetric
infinitesimal bialgebras~\mcite{Ag2,Bai1,JR} as the associative analog of Lie bialgebras~\mcite{CP,D}.

\begin{defi}
\mlabel{de:bial}
An {\bf antisymmetric infinitesimal bialgebra} or
simply an {\bf \asi bialgebra} is a triple $(A,\cdot,\Delta)$
consisting of a vector space $A$ and linear maps $\cdot: A\ot A\to
A$ and $\Delta:A\to A\ot A$ such that
\begin{enumerate}
\item
the pair $(A,\cdot)$ is an associative algebra,
\item
the pair $(A,\Delta)$ is a coassociative coalgebra, and \item
with the flip map $\sigma:A\ot A\to A\ot A$, the following equations hold.
 \begin{equation}
 \Delta(a\cdot b)=(R_A(b)\otimes \id)\Delta(a)+(\id\otimes L_A(a))\Delta(b),
 \mlabel{eq:3.14}
 \end{equation}
 \begin{equation}
 (L_A(a)\otimes \id-\id\otimes R_A(a))\Delta(b)=\sigma(\id\otimes R_A(b)-L_A(b)\otimes \id)\Delta(a), \quad \forall a, b\in A.
 \mlabel{eq:3.15}
 \end{equation}
\end{enumerate}
\end{defi}
The terms infinitesimal and antisymmetric come from Eqs.~(\mref{eq:3.14}) and (\mref{eq:3.15}) respectively.
When $A$ is finite dimensional, an \asi bialgebra can be equivalently formulated in terms of a matched pair of algebras and a double construction of Frobenius algebras.

We will extend the theory of \asi bialgebras to the context of Rota-Baxter algebras.
\begin{defi}
\mcite{Baxter}
Let $\lambda\in K$ be given.  A {\bf Rota-Baxter algebra of weight $\lambda$} is a pair $(A,P)$, consisting of an algebra $A$ and a linear operator $P:A\to A$ such that
\begin{equation}
 P(a)P(b)=P(a P(b))+P(P(a)b)+\lambda P(ab), \quad \forall a, b \in A.
\mlabel{eq:1.1}
\end{equation}
Such a linear operator $P$ is called a {\bf Rota-Baxter operator of weight $\lambda$} on $A$. We will suppress the weight if there is no danger of confusion.
\end{defi}

Dualizing the notion of a Rota-Baxter algebra, we have

\begin{defi} \mcite{EF,JZ,ML}
\mlabel{de:corb}
A {\bf Rota-Baxter coalgebra} of weight $\lambda\in K$ is a triple $(A,\Delta,Q)$ where $(A,\Delta)$ is a coalgebra and $Q:A\to A$ is a linear operator such that
\begin{eqnarray}\mlabel{eq:corb}
 (Q\ot Q)\Delta(a)=(Q\ot \id)\Delta Q(a)+(\id\ot Q)\Delta Q(a)+\lambda \Delta Q(a), \quad \forall a\in A.
 \end{eqnarray}
\end{defi}

Any algebra or coalgebra naturally comes with Rota-Baxter
operators given by the scalar multiplications. Thus Rota-Baxter
algebras and coalgebras can be regarded as generalizations of
algebras and coalgebras. Generalizing the well-known duality
between a coalgebra and an algebra~\mcite{Ab}, for a vector space
$A$ and linear maps $Q:A\to A, \Delta:A\to  A\ot A$, if
$(A,\Delta,Q)$ is a Rota-Baxter coalgebra, then
$(A^*,\Delta^*,Q^*)$ is a Rota-Baxter algebra. When $A$ is finite
dimensional, the converse is also true~\mcite{JZ}.

We now introduce the main notion of the paper as an enrichment and generalization of \asi bialgebra in the context of Rota-Baxter algebras.

\begin{defi}
A {\bf Rota-Baxter antisymmetric infinitesimal (\asi) bialgebra} is a quintuple $(A,\cdot,\Delta,P,Q)$ or simply $((A,P),\Delta,Q)$ consisting of a vector space $A$ and linear maps
$$\cdot: A\ot A\to A,\ \ \Delta:A\to A\ot A,\ \  P,Q:A\to A$$
such that
\begin{enumerate}
\item
$(A,\cdot,\Delta)$ is an antisymmetric infinitesimal bialgebra,
\mlabel{it:rbasi1}
\item
$(A,\cdot, P)$ is a Rota-Baxter algebra,
\mlabel{it:rbasi2}
\item
$(A,\Delta,Q)$ is a Rota-Baxter coalgebra, and
\mlabel{it:rbasi3}
\item
the following compatibility conditions hold.
\begin{eqnarray}
 Q(aP(b))&=&Q(a)P(b)+Q(Q(a)b)+\lambda Q(a)b,
 \mlabel{eq:pduqr} \\
 Q(P(a)b)&=&P(a)Q(b)+Q(aQ(b))+\lambda aQ(b), \quad \forall a, b\in A,
 \mlabel{eq:pduql}\\
 (\id\ot Q)\Delta P&=&(P\ot Q)\Delta+(P\ot \id)\Delta P+\lambda(P\ot \id)\Delta,
\mlabel{eq:pduqrd}\\
(Q\ot \id)\Delta P&=&(Q\ot P)\Delta+(\id\ot P)\Delta P+\lambda(\id\ot P)\Delta.
\mlabel{eq:pduqld}
 \end{eqnarray}
\end{enumerate}
\mlabel{de:rbbial}
\end{defi}

We give some preliminary examples. More substantial examples will be provided later after the needed tools are developed.

\begin{ex}
As noted above, for a given scalar $\lambda$ in the base field, the scalar product by $-\lambda$ is a Rota-Baxter operator of weight $\lambda$. Thus for any algebra $R$, $(R,-\lambda\id)$ is a Rota-Baxter algebra of weight $\lambda$. Similarly, for any coalgebra $C$, the pair $(C,-\lambda\id)$ is a Rota-Baxter coalgebra of weight $\lambda$. It then follows that for
 any \asi bialgebra $(R,\cdot,\Delta)$, together with the scalar product operators $P=Q=-\lambda \id$, the quintuple $(A,\cdot,\Delta,P,Q)$ is a Rota-Baxter \asi bialgebra of weight $\lambda$.
\mlabel{ex:scalar}
\end{ex}

\begin{ex}\label{ex:directsum}
A basic example of Rota-Baxter algebras of weight $-1$ is the
direct sum $A\oplus B$ of two algebras with the projection
operator to either of the two summands. The same construction
gives Rota-Baxter coalgebras of weight $-1$. Now let
$(A,\cdot_A,\Delta_A)$ and $(B,\cdot_B,\Delta_B)$ be \asi
bialgebras. It is direct to check that the direct sum $A\oplus B$
with the component operations $\cdot_A+ \cdot_B$ and $\Delta_A+
\Delta_B$ is again an \asi bialgebra. Let $P_A, P_B:A\oplus B\to
A\oplus B$ be the projections to $A$ and $B$ respectively. Then the quintuple $(A\oplus B, \cdot_A+ \cdot_B,\Delta_A+
\Delta_B,P_A,P_B)$ is a Rota-Baxter \asi bialgebra. For this it
remains to check the compatibility conditions in
Eqs.~\eqref{eq:pduqr}-\eqref{eq:pduqld}. For 
$x\in A, a\in B$, we have
\begin{eqnarray*}
&&(\id_{A\oplus B}\ot P_B)(\Delta_A+ \Delta_B) P_A(x+a)=
(P_A\ot P_B)(\Delta_A+ \Delta_B)(x+a)=0,\\
&&(P_A\ot \id_{A\oplus B})(\Delta_A+\Delta_B) P_A(x+a)=
(P_A\ot \id_{A\oplus B})(\Delta_A+
\Delta_B)(x+a)=\Delta_A(x).
\end{eqnarray*}
Hence
\begin{eqnarray*}
(\id_{A\oplus B}\ot P_B)(\Delta_A+ \Delta_B) P_A&=& (P_A\ot
P_B)(\Delta_A+ \Delta_B)+ (P_A\ot \id_{A\oplus
B})(\Delta_A+\Delta_B) P_A\\&\mbox{}&- (P_A\ot \id_{A\oplus
B})(\Delta_A+ \Delta_B), \end{eqnarray*}
that is,
Eq.~\meqref{eq:pduqrd} holds. The other equations can be verified in the same way.
\end{ex}

\begin{rmk}
As we can see in Definition~\mref{de:rbbial}, the key to the notion of a Rota-Baxter \asi algebra
is a suitable choice of compatibility conditions among the
multiplication, comultiplication and the linear operators. Our
choice of the conditions in
Eqs.~\meqref{eq:pduqr}-\meqref{eq:pduqld} was motivated by our goal of developing a full theory expanding those of Lie bialgebras (resp. \asi bialgebras), including their close connections with matched pairs, Manin triples (resp. double constructions of Frobenius algebras) and furthermore with classical (resp. associative) Yang-Baxter equations, $\calo$-operators and pre-Lie (resp. dendriform) algebras. These connections and
applications of Rota-Baxter \asi algebras will be presented in the later sections of the paper. For this purpose, we first need a conceptual understanding of these compatibility conditions which we will explore in the next subsection.
\mlabel{rk:comp}
\end{rmk}

\subsection{Rephrasing Rota-Baxter \asi bialgebras in terms of
    admissible quadruples}
\mlabel{ss:rep}
We will work with the representation theory of an algebra in the sense of bimodules, instead of one-sided modules.

\begin{defi}
Let $A$ be an algebra. A {\bf representation of $A$} or an {\bf $A$-bimodule} is a triple $(V,\ell,r)$, abbreviated as $V$, consisting of a vector space $V$ and linear maps
$$\ell, r: A\to  \End_K(V)$$
such that
$$
\ell(a)(\ell(b)v)=\ell(ab)v,\
(vr(a))r(b)=vr(ab),\
 (\ell(a)v)r(b)=\ell(a)(vr(b)), \  \forall a, b\in A, v\in V.
$$
\end{defi}
With $L=L_A$ and $R=R_A$ denoting the left and right multiplications by elements of $A$ respectively, the triple $(A, L, R)$ is a representation of $A$, called the {\bf adjoint representation} of $A$.

Let $\ell, r: A\to \End_K(V)$ be linear maps.
Define a multiplication on $A\oplus V$ by
 \begin{equation}
 (a+u)\cdot (b+v):=ab+(\ell(a)v+u r(b)),\quad \forall a, b\in A, u, v \in V.
 \mlabel{eq:2.3}
 \end{equation}
Then as is well known, $A\oplus V$ is an
algebra, denoted by $A\ltimes_{\ell,r} V$ and called
the {\bf semi-direct product} of $A$ by $V$, if and only if $(V,\ell,r)$ is a
representation of $A$.

Extending the concept of a representation of an algebra to the context of Rota-Baxter algebras, we
give

\begin{defi}
A {\bf representation}  or a {\bf bimodule} of a Rota-Baxter
algebra $(A, P)$ of weight $\lambda$ is a quadruple $(V, \ell, r,
\alpha)$ where $(V, \ell, r)$ is an $A$-bimodule and $\alpha$ is a
linear operator on $V$ such that
 \begin{equation}
 \ell(P(a))\alpha(v)=\alpha(\ell(P(a))v)+\alpha(\ell(a)\alpha(v))+\lambda \alpha(\ell(a)v))
\mlabel{eq:2.1}
 \end{equation}
 and
 \begin{equation}
 \alpha(v)r(P(a))=\alpha(\alpha(v)r(a))+\alpha(v r(P(a)))+\lambda \alpha(v r(a)), \quad \forall a\in A, v\in V.
\mlabel{eq:2.2}
 \end{equation}
Two representations $(V_1, \ell_1, r_1, \alpha_1)$ and $(V_2,
\ell_2, r_2, \alpha_2)$ of a Rota-Baxter algebra $(A, P)$ are
called {\bf equivalent} if there exists a linear
isomorphism $\varphi:V_1\rightarrow V_2$ such that
 \begin{equation} \mlabel{de:2.1}
 \varphi (\ell_1(a)(v))=\ell_2(a)\varphi(v), \varphi(vr_1(a))=\varphi(v)r_2(a), \varphi \alpha_1 (v)=\alpha_2\varphi(v) ,\;\;\forall a\in A, v\in V_1.
 \end{equation}
 \end{defi}
It follows immediately from the definition that the quadruple $(A,L,R,P)$ is a representation of the Rota-Baxter algebra $(A,P)$, called the {\bf adjoint representation} of $(A,P)$.

For representations of Rota-Baxter algebras in the sense of one-sided modules, we refer the reader to~\mcite{QGG,QP}.
Semi-direct products  of Rota-Baxter algebras can also be characterized by their representations.

\begin{pro}
Let $(A,P)$ be a Rota-Baxter algebra of weight $\lambda$.  Let
$(V, \ell, r)$ be a representation of the algebra $A$ and let
$\alpha$ be a linear operator on $V$. Define a linear map
 \begin{equation}
 P_{A\oplus V}: A\oplus V\to  A\oplus V, \quad  P_{A\oplus V}(a+u):=P(a)+\alpha(u),
\mlabel{eq:2.4}
 \end{equation}
which is simply denoted by $P_{A\oplus V}:=P+\alpha$.
 Then together with the multiplication defined in Eq.~$($\mref{eq:2.3}$)$, $(A\oplus V, P_{A\oplus V})$ is a Rota-Baxter algebra of weight $\lambda$ if and only if $(V,\ell,r,\alpha)$ is a representation of $(A,P)$.
 The resulting Rota-Baxter algebra is denoted by $(A\ltimes_{\ell,r} V, P+\alpha)$ and is called the {\bf semi-direct product} of $(A,P)$ by its representation $(V,\ell,r,\alpha)$.
\mlabel{pro:2.2}
 \end{pro}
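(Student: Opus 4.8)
The plan is to verify directly that the linear operator $P_{A\oplus V} = P + \alpha$ satisfies the Rota-Baxter identity of weight $\lambda$ on the semi-direct product algebra $A \ltimes_{\ell,r} V$ precisely when Eqs.~\eqref{eq:2.1} and \eqref{eq:2.2} hold. Since Proposition~\ref{pro:2.2} (the classical, non-Rota-Baxter version) already tells us that $A \oplus V$ with the multiplication \eqref{eq:2.3} is an associative algebra iff $(V,\ell,r)$ is an $A$-bimodule --- and this hypothesis is assumed throughout --- the only thing to check is the operator identity \eqref{eq:1.1} for $P_{A\oplus V}$. So the entire proof reduces to expanding both sides of
$$P_{A\oplus V}(x)\cdot P_{A\oplus V}(y) = P_{A\oplus V}(x\cdot P_{A\oplus V}(y)) + P_{A\oplus V}(P_{A\oplus V}(x)\cdot y) + \lambda P_{A\oplus V}(x\cdot y)$$
for $x = a+u$, $y = b+v$ with $a,b\in A$, $u,v\in V$, and comparing the $A$-component and the $V$-component separately.

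First I would compute the $A$-component of both sides. On the left, $P_{A\oplus V}(a+u)\cdot P_{A\oplus V}(b+v) = (P(a)+\alpha(u))\cdot(P(b)+\alpha(v))$, whose $A$-part is $P(a)P(b)$ by \eqref{eq:2.3}. On the right, each of the three terms contributes $P$ applied to an $A$-component, giving $P(aP(b)) + P(P(a)b) + \lambda P(ab)$. Thus the $A$-component identity is exactly the Rota-Baxter identity \eqref{eq:1.1} for $(A,P)$, which holds by hypothesis; this component imposes no new condition. Next I would compute the $V$-component. Using \eqref{eq:2.3}, the left side's $V$-part is $\ell(P(a))\alpha(v) + \alpha(u)r(P(b))$. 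For the right side, I expand $x\cdot P_{A\oplus V}(y) = (a+u)\cdot(P(b)+\alpha(v))$, whose $V$-part is $\ell(a)\alpha(v) + u\,r(P(b))$, so applying $P_{A\oplus V}$ gives $\alpha(\ell(a)\alpha(v)) + \alpha(u\,r(P(b)))$ in the $V$-slot; similarly the second term contributes $\alpha(\ell(P(a))v) + \alpha(u\,r(b))$ after noting the $V$-part of $P_{A\oplus V}(x)\cdot y$ is $\ell(P(a))v + \alpha(u)r(b)$ --- wait, one must be careful: the $V$-part of $(P(a)+\alpha(u))\cdot(b+v)$ is $\ell(P(a))v + \alpha(u)r(b)$, and applying $\alpha$ gives $\alpha(\ell(P(a))v) + \alpha(\alpha(u)r(b))$; and the third term gives $\lambda(\alpha(\ell(a)v) + \alpha(u\,r(b)))$.

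Collecting the $V$-component equation and separating the terms involving only $v$ (with fixed $a$, setting $u=0$) from those involving only $u$ (with fixed $b$, setting $v=0$), I obtain respectively
$$\ell(P(a))\alpha(v) = \alpha(\ell(a)\alpha(v)) + \alpha(\ell(P(a))v) + \lambda\alpha(\ell(a)v),$$
which is Eq.~\eqref{eq:2.1}, and
$$\alpha(u)r(P(b)) = \alpha(u\,r(P(b))) + \alpha(\alpha(u)r(b)) + \lambda\alpha(u\,r(b)),$$
which is Eq.~\eqref{eq:2.2}. Conversely, if \eqref{eq:2.1} and \eqref{eq:2.2} both hold, adding them back (for general $u$ and $v$) recovers the full $V$-component identity, and together with the $A$-component identity this gives the Rota-Baxter relation for $P_{A\oplus V}$. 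I do not anticipate a genuine obstacle here: the argument is a bookkeeping computation, and the only mild subtlety is keeping track of which of $P$, $\alpha$, $\ell$, $r$ acts on which slot, and using the linearity/bilinearity to decouple the $u$-only and $v$-only parts --- the cross terms between $u$ and $v$ never appear because the multiplication \eqref{eq:2.3} has no $V\otimes V$ contribution. The naming of the resulting object as the semi-direct product $(A\ltimes_{\ell,r}V, P+\alpha)$ is then just a definition.
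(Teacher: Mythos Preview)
Your proof is correct and follows exactly the computational strategy the paper uses: the paper omits the proof here and refers to Theorem~\mref{thm:3.1} (matched pairs of Rota-Baxter algebras), whose proof expands both sides of the Rota-Baxter identity for $P_{A\oplus B}$ and compares terms, recovering your Proposition by taking $B=V$ with the zero multiplication. Your direct expansion, separation into $A$- and $V$-components, and decoupling via $u=0$ and $v=0$ is precisely the specialization of that argument.
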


The proof will be omitted since this result is a special case of
the matched pairs of Rota-Baxter algebras in
Theorem~\mref{thm:3.1}, when $B=V$ is equipped with the zero
multiplication.

We fix more notations. Denote the usual pairing between the dual space $V^*$ and $V$ by
 $$
 \langle\, ,\, \rangle : V^*\times V\to  K, ~~\langle v^*, v \rangle :=v^*(v), \quad\forall v\in V, v^*\in V^*.
 $$
 For a linear map $\varphi: V\to  W$, we denote the transpose map by $\varphi^*: W^*\to  V^*$
 given by
 $$
 \langle \varphi^*(w^*), v \rangle =\langle w^*, \varphi(v) \rangle , \quad \forall v\in V, w^*\in W^*.
 $$

For a representation $(V,\ell, r)$ of an algebra $A$, define the triple $(V^*, r^*, \ell^*)$ where the linear maps $\ell^*, r^*: A\to  \End_K(V^*)$ are defined by
 \begin{equation}
 \langle v^* \ell^*(a), v \rangle =\langle v^*, \ell(a) v \rangle ,~~\langle r^*(a) v^*, v \rangle =\langle v^*, v r(a) \rangle, \ \
   \forall a\in A, v^*\in V^*, v\in V.
\mlabel{eq:2.5} \end{equation} Then the triple $(V^*, r^*,
\ell^*)$ is again a representation of $A$~\cite[Lemma
2.1.2]{Bai1}, called the {\bf dual representation} of $(V, \ell,
r)$. However, this property does not hold for representations of
Rota-Baxter algebras, that is, the linear dual of a
representation of a Rota-Baxter algebra is not necessarily a Rota-Baxter representation. In fact, the following extra conditions are needed.

\begin{lem}
\mlabel{it:2.3a}
\mlabel{lem:admrep}
Let $(A, P)$ be a Rota-Baxter algebra of weight $\lambda$.  Let $(V, \ell, r)$ be a representation of the algebra $A$ and let $\beta:V\to V$ be a linear map.
The quadruple $(V^*,r^*,\ell^*,\beta^*)$ is a representation of $(A,P)$ if and only if the linear operator $\beta$ satisfies
\begin{eqnarray}
& \beta(vr(P(a)))-\beta(v)r(P(a))-\beta(\beta(v)r(a))-\lambda\beta(v)r(a)=0,& \mlabel{eq:it:2.3a} \\
& \beta(\ell(P(a))v)-\ell(P(a))\beta(v)-\beta(\ell(a)\beta(v))-\lambda \ell(a)\beta(v)=0,& \forall a\in A, v\in V.
\mlabel{eq:it:2.3b}
\end{eqnarray}
\end{lem}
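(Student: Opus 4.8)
The plan is to unwind the definition of a representation of a Rota-Baxter algebra for the candidate quadruple $(V^*, r^*, \ell^*, \beta^*)$ and transpose all the relations back to $V$. First, recall from \cite[Lemma 2.1.2]{Bai1} that $(V^*, r^*, \ell^*)$ is already a representation of the underlying algebra $A$; hence the only thing left to check is that the operator $\beta^*$ satisfies the two Rota-Baxter compatibility conditions \eqref{eq:2.1} and \eqref{eq:2.2} relative to this bimodule. Observe that in the dual representation the left action is played by $r^*$ and the right action by $\ell^*$, so that \eqref{eq:2.1}, written out for $(V^*, r^*, \ell^*, \beta^*)$, becomes
$$ r^*(P(a))\beta^*(v^*) = \beta^*\big(r^*(P(a))v^*\big) + \beta^*\big(r^*(a)\beta^*(v^*)\big) + \lambda\,\beta^*\big(r^*(a)v^*\big), $$
and \eqref{eq:2.2} becomes
$$ \beta^*(v^*)\ell^*(P(a)) = \beta^*\big(\beta^*(v^*)\ell^*(a)\big) + \beta^*\big(v^*\ell^*(P(a))\big) + \lambda\,\beta^*\big(v^*\ell^*(a)\big). $$

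The next step is to pair both identities with an arbitrary element $v \in V$ and transport everything to $V$ using $\langle v^*\ell^*(a), v\rangle = \langle v^*, \ell(a)v\rangle$ and $\langle r^*(a)v^*, v\rangle = \langle v^*, vr(a)\rangle$ from \eqref{eq:2.5}, together with $\langle \beta^*(v^*), v\rangle = \langle v^*, \beta(v)\rangle$ and the rule $(\varphi\psi)^* = \psi^*\varphi^*$ for composites. For the first identity this rewrites the left side as $\langle v^*, \beta(vr(P(a)))\rangle$ and the three terms on the right as $\langle v^*, \beta(v)r(P(a))\rangle$, $\langle v^*, \beta(\beta(v)r(a))\rangle$ and $\lambda\langle v^*, \beta(v)r(a)\rangle$ respectively; since this must hold for every $v^* \in V^*$ and the canonical pairing separates the points of $V$, the identity is equivalent to Eq.~\eqref{eq:it:2.3a}. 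An entirely parallel computation shows the second identity is equivalent to Eq.~\eqref{eq:it:2.3b}. Because each transposition step is reversible, this simultaneously establishes both directions of the ``if and only if''.

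The argument is essentially a bookkeeping exercise and presents no genuine obstacle; the one point requiring care is the swap of the roles of $\ell$ and $r$ under dualization, and the correct transposition of the composite operators such as $\beta^*\, r^*(a)\,\beta^*$. In the write-up I would carry out the first of the two cases in full detail and note that the second is completely analogous.
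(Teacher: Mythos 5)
Your proposal is correct and follows essentially the same route as the paper: reduce to checking Eqs.~\eqref{eq:2.1}--\eqref{eq:2.2} for $\beta^*$ (since $(V^*,r^*,\ell^*)$ is already an $A$-bimodule), then transpose each term through the pairing \eqref{eq:2.5} and use nondegeneracy of the canonical pairing to obtain the equivalence with Eqs.~\eqref{eq:it:2.3a}--\eqref{eq:it:2.3b}. You also correctly handle the one delicate point (the interchange of $\ell$ and $r$ under dualization), which the paper's own displayed computation glosses over notationally.
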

\vspace{-.5cm}
\begin{proof}
Since $(V^*, r^*, \ell^*)$ is an $A$-bimodule, we just need to determine when $\beta^*$ satisfies Eqs.~(\mref{eq:2.1}) and (\mref{eq:2.2}) where $\alpha$ is replaced by $\beta^*$. 
By Eq.~(\mref{eq:2.5}), we have
\vspace{-.2cm}
\begin{eqnarray*}
&&\langle \ell(P(a))\beta^*(v^*)-\beta^*(\ell(P(a))v^*)-\beta^*(\ell(a)\beta^*(v^*))-\lambda \beta^*(\ell(a)v^*)), v \rangle \\
&=& \langle  v^*, \beta(vr(P(a)))-\beta(v)r(P(a))-\beta(\beta(v)r(a))-\lambda\beta(v)r(a)\rangle,
\ \forall a\in A, v\in V, v^*\in V^*.
\end{eqnarray*}
Thus Eq.~(\mref{eq:2.1}) is equivalent to
Eq.~(\mref{eq:it:2.3a}). The same argument proves
Eq.~(\mref{eq:it:2.3b}).
\end{proof}
\vspace{-.2cm}
We introduce a notion to conceptualize this key property.
\begin{defi}
    \mlabel{de:admop}
    Use the same data $(A,P), (V,\ell,r)$ and $\beta$ as in Lemma~\mref{lem:admrep}. If any (and hence both) of the equivalent conditions is satisfied, we say that $\beta$ is {\bf \admt the Rota-Baxter algebra $(A,P)$ on $(V,\ell,r)$}. To
    allow more flexibility in applying this notion, we also say that $(A,P)$ is
    {\bf $\beta$-admissible} on $(V,\ell,r)$ or that the quadruple
    $(V,\ell,r,\beta)$ is {\bf admissible}.
    When $(V,\ell,r)$ is taken to be the adjoint representation $(A,L,R)$ of the algebra $A$, we say that {\bf $\beta$ is \admt $(A,P)$} or simply {\bf $(A,P)$ is $\beta$-admissible}.
\end{defi}
\vspace{-.3cm}
Then we have
\begin{cor}
    Let $(A, P)$ be a Rota-Baxter algebra of weight $\lambda$. A linear operator
    $Q$ on $A$ is \admt $(A,P)$ if and
    only Eqs.~\eqref{eq:pduqr}-\meqref{eq:pduql} hold.
    \mlabel{cor:pqadmin}
\end{cor}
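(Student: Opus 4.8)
The plan is to specialize Lemma~\mref{lem:admrep} to the adjoint representation and then simply unwind the notation. Recall that for the adjoint representation $(A,L,R)$ we have $\ell(a)=L(a)=L_A(a)$, the left multiplication by $a$, and $r(a)=R(a)=R_A(a)$, the right multiplication by $a$; that is, $\ell(a)v = av$ and $vr(a)=va$ for $v\in A$. By Definition~\mref{de:admop}, saying that $Q$ is \admt $(A,P)$ means precisely that the quadruple $(A,L,R,Q)$ is admissible, i.e.\ that $(A^*,R^*,L^*,Q^*)$ is a representation of $(A,P)$, and by Lemma~\mref{lem:admrep} this is equivalent to Eqs.~\meqref{eq:it:2.3a} and \meqref{eq:it:2.3b} with $(V,\ell,r,\beta)=(A,L,R,Q)$.

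So the first step is to rewrite Eq.~\meqref{eq:it:2.3a} for this choice of data. Substituting $\ell(a)v=av$ and $vr(a)=va$, the left-hand side becomes
\[
Q((vP(a)) - Q(v)P(a) - Q(Q(v)a) - \lambda Q(v)a,
\]
wait — more carefully, $vr(P(a)) = vP(a)$, so Eq.~\meqref{eq:it:2.3a} reads
\[
Q(vP(a)) - Q(v)P(a) - Q(Q(v)a) - \lambda Q(v)a = 0,
\]
which, after relabeling $v$ as $a$ and $a$ as $b$, is exactly Eq.~\meqref{eq:pduqr}. Similarly, Eq.~\meqref{eq:it:2.3b} becomes
\[
Q(P(a)v) - P(a)Q(v) - Q(aQ(v)) - \lambda aQ(v) = 0,
\]
which upon the same relabeling is exactly Eq.~\meqref{eq:pduql}. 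Thus the two conditions of Lemma~\mref{lem:admrep} translate verbatim into Eqs.~\meqref{eq:pduqr}--\meqref{eq:pduql}, and the corollary follows.

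There is no real obstacle here: the proof is a direct invocation of Lemma~\mref{lem:admrep} together with Definition~\mref{de:admop}, and the only thing to be careful about is bookkeeping of variable names and the convention $\ell(a)v=av$, $vr(a)=va$ for the adjoint representation. One should double-check that both equivalent conditions in Lemma~\mref{lem:admrep} are being used — the statement of the corollary implicitly uses that the pair of Eqs.~\meqref{eq:it:2.3a}--\meqref{eq:it:2.3b} is a single admissibility condition (both must hold), matching the pair Eqs.~\meqref{eq:pduqr}--\meqref{eq:pduql}. Hence the proof amounts to writing ``This is Lemma~\mref{lem:admrep} applied to the adjoint representation $(A,L,R)$, upon noting that $\ell(a)v=av$ and $vr(a)=va$.''
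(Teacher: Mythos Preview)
Your proposal is correct and matches the paper's approach: the paper gives no explicit proof for this corollary, treating it as immediate from Definition~\mref{de:admop} and Lemma~\mref{lem:admrep} specialized to the adjoint representation $(A,L,R)$, which is exactly what you spell out.
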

\vspace{-.2cm}
Further the definition of a Rota-Baxter \asi bialgebra can be rephrased as
\vspace{-.2cm}
\begin{pro}
    \mlabel{pro:asi2} A quintuple $(A,\cdot,\Delta,P,Q)$ with notations in Definition~\mref{de:rbbial} is a Rota-Baxter \asi bialgebra if and only if it satisfies conditions \eqref{it:rbasi1}-\eqref{it:rbasi3} in Definition~\mref{de:rbbial} and that
    $Q$ and $P^*$ are admissible to the Rota-Baxter algebras $(A,P)$ and $(A^*,Q^*)$ respectively.
\end{pro}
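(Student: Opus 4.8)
The plan is to notice that conditions \meqref{it:rbasi1}--\meqref{it:rbasi3} of Definition~\mref{de:rbbial} occur verbatim on both sides of the asserted equivalence, so it suffices to prove, under these three hypotheses, that item~(d) of Definition~\mref{de:rbbial}---that is, Eqs.~\meqref{eq:pduqr}--\meqref{eq:pduqld}---is equivalent to the conjunction ``$Q$ is admissible to $(A,P)$'' and ``$P^*$ is admissible to $(A^*,Q^*)$''. I would handle the two pairs of equations separately. The first pair, Eqs.~\meqref{eq:pduqr}--\meqref{eq:pduql}, is \emph{by definition} the assertion that $Q$ is admissible to $(A,P)$: this is exactly Corollary~\mref{cor:pqadmin} applied to the Rota-Baxter algebra $(A,P)$ with the operator $Q$, so nothing further is needed here.

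For the second pair I would first record that, by the coassociativity contained in \meqref{it:rbasi1} and by condition \meqref{it:rbasi3}, the dual $(A^*,\Delta^*,Q^*)$ is a Rota-Baxter algebra of weight $\lambda$ (the duality recalled after Definition~\mref{de:corb}). Hence Corollary~\mref{cor:pqadmin} applies to $(A^*,Q^*)$ with the operator $P^*$: it is admissible to $(A^*,Q^*)$ if and only if the transports of Eqs.~\meqref{eq:pduqr}--\meqref{eq:pduql} under $A\mapsto A^*$, $P\mapsto Q^*$, $Q\mapsto P^*$ hold, that is,
\begin{align*}
 P^*(\xi\cdot_{A^*}Q^*(\eta)) &= P^*(\xi)\cdot_{A^*}Q^*(\eta)+P^*(P^*(\xi)\cdot_{A^*}\eta)+\lambda\, P^*(\xi)\cdot_{A^*}\eta,\\
 P^*(Q^*(\xi)\cdot_{A^*}\eta) &= Q^*(\xi)\cdot_{A^*}P^*(\eta)+P^*(\xi\cdot_{A^*}P^*(\eta))+\lambda\, \xi\cdot_{A^*}P^*(\eta)
\end{align*}
for all $\xi,\eta\in A^*$, where $\xi\cdot_{A^*}\eta:=\Delta^*(\xi\ot\eta)$. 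Then I would pair each of these identities with an arbitrary $a\in A$ and unwind everything through the pairing, using $\langle\Delta^*(\xi\ot\eta),a\rangle=\langle\xi\ot\eta,\Delta(a)\rangle$ together with $\langle Q^*\xi,a\rangle=\langle\xi,Q(a)\rangle$ and $\langle P^*\xi,a\rangle=\langle\xi,P(a)\rangle$ (Eq.~\meqref{eq:2.5}); a short computation turns the first displayed identity into Eq.~\meqref{eq:pduqrd} and the second into Eq.~\meqref{eq:pduqld}. Since $A$ is finite dimensional, the pairing between $A^*\ot A^*$ and $A\ot A$ is nondegenerate, so these translations are genuine equivalences, and assembling the two pairs completes the proof.

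I do not expect a real obstacle: the argument is bookkeeping with transposes. The only place demanding care is keeping the two tensor factors straight---checking, for instance, that the ``left'' admissibility identity for $P^*$ on $A^*$ dualizes to Eq.~\meqref{eq:pduqrd}, with $Q$ in the \emph{second} slot and $P$ in the first, rather than to Eq.~\meqref{eq:pduqld}, and that the weight-$\lambda$ terms land in the matching positions. Finite-dimensionality (in force after Section~\mref{sec:rbasi}) is what lets us identify $(A^*)^*$ with $A$ and $(\Delta^*)^*$ with $\Delta$, and hence makes ``admissible to $(A^*,Q^*)$'' a condition that can be read back on $A$ at all.
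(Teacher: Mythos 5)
Your proposal is correct and takes essentially the same route as the paper's proof: the first pair of compatibility conditions is identified with the admissibility of $Q$ to $(A,P)$ via Corollary~\ref{cor:pqadmin}, and Eqs.~\eqref{eq:pduqrd}--\eqref{eq:pduqld} are rewritten (by dualizing through the pairing) as the admissibility of $P^*$ to $(A^*,Q^*)$. The paper merely states this rewriting without the explicit pairing computation and finite-dimensionality remark that you supply, so your write-up is the same argument with the bookkeeping made explicit.
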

\vspace{-.3cm}
\begin{proof}
Of the compatibility conditions in
Eqs.~\meqref{eq:pduqr}-\meqref{eq:pduqld} for a Rota-Baxter \asi
bialgebras, Corollary~\mref{cor:pqadmin} already equals Eqs.~\meqref{eq:pduqr} and
\meqref{eq:pduql} to the admissibility of $Q$ to the
Rota-Baxter algebra $(A,P)$.
Further, Eqs.~\meqref{eq:pduqrd} and \meqref{eq:pduqld} can be
rewritten as
\vspace{-.2cm}
\begin{eqnarray}
P^*(uQ^*(v))&=&P^*(u)Q^*(v)+P^*(P^*(u)v)+\lambda P^*(u)v,
\mlabel{eq:pduqrddual}\\
P^*(Q^*(u)v)&=&Q^*(u)P^*(v)+P^*(uP^*(v))+\lambda uP^*(v), \quad \forall u, v\in A^*.
\mlabel{eq:pduqlddual}
\end{eqnarray}
This means that $P^*$ is admissible to the Rota-Baxter algebra $(A^*,Q^*)$.
\end{proof}
\section{Matched pairs of Rota-Baxter algebras, double constructions of Rota-Baxter Frobenius  algebras and Rota-Baxter ASI bialgebras}
\mlabel{sec:match}
In this section, we introduce the notions of a matched pair of
Rota-Baxter algebras and a double construction of
Rota-Baxter Frobenius algebra.
Generalizing the characterizations of Lie bialgebras and
ASI bialgebras in terms of Manin triples
for Lie algebras or double constructions of Frobenius algebras for
associative algebras~\mcite{Ag2,Bai1,CP}, we prove that these new notions give equivalent conditions for a Rota-Baxter \asi
bialgebra.
\subsection{Matched pairs of Rota-Baxter algebras}
\mlabel{ss:match} We first recall the concept of a matched pair of algebras~\mcite{Bai1}.
\vspace{-.1cm}
\begin{defi}
A {\bf matched pair of algebras} consists of
algebras $(A, \opa)$ and $(B, \opb)$, together with
linear maps $\ell_A, r_A: A\to  \End_K(B)$ and $\ell_B,
r_B: B\to  \End_K(A)$ such that
\begin{enumerate}
\item
$(A, \ell_B, r_B)$ is a representation of $(B, \opb)$, \item
$(B, \ell_A, r_A)$ is a representation of $(A, \opa)$ and
\item
 the following compatibility conditions hold: for $a, a'\in A$ and $b, b'\in B$,
 \begin{eqnarray}
& \ell_A(a)(b\opb b')=\ell_A(a r_B(b))b'+(\ell_A(a)b)\opb b';&
 \mlabel{eq:3.1}\\
& (b\opb b')r_A(a)=b~r_A(\ell_B(b')a)+b\opb (b' r_A(a));&
 \mlabel{eq:3.2}\\
& \ell_B(b)(a\opa a')=\ell_B(b~r_A(a))a'+(\ell_B(b)a)\opa a';&
 \mlabel{eq:3.3}\\
& (a\opa a')r_B(b)=a~r_B(\ell_A(a')b)+a\opa (a'r_B(b));&
 \mlabel{eq:3.4}\\
& \ell_A(\ell_B(b)a)b'+(b~r_A(a))\opb b'=b~r_A(a~r_B(b'))+b\opb (\ell_A(a)b');&
 \mlabel{eq:3.5}\\
& \ell_B(\ell_A(a)b)a'+(a~r_B(b))\opa a'=a~r_B(b~r_A(a'))+a\opa (\ell_B(b)a').&
 \mlabel{eq:3.6}
\end{eqnarray}
\end{enumerate}
\mlabel{de:match}
\end{defi}
\vspace{-.2cm}
There is a characterization of \asi bialgebras by matched pairs of algebras.
\vspace{-.1cm}
\begin{thm} \cite{Bai1}\mlabel{thm:md}
Let $(A, \cdot)$ be an algebra. Suppose that there is an algebra
$(A^*, \circ)$ on the linear dual $A^*$. Let $\Delta:A\to
A\ot A$ be the linear dual of $\circ:A^*\ot A^*\to A^*$. Then $(A,\cdot, \Delta)$ is an \asi bialgebra if and only if
$(A,A^*, {R_\cdot}^*, {L_\cdot}^*, {R_\circ}^*,
{L_\circ}^*)$ is a matched pair of algebras.
\end{thm}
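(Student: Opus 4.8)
The plan is to peel off the conditions that hold automatically on each side and then match the two remaining systems of equations one block at a time through the canonical pairing.

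First, the representation conditions built into both definitions are free. Since $(A,\cdot)$ and $(A^*,\circ)$ are associative, $(A,L_\cdot,R_\cdot)$ and $(A^*,L_\circ,R_\circ)$ are their adjoint representations, so by the dual representation construction recalled in Eq.~\eqref{eq:2.5} (cf.~\cite[Lemma~2.1.2]{Bai1}) the triple $(A^*,{R_\cdot}^*,{L_\cdot}^*)$ is a representation of $(A,\cdot)$ and $(A,{R_\circ}^*,{L_\circ}^*)$ is a representation of $(A^*,\circ)$. Thus, for the matched pair data $(A,A^*,\ell_A,r_A,\ell_B,r_B)$ with $\ell_A={R_\cdot}^*$, $r_A={L_\cdot}^*$, $\ell_B={R_\circ}^*$, $r_B={L_\circ}^*$, conditions~(1) and~(2) of Definition~\ref{de:match} are satisfied with no extra hypothesis. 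On the bialgebra side, $\Delta=\circ^*$ together with associativity of $\circ$ makes $(A,\Delta)$ a coassociative coalgebra, while $(A,\cdot)$ is associative by assumption, so conditions~(1) and~(2) of Definition~\ref{de:bial} also hold automatically. It therefore suffices to prove that the six compatibility conditions \eqref{eq:3.1}--\eqref{eq:3.6} are together equivalent to the two conditions \eqref{eq:3.14} and \eqref{eq:3.15}.

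The core of the argument is then a direct translation. Each of \eqref{eq:3.1}--\eqref{eq:3.6} is an identity living in $A^*$ or in $A$; I would pair it against an arbitrary element of the complementary space and rewrite every term using only the transpose rule~\eqref{eq:2.5} and the defining relation $\langle\Delta(x),u\ot v\rangle=\langle x,u\circ v\rangle$ for $x\in A$, $u,v\in A^*$. Carrying this out, each of \eqref{eq:3.1}, \eqref{eq:3.2}, \eqref{eq:3.3}, \eqref{eq:3.4} collapses to the single scalar identity
$$\langle\Delta(a\cdot b),u\ot v\rangle=\big\langle (R_A(b)\ot\id)\Delta(a)+(\id\ot L_A(a))\Delta(b),\ u\ot v\big\rangle,\qquad \forall\,a,b\in A,\ u,v\in A^*,$$
which is exactly Eq.~\eqref{eq:3.14}; in particular these four conditions turn out to be mutually equivalent. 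In the same way each of \eqref{eq:3.5} and \eqref{eq:3.6} reduces to the scalar form of Eq.~\eqref{eq:3.15}, the flip $\sigma$ appearing precisely because on one side the two tensor legs get swapped when the pairing is transported across. Assembling these reductions yields the asserted equivalence, and hence the theorem.

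The computation is routine, so the real obstacle is organizational: keeping straight the four transpose conventions (left versus right actions, and whether $\cdot$ or $\circ$ is being dualized) so that the terms line up. The one point that deserves genuine care is that \eqref{eq:3.3} and \eqref{eq:3.4}, which are the $A\leftrightarrow A^*$ mirror images of \eqref{eq:3.1} and \eqref{eq:3.2}, must be checked to feed back into Eq.~\eqref{eq:3.14} for $(A,\cdot,\Delta)$ itself rather than into a formally different ``co-opposite'' coproduct identity; this works because $\Delta$ is \emph{defined} as the dual of $\circ$, so no independent coproduct on $A^*$ enters the hypotheses and the mirror computation lands back on the same equation. Tracking $\sigma$ correctly in the reduction of \eqref{eq:3.5}--\eqref{eq:3.6} to \eqref{eq:3.15} is the only other place where a slot or sign could go astray.
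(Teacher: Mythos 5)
Your argument is correct: each of Eqs.~\eqref{eq:3.1}--\eqref{eq:3.4}, once paired against an arbitrary element of the complementary space and rewritten via Eq.~\eqref{eq:2.5} and the relation $\langle\Delta(x),u\ot v\rangle=\langle x,u\circ v\rangle$, is indeed individually equivalent to Eq.~\eqref{eq:3.14}, and each of Eqs.~\eqref{eq:3.5}--\eqref{eq:3.6} to Eq.~\eqref{eq:3.15} (the two differing only by an application of $\sigma$ to both sides), while the representation and (co)associativity conditions on both sides hold automatically. The paper states Theorem~\ref{thm:md} as a citation to \cite{Bai1} without reproducing a proof, and your term-by-term dualization is essentially the computation carried out there.
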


By~\mcite{Bai1}, for algebras $(A,\opa)$, $(B,\opb)$ and linear
maps $\ell_A, r_A: A\to \End_K(B)$, $\ell_B, r_B:B\to
\End_K(A)$, define a multiplication on the direct
sum $A\oplus B$ by
 \begin{equation}
 (a+b)\star (a'+b'):=(a\opa a'+a r_B(b')+\ell_B(b) a')+(b\opb b'+\ell_A(a) b'+b r_A(a')),
\mlabel{eq:3.7}
 \end{equation}
for $a, a' \in A$ and $ b, b'\in B$. Then $(A\oplus B, \star)$ is an algebra if and only if $((A,\opa),(B,\opb), \ell_A, r_A,$ $\ell_B,r_B)$ is a matched pair of $(A,\opa)$ and $(B,\opb)$.
We denote the resulting algebra $(A\oplus B,\star)$ by $A\bowtie_{\ell_A,r_A}^{\ell_B,r_B} B$ or simply
$A\bowtie B$.
Further, for any algebra $C$ whose underlying vector space is a
linear direct sum of two subalgebras $A$ and $B$, there is a
matched pair $(A,B,\ell_A,r_A,\ell_B,r_B)$ such that there is an
isomorphism from the resulting algebra $(A\oplus
B,\star)$ via Eq.~(\mref{eq:3.7}) to the algebra $C$ and the
restrictions of the isomorphism to $A$ and $B$ are the identity
maps.

We extend this property to Rota-Baxter algebras.

\begin{defi}
A {\bf matched pair of Rota-Baxter algebras} is a sextuple  $((A, P_A), (B, P_B),$ $\ell_A, r_A,$ $\ell_B, r_B)$ where $(A, P_A)$, $(B, P_B)$ are Rota-Baxter algebras, $(B, \ell_A, r_A, P_B)$ is a representation of $(A,P_A)$, $(A, \ell_B, r_B$, $P_A)$ is a representation of $(B, P_B)$,
and $(A,B, \ell_A,r_A,\ell_B,r_B)$ is a matched pair of algebras.
\end{defi}
\vspace{-.3cm}
\begin{thm}
Let $(A,P_A)$ and $(B,P_B)$ be Rota-Baxter algebras of weight $\lambda$ and let $(A,B, \ell_A,$ $ r_A,\ell_B, r_B)$ be a matched pair of the algebras $A$ and $B$. On the resulting algebra $A\bowtie B$ from Eq.~(\mref{eq:3.7}), define the linear map
 \begin{equation}
P_{A\oplus B}:A\bowtie B\to A\bowtie B,  \quad P_{A\oplus B}(a+b)=P_A(a)+P_B(b),\quad \forall a\in A, b\in B.
 \mlabel{eq:3.8}
 \end{equation}
Then the pair $(A\bowtie B,P_{A\oplus B})$ is a
Rota-Baxter algebra of weight $\lambda$ if and only if $((A,
P_A),$ $ (B, P_B), \ell_A, r_A,$ $\ell_B, r_B)$ is a matched pair
of the Rota-Baxter
algebras $(A,P_A)$ and $(B,P_B)$.
\mlabel{thm:3.1}
 \end{thm}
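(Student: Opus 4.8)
The plan is to verify the Rota-Baxter identity \meqref{eq:1.1} for the operator $P_{A\oplus B}$ on the algebra $A\bowtie B$ by a direct but organized computation. Since $(A,B,\ell_A,r_A,\ell_B,r_B)$ is assumed to be a matched pair of the algebras $A$ and $B$, the pair $(A\bowtie B,\star)$ is already an associative algebra by Eq.~\meqref{eq:3.7}; hence the only content of the theorem is whether Eq.~\meqref{eq:1.1} holds for $P_{A\oplus B}$ as defined in Eq.~\meqref{eq:3.8}. For fixed $x,y\in A\bowtie B$, both sides of Eq.~\meqref{eq:1.1}, viewed as functions of the pair $(x,y)$, are bilinear (using linearity of $P_{A\oplus B}$ and bilinearity of $\star$), so it suffices to verify the identity on pairs $(x,y)$ with each entry lying in $A$ or in $B$. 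As $A$ and $B$ are subalgebras of $A\bowtie B$ on which $P_{A\oplus B}$ restricts to $P_A$ and $P_B$ respectively, the cases $x,y\in A$ and $x,y\in B$ are precisely the Rota-Baxter identities of $(A,P_A)$ and $(B,P_B)$, which hold by hypothesis. Only the two mixed cases remain, and these are genuinely different since Eq.~\meqref{eq:1.1} is not symmetric in $x$ and $y$.

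For the case $x=a\in A$, $y=b\in B$, I would expand the four elements $P_A(a)\star P_B(b)$, $P_{A\oplus B}(a\star P_B(b))$, $P_{A\oplus B}(P_A(a)\star b)$ and $\lambda P_{A\oplus B}(a\star b)$ via Eqs.~\meqref{eq:3.7} and \meqref{eq:3.8}, then split each resulting element into its $A$- and $B$-components. Because $A\oplus B$ is a direct sum, Eq.~\meqref{eq:1.1} for this pair is equivalent to the vanishing of the $A$-part and of the $B$-part separately. A short calculation identifies the $A$-part with Eq.~\meqref{eq:2.2} for the quadruple $(A,\ell_B,r_B,P_A)$ (taking the operator ``$\alpha$'' there to be $P_A$, the operator ``$P$'' to be $P_B$, and the vector argument to be $a$), and identifies the $B$-part with Eq.~\meqref{eq:2.1} for the quadruple $(B,\ell_A,r_A,P_B)$. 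Running the symmetric case $x=b\in B$, $y=a\in A$ yields in the same way Eq.~\meqref{eq:2.1} for $(A,\ell_B,r_B,P_A)$ and Eq.~\meqref{eq:2.2} for $(B,\ell_A,r_A,P_B)$. Since these four identities together say exactly that $(B,\ell_A,r_A,P_B)$ is a representation of $(A,P_A)$ and $(A,\ell_B,r_B,P_A)$ is a representation of $(B,P_B)$, the chain of equivalences delivers both directions of the theorem simultaneously.

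The main obstacle is purely organizational: one must track carefully which of $\ell_A,r_A,\ell_B,r_B$ occurs in each summand of the product $\star$ and then, after applying $P_{A\oplus B}$, correctly read off whether each resulting monomial lies in $A$ or in $B$; doing this for the two mixed cases produces exactly the four conditions \meqref{eq:2.1}--\meqref{eq:2.2} for the quadruples $(B,\ell_A,r_A,P_B)$ and $(A,\ell_B,r_B,P_A)$. No idea beyond these equations is needed. As a consistency check, Proposition~\mref{pro:2.2} is recovered as the degenerate case $B=V$ with zero multiplication on $V$ and $\ell_B=r_B=0$, in which one mixed case becomes vacuous and the nontrivial conditions reduce precisely to Eqs.~\meqref{eq:2.1} and \meqref{eq:2.2} for $(V,\ell,r,\alpha)$.
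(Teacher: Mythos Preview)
Your proposal is correct and follows essentially the same approach as the paper: the paper expands both sides of Eq.~\meqref{eq:1.1} for general elements $a+b$ and $a'+b'$, then for the converse sets $a'=b=0$ and $a=b'=0$ to isolate the four representation conditions, which is exactly your two mixed cases. Your organization via bilinearity before expanding, rather than after, is a cosmetic difference only.
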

\vspace{-.2cm}
 \begin{proof}
Computing two sides of the Rota-Baxter equation~(\mref{eq:1.1})
for the operator $P_{A\oplus B}$, for $a, a'\in
A$ and $b, b'\in B$, on the left hand side of the equation we have
\begin{eqnarray*}
\lefteqn{P_{A\oplus B}(a+b)\star P_{A\oplus B}(a'+b')\stackrel{(\mref{eq:3.8})}{=}
(P_A(a)+P_B(b))\star (P_A(a')+P_B(b'))} \\
&\stackrel{(\mref{eq:3.7})}{=}&
P_A(a)\opa P_A(a')+P_A(a)r_B(P_B(b'))+\ell_B(P_B(b))P_A(a')\\
 &&+P_B(b)\opb P_B(b')+\ell_A(P_A(a))P_B(b')+P_B(b)r_A(P_A(a')),
\end{eqnarray*}
and on the right hand side of the equation, by a similar computation we obtain
{\small{
 \begin{eqnarray*}
&&P_{A\oplus B}\big(P_{A\oplus B}(a+b)\star (a'+b') +
(a+b)\star P_{A\oplus B}(a'+b') +\lambda (a+b)\star (a'+b')\big)\\
 &=&
 P_A\big(P_A(a)\opa a'+a\opa P_A(a')+\lambda a\opa a'\big) +P_A\big(P_A(a)r_B(b')+a r_B(P_B(b'))+\lambda a r_B(b')\big)\\
 && +P_A\big(\ell_B(P_B(b)) a'+\ell_B(b) P_A(a')+\lambda \ell_B(b) a'\big)
 +P_B\big(P_B(b)\opb b' +b\opb P_B(b')+\lambda b\opb b'\big)\\
&& +P_B\big(\ell_A(P_A(a)) b'+\ell_A(a)P_B(b')+\lambda \ell_A(a) b'\big)+P_B\big(P_B(b)r_A(a')+b r_A(P_A(a'))+\lambda b r_A(a')\big).
\end{eqnarray*}
}}
Now if $((A, P_A), (B, P_B), \ell_A, r_A,$ $\ell_B, r_B)$ is a matched pair of the Rota-Baxter algebras $(A,P_A)$ and $(B,P_B)$, then each term in the six-term sum of the left hand side equals the corresponding term of the right hand side. Therefore $P_{A\oplus B}$ is a Rota-Baxter operator of weight $\lambda$.

Conversely, suppose that $P_{A\oplus B}$ satisfies the Rota-Baxter
equation~(\mref{eq:1.1}). Taking $a'=b=0$ in the equation and
comparing, we obtain two of the four equalities in order for $((A,
P_A), (B, P_B),$ $ \ell_A, r_A,$ $\ell_B, r_B)$ to be a matched pair
of Rota-Baxter algebras. Likewise, taking $a=b'=0$, we obtain
another two of the four equalities in order to have a matched pair
of Rota-Baxter algebras.
\end{proof}

\begin{thm}
\mlabel{thm:rbinfbialg1}
Let $(A, \cdot,P)$ be a Rota-Baxter algebra. Suppose that there is
a Rota-Baxter algebra $(A^*, \circ,Q^*)$ on the linear dual
$A^*$ of $A$. Let $\Delta:A\rightarrow A\otimes A$ denote the linear dual of
the multiplication $\circ:A^*\ot A^*\to A^*$ on $A^*$, that is, $(A,\Delta,Q)$ is a Rota-Baxter coalgebra.
Then the quintuple
$(A,\cdot,\Delta,P,Q)$ is a Rota-Baxter \asi bialgebra if and only if
the sextuple $((A,P),(A^*,Q^*), {R_\cdot}^*, {L_\cdot}^*, {R_\circ}^*, {L_\circ}^*)$ is a matched pair of Rota-Baxter algebras.
\end{thm}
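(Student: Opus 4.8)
The plan is to reduce the statement to three earlier results: the characterization of \asi bialgebras by matched pairs of algebras (Theorem~\mref{thm:md}), the reformulation of Rota-Baxter \asi bialgebras via admissibility (Proposition~\mref{pro:asi2}), and the identification of admissibility with the condition that the dual of an adjoint representation be a Rota-Baxter representation (Definition~\mref{de:admop} together with Lemma~\mref{lem:admrep}).

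First, observe that the hypotheses already supply several of the required ingredients: $(A,\cdot,P)$ and $(A^*,\circ,Q^*)$ are Rota-Baxter algebras, and $(A,\Delta,Q)$ is the Rota-Baxter coalgebra dual to $(A^*,\circ,Q^*)$. Thus conditions~\meqref{it:rbasi2} and~\meqref{it:rbasi3} of Definition~\mref{de:rbbial} hold, as do both Rota-Baxter-algebra requirements in the definition of a matched pair of Rota-Baxter algebras. Consequently, by Proposition~\mref{pro:asi2}, the quintuple $(A,\cdot,\Delta,P,Q)$ is a Rota-Baxter \asi bialgebra if and only if $(A,\cdot,\Delta)$ is an \asi bialgebra, $Q$ is \admt $(A,P)$, and $P^*$ is \admt $(A^*,Q^*)$; while by the definition of a matched pair of Rota-Baxter algebras, the sextuple $((A,P),(A^*,Q^*),{R_\cdot}^*,{L_\cdot}^*,{R_\circ}^*,{L_\circ}^*)$ is a matched pair of Rota-Baxter algebras if and only if $(A,A^*,{R_\cdot}^*,{L_\cdot}^*,{R_\circ}^*,{L_\circ}^*)$ is a matched pair of algebras, $(A^*,{R_\cdot}^*,{L_\cdot}^*,Q^*)$ is a representation of the Rota-Baxter algebra $(A,P)$, and $(A,{R_\circ}^*,{L_\circ}^*,P)$ is a representation of the Rota-Baxter algebra $(A^*,Q^*)$. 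By Theorem~\mref{thm:md} the first conditions on the two sides coincide, so it suffices to match the two remaining pairs of conditions.

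For this, note that $(A^*,{R_\cdot}^*,{L_\cdot}^*)$ is precisely the dual representation of the adjoint representation $(A,L_\cdot,R_\cdot)$ of the algebra $A$. Applying Lemma~\mref{lem:admrep} with $V=A$, $(\ell,r)=(L_\cdot,R_\cdot)$ and $\beta=Q$, the quadruple $(A^*,{R_\cdot}^*,{L_\cdot}^*,Q^*)=(V^*,r^*,\ell^*,\beta^*)$ is a representation of $(A,P)$ if and only if $Q$ satisfies Eqs.~\meqref{eq:it:2.3a}--\meqref{eq:it:2.3b} for the adjoint representation, i.e., by Definition~\mref{de:admop} (see also Corollary~\mref{cor:pqadmin}), if and only if $Q$ is \admt $(A,P)$. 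For the other pair, since $A$ is finite dimensional we identify $A^{**}$ with $A$ and $P^{**}$ with $P$; then $(A,{R_\circ}^*,{L_\circ}^*)$ is the dual representation of the adjoint representation $(A^*,L_\circ,R_\circ)$ of the algebra $(A^*,\circ)$. Applying Lemma~\mref{lem:admrep} to the Rota-Baxter algebra $(A^*,Q^*)$ with $V=A^*$, $(\ell,r)=(L_\circ,R_\circ)$ and $\beta=P^*$ (so that $\beta^*=P$), the quadruple $(A,{R_\circ}^*,{L_\circ}^*,P)$ is a representation of $(A^*,Q^*)$ if and only if $P^*$ is \admt $(A^*,Q^*)$. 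Combining these two equivalences with the previous paragraph yields the theorem.

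The substantive computations are all already carried out in Theorem~\mref{thm:md}, Lemma~\mref{lem:admrep} and Proposition~\mref{pro:asi2}; the only point requiring attention here is the bookkeeping of duals: matching the slots $\ell_A,r_A,\ell_B,r_B$ in the definition of a matched pair of Rota-Baxter algebras with the transposed multiplications ${R_\cdot}^*,{L_\cdot}^*$ for $A$ and ${R_\circ}^*,{L_\circ}^*$ for $A^*$, and with the adjoint representations of $A$ and of $A^*$, together with the identification $A^{**}\cong A$, $P^{**}=P$ in the finite-dimensional setting. Once these identifications are fixed, the equivalence follows formally from the cited results.
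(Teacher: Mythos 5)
Your proof is correct and follows essentially the same route as the paper's: both directions reduce to Proposition~\ref{pro:asi2}, Theorem~\ref{thm:md}, and the identification (via Lemma~\ref{lem:admrep} and Definition~\ref{de:admop}) of admissibility of $Q$ and $P^*$ with the two representation conditions in the definition of a matched pair of Rota-Baxter algebras. Your write-up is in fact somewhat more explicit than the paper's about the dual-space bookkeeping ($A^{**}\cong A$, $P^{**}=P$), which the paper leaves implicit.
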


\begin{proof}
($\Longrightarrow$) If $(A,\cdot,\Delta,P,Q)$ is a Rota-Baxter \asi bialgebra, then
$(A,\cdot,\Delta)$ is an ASI
bialgebra and the linear operators $Q$ and $P^*$ are \admt $(A,\cdot, P)$ and
$(A^*,\circ, Q^*)$ respectively. The former means that $(A,A^*,{R_\cdot}^*, {L_\cdot}^*,{R_\circ}^*, {L_\circ}^*)$ is a
matched pair of algebras by Theorem~\mref{thm:md} and the latter means that $(A^*,R_\cdot ^*,L_\cdot^*,Q^*)$ is a representation of $(A,\cdot, P)$ and $(A,R_{\circ}^*,L_{\circ}^*,P)$ is a representation of $(A^*,\circ, Q^*)$. Hence $((A,P),(A^*,Q^*), {R_\cdot}^*, {L_\cdot}^*, {R_\circ}^*, {L_\circ}^*)$ is a matched pair of Rota-Baxter algebras.

($\Longleftarrow$)
By definition, if $((A,P),(A^*,Q^*), {R_\cdot}^*, {L_\cdot}^*, {R_\circ}^*, {L_\circ}^*)$ is a matched pair of Rota-Baxter algebras,
then $(A,A^*,{R_\cdot}^*, {L_\cdot}^*, {R_\circ}^*, {L_\circ}^*)$ is a
matched pair of algebras and the linear operators $Q$ and $P^*$ are \admt $(A,P)$ and
$(A^*,Q^*)$ respectively. Hence $(A,\cdot,\Delta,P,Q)$ is a Rota-Baxter \asi bialgebra.
\end{proof}
\vspace{-.3cm}
\subsection{Double constructions of Rota-Baxter Frobenius algebras}
\mlabel{ss:double}
We recall the concept of a double construction of Frobenius algebra. See~\mcite{Bai1} for details.
\vspace{-.1cm}
\begin{defi}
 A bilinear form $\frakB(\; ,\; )$ on an algebra $A$ is called {\bf invariant} if
\begin{equation}
 \mathfrak{B}(ab, c)=\mathfrak{B}(a, bc),~~\forall~a, b, c\in A.
 \mlabel{eq:1.3}
 \end{equation}
A {\bf Frobenius algebra} $(A,\frakB)$ is an algebra $A$ with a
nondegenerate invariant bilinear form $\frakB(\; ,\; )$. A Frobenius
algebra $(A,\frakB)$ is called {\bf symmetric} if $\frakB(\; ,\;
)$ is symmetric.
 \mlabel{de:1.4}
 \end{defi}
Let $(A,\cdot)$ be an algebra. Suppose that there is an algebra
structure $\circ$ on its dual space $A^\ast$, and an algebra
structure on the direct sum $A\oplus A^\ast$ of the underlying
vector spaces of $A$ and $A^\ast$ which contains both $(A,\cdot)$
and $(A^\ast,\circ)$ as subalgebras. Define a
bilinear form on $A\oplus A^*$ by
\vspace{-.2cm}
\begin{equation}
\frakB_d(x + a^* ,y + b^* ) = \langle x,b^*\rangle + \langle a^* , y\rangle, \quad \forall a^*, b^* \in A^* , x, y \in A.
\mlabel{eq:3.9}
\end{equation}
If $\frakB_d$ is invariant, so that
$(A\oplus A^*,\frakB_d)$ is a symmetric Frobenius algebra, then the Frobenius algebra is called a {\bf double construction of Frobenius algebra} associated to $(A,\cdot)$ and
$(A^\ast,\circ)$, which is denoted by $(A\bowtie A^* , \frakB_d)$.
The notation $A\bowtie A^*$ is justified since the algebra on
$A\oplus A^*$ comes from a matched pair from $A$ and $A^*$ in Eq.~(\mref{eq:3.7}). Indeed, we have

\begin{thm}
\cite[Theorem~2.2.1]{Bai1} Let $(A,\cdot)$ and $(A^*,\circ)$ be algebras.
Then there is a double construction of
Frobenius algebra associated to $(A,\cdot)$ and $(A^*,\circ)$ if
and only if $(A,A^*, R^*_\cdot, L^*_\cdot,R^*_\circ,L^*_\circ)$ is
a matched pair of algebras.
\mlabel{thm:frob}
\end{thm}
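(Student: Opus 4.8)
The plan is to prove the two implications separately, using the fact that the bilinear form $\frakB_d$ of Eq.~\eqref{eq:3.9} is automatically symmetric and nondegenerate, so that the only substantive requirement on the Frobenius side is the invariance condition Eq.~\eqref{eq:1.3}; dually, on the matched-pair side the crux will be that invariance of $\frakB_d$ forces the four structure maps to be exactly the coadjoint (transpose) actions $R^*_\cdot, L^*_\cdot, R^*_\circ, L^*_\circ$.

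For the forward implication, I would start from a double construction, i.e.\ an algebra multiplication $\star$ on the vector space $A\oplus A^*$ having $(A,\cdot)$ and $(A^*,\circ)$ as subalgebras and with $\frakB_d$ invariant. Since $A\oplus A^*$ is then an algebra that decomposes as a direct sum of the two subalgebras $A$ and $A^*$, the description of such algebras recalled after Eq.~\eqref{eq:3.7} provides linear maps $\ell_A, r_A\colon A\to\End_K(A^*)$ and $\ell_B, r_B\colon A^*\to\End_K(A)$ with $(A,A^*,\ell_A,r_A,\ell_B,r_B)$ a matched pair of algebras and $\star$ given by Eq.~\eqref{eq:3.7} (with $\opa=\cdot$ and $\opb=\circ$); in particular the matched-pair axioms Eqs.~\eqref{eq:3.1}--\eqref{eq:3.6} come for free from associativity of $\star$. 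It then remains only to identify the four maps. Here I would feed elements of the two components into the invariance identity $\frakB_d(u\star v,w)=\frakB_d(u,v\star w)$: because $\frakB_d$ pairs $A$ only with $A^*$ and vanishes on $A\times A$ and on $A^*\times A^*$, each ``mixed'' triple $(u,v,w)$ collapses to a single scalar equation. For instance, taking $u=a,v=b\in A$ and $w=c^*\in A^*$ yields $\langle c^*, a\cdot b\rangle=\langle\ell_A(b)c^*, a\rangle$ for all $a$, which by Eq.~\eqref{eq:2.5} forces $\ell_A=R^*_\cdot$; the triples in $A^*\times A\times A$, in $A^*\times A^*\times A$, and in $A\times A^*\times A^*$ similarly pin down $r_A=L^*_\cdot$, $\ell_B=R^*_\circ$, and $r_B=L^*_\circ$. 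Hence the matched pair is precisely $(A,A^*,R^*_\cdot,L^*_\cdot,R^*_\circ,L^*_\circ)$.

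For the converse, assume $(A,A^*,R^*_\cdot,L^*_\cdot,R^*_\circ,L^*_\circ)$ is a matched pair of algebras and form the algebra $A\bowtie A^*$ via Eq.~\eqref{eq:3.7}; this is automatically an algebra, and by construction it contains $(A,\cdot)$ and $(A^*,\circ)$ as subalgebras, while $\frakB_d$ is symmetric and nondegenerate with no further hypotheses. What is left is to check the invariance Eq.~\eqref{eq:1.3} for $\frakB_d$ on $A\oplus A^*$; by trilinearity this reduces to the eight triples with each argument in $A$ or in $A^*$. The two pure triples give $0=0$ since $\frakB_d$ kills $A\times A$ and $A^*\times A^*$, while each of the six mixed triples, after expanding $\star$ by Eq.~\eqref{eq:3.7} and discarding the components that pair trivially, becomes one instance of the transpose identity $\langle v^*,\varphi(v)\rangle=\langle\varphi^*(v^*),v\rangle$ for one of $L_\cdot,R_\cdot,L_\circ,R_\circ$ (cf.\ Eq.~\eqref{eq:2.5}), and therefore holds. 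Thus $(A\bowtie A^*,\frakB_d)$ is a double construction of Frobenius algebra associated to $(A,\cdot)$ and $(A^*,\circ)$. The main obstacle I anticipate is purely organizational: in both directions one must keep careful track of which of the four actions --- left versus right, and $A$-on-$A^*$ versus $A^*$-on-$A$ --- is being tested by each mixed triple, and confirm that the invariance equations genuinely determine all four maps (one per distinct mixed triple) rather than leaving any of them undetermined.
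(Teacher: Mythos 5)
Your argument is correct and is essentially the standard proof of this result; the paper itself gives no proof but imports it from \cite[Theorem~2.2.1]{Bai1}, where the argument runs exactly as you describe (the recalled fact that any algebra splitting as a direct sum of two subalgebras comes from a matched pair via Eq.~\eqref{eq:3.7}, plus the observation that invariance of $\frakB_d$ on the mixed triples is equivalent to the four actions being the coadjoint ones $R^*_\cdot, L^*_\cdot, R^*_\circ, L^*_\circ$). Your bookkeeping of which mixed triple determines which action checks out, so there is nothing to add.
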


We now extend these notions and properties to Rota-Baxter algebras.

\begin{defi}\mlabel{de:1.3}
A {\bf Rota-Baxter Frobenius algebra} is a triple $(A,P,\frakB)$
where $(A,P)$ is a Rota-Baxter algebra and $(A,\frakB)$ is a Frobenius algebra.
Let $\hat{P}:A\to A$ denote the adjoint linear transformation of $P$ under the nondegenerate bilinear form $\frakB$:
\begin{equation}\mlabel{eq:adjoint}
\mathfrak{B}(P(a), b)=\mathfrak{B}(a, \hat P(b)),\;\;\forall a,b\in A.
\end{equation}
\end{defi}

It is remarkable that the symmetric Frobenius property of a Rota-Baxter
algebra $(A,P)$ naturally guarantees a representation on the dual
space $A^*$.

\begin{pro}
Let $(A,P,\frakB)$ be a Rota-Baxter symmetric Frobenius algebra. Then for the adjoint operator $\hat P$ in Eq.~\meqref{eq:adjoint},
the quadruple $(A^*, R^*,L^*,{\hat P}^*)$ is a representation of the Rota-Baxter algebra $(A,P)$ that is equivalent to $(A, L, R, P)$.

Conversely, let $(A,P)$ be a Rota-Baxter algebra and $Q:A\to
A$ be a linear map that is \admt $(A,P)$. If the resulting representation $(A^*, R^*,L^*,Q^*)$ of $(A,P)$ is equivalent to $(A,L,R,P)$, then there exists a nondegenerate bilinear form
$\frakB(\;,\;)$ such that $(A,P,\frakB)$ is a Rota-Baxter Frobenius algebra for which $\hat P=Q$.
\mlabel{pp:frobadm}
\end{pro}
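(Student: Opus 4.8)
The plan is to treat the two directions separately, exploiting the classical Frobenius-algebra correspondence (the weight-zero, no-operator case) and then bolting on the compatibility with the Rota-Baxter operators.

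For the forward direction, starting from a Rota-Baxter symmetric Frobenius algebra $(A,P,\frakB)$, I would first invoke the classical fact (implicit in~\cite{Bai1}) that the symmetric nondegenerate invariant form $\frakB$ furnishes an isomorphism of representations $\theta:A\to A^*$, $x\mapsto \frakB(x,-)$, intertwining the adjoint representation $(A,L,R)$ with the dual representation $(A^*,R^*,L^*)$; invariance of $\frakB$ is exactly what makes $\theta$ equivariant for $\ell$ and $r$, and symmetry guarantees the left/right sides match up without an extra flip. Then I would check that $\theta$ also intertwines $P$ with $\hat P^*$: from the defining relation $\frakB(P(a),b)=\frakB(a,\hat P(b))$ one gets, for all $a,b$, $\langle \theta(P(a)),b\rangle=\frakB(P(a),b)=\frakB(a,\hat P(b))=\langle\theta(a),\hat P(b)\rangle=\langle \hat P^*(\theta(a)),b\rangle$, so $\theta\circ P=\hat P^*\circ\theta$. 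Since $(A,L,R,P)$ is the adjoint representation of $(A,P)$ and representations are preserved under isomorphism (Eq.~\eqref{de:2.1}), $(A^*,R^*,L^*,\hat P^*)$ is a representation of $(A,P)$ equivalent to $(A,L,R,P)$. In particular, by Lemma~\mref{lem:admrep} applied with $\beta=\hat P$, the operator $\hat P$ is \admt $(A,P)$ — a fact worth recording even though the statement only asks for the equivalence of representations.

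For the converse, suppose $Q$ is \admt $(A,P)$ and the resulting representation $(A^*,R^*,L^*,Q^*)$ of $(A,P)$ is equivalent to the adjoint representation $(A,L,R,P)$ via some linear isomorphism $\varphi:A\to A^*$. Define $\frakB(a,b):=\langle \varphi(a),b\rangle$. Nondegeneracy is immediate from $\varphi$ being an isomorphism. Invariance — $\frakB(ab,c)=\frakB(a,bc)$ — follows from the equivariance conditions $\varphi(\ell(x)v)=\ell(x)\varphi(v)$ and $\varphi(vr(x))=\varphi(v)r(x)$ specialized to the adjoint representation, exactly as in the proof of \cite[Theorem 2.2.1]{Bai1}; this does not yet use symmetry. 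To get symmetry of $\frakB$, the standard trick is to replace $\varphi$ by its "symmetrization": from equivariance one deduces that $\varphi^*\circ\iota_A$ (suitably interpreted via the canonical identification $A\cong A^{**}$ in the finite-dimensional setting) is again an equivariant isomorphism, so $\varphi+\varphi^*$ — or a scalar multiple, after checking it is still invertible, or else passing to the subspace on which the antisymmetric part vanishes — yields a symmetric equivariant isomorphism; here one uses finite-dimensionality and, if necessary, an argument that the symmetric part is nondegenerate because the dual representation of an indecomposable piece is again indecomposable. Finally, the third equivariance relation $\varphi\,\alpha_1=\alpha_2\,\varphi$, i.e. $\varphi\circ P=Q^*\circ\varphi$, unwinds via $\langle\varphi(P(a)),b\rangle=\langle Q^*(\varphi(a)),b\rangle=\langle\varphi(a),Q(b)\rangle$ to $\frakB(P(a),b)=\frakB(a,Q(b))$, which says precisely that $Q$ is the adjoint $\hat P$ of $P$ with respect to $\frakB$. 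Hence $(A,P,\frakB)$ is a Rota-Baxter (symmetric) Frobenius algebra with $\hat P=Q$.

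The main obstacle is the symmetry of $\frakB$ in the converse direction: equivalence of representations only produces \emph{some} invariant nondegenerate form, and one must manufacture a \emph{symmetric} one without destroying nondegeneracy or the relation $\hat P=Q$. I expect to handle this exactly as in the classical \asi-bialgebra case, by symmetrizing $\varphi$ and checking the symmetrized form remains nondegenerate and equivariant (the equivariance is linear in $\varphi$, so it is inherited by $\varphi+\varphi^*$; the relation $\varphi\circ P=Q^*\circ\varphi$ is likewise linear, so it too passes to the symmetrization once one knows $P$ and $Q$ are mutually adjoint, which is built into the admissibility/equivalence hypothesis). All the Rota-Baxter-specific content — the two extra equations~\eqref{eq:it:2.3a}--\eqref{eq:it:2.3b} — enters only through Lemma~\mref{lem:admrep} and is otherwise transparent, so the proof is essentially the classical Frobenius argument with one additional compatible operator carried along.
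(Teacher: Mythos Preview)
Your forward direction is correct and essentially matches the paper, though you argue by transport of structure along $\theta$ (so that $(A^*,R^*,L^*,\hat P^*)$ inherits the representation axioms from $(A,L,R,P)$), whereas the paper instead verifies directly that $\hat P$ satisfies the admissibility equations~\meqref{eq:pduqr}--\meqref{eq:pduql} by expanding $\frakB(P(a)P(b)-P(aP(b))-P(P(a)b)-\lambda P(ab),c)=0$ and pushing $\frakB$ across; only afterwards does it check that $\phi$ gives the equivalence. Either route is fine.

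The converse, however, contains a misreading that leads you into an unnecessary and shaky argument. Reread the statement: the conclusion is only that $(A,P,\frakB)$ is a \emph{Rota-Baxter Frobenius algebra}, not a \emph{symmetric} one (cf.\ Definitions~\mref{de:1.4} and~\mref{de:1.3}). So there is nothing to symmetrize. From the equivalence $\varphi$ you already get invariance $\frakB(ba,c)=\frakB(b,ac)$ directly from $\varphi(bR(a))=\varphi(b)L^*(a)$, nondegeneracy from bijectivity of $\varphi$, and $\hat P=Q$ from $\varphi\circ P=Q^*\circ\varphi$; that is the entire argument, and it is exactly what the paper does. Your proposed symmetrization via $\varphi+\varphi^*$ is not only superfluous but genuinely problematic: there is no reason $\varphi+\varphi^*$ should remain invertible (it vanishes identically when $\frakB$ happens to be antisymmetric), and your fallback suggestions (``passing to the subspace on which the antisymmetric part vanishes'', indecomposability considerations) do not repair this. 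Drop that paragraph and the converse is done.
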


\begin{proof}For $a,b,c\in A$, by the Rota-Baxter relation in Eq.~(\mref{eq:1.1}), we obtain
\begin{eqnarray*}
0&=& \frakB(P(a)P(b),c)-\frakB(P(aP(b)),c)-\frakB(P(P(a)b),c)-\frakB(\lambda P(ab),c)\\
&=& \frakB(P(a),P(b)c)- \frakB(aP(b),\hat P(c)) - \frakB(P(a)b, \hat P(c)) - \frakB(\lambda ab,\hat P(c))\\
&=& \frakB\Big(a,\hat P(P(b)c) - P(b)\hat P(c) - \hat P(b\hat P(c)) - \lambda b\hat P(c)\Big),
\end{eqnarray*}
yielding
$ \hat P(P(b)c)-P(b)\hat P(c)-\hat P(b\hat P(c))-\lambda b\hat P(c)=0.$
This gives Eq.~(\mref{eq:pduql}). Applying the symmetry of
$\frakB$, a similar argument gives Eq.~(\mref{eq:pduqr}). Hence
$(A^*, R^*,L^*,{\hat P}^*)$ is a representation of $(A,P)$. Define
a linear map $\phi:A\to  A^*$ by
$$\phi(a)(b):=\langle \phi(a), b\rangle=\frakB(a,b),\;\;\forall a,b\in A.$$
The nondegeneracy of $\frakB$ gives the bijectivity of $\phi$.
Also for $a,b,c\in A$, we have
\begin{eqnarray*}
&\phi (L(a)b)c=\frakB(ab,c)=\frakB(c,ab)=\frakB(ca,b)=\langle \phi(b), ca\rangle=\langle R^*(a)\phi(b), c\rangle=R^*(a)\phi(b) c
\end{eqnarray*}
and similarly $\phi(aR(b))c=\phi(a)L^*(b)c$ and $\phi(P(a))b=\hat{P}^*(\phi(a))b.$
Hence $(A, L, R, P)$ is equivalent
to $(A^*, R^*,L^*,{\hat P}^*)$ as representations of $(A,P)$.

Conversely, suppose that $\phi:A\rightarrow A^*$ is the linear
isomorphism giving the equivalence between $(A, L, R, P)$ and
$(A^*, R^*,L^*,Q^*)$. Define a bilinear form $\frakB(\;,\;)$ on
$A$ by
$$\frakB(a,b):=\langle \phi(a), b\rangle,\;\;\forall a,b\in A.$$
Then a similar argument gives the Rota-Baxter Frobenius algebra $(A,P,\frakB)$
and $\hat P=Q$.
\end{proof}

We now extend the notion of double constructions to Rota-Baxter Frobenius algebras.

 \begin{defi}   \mlabel{de:3.4}
 Let $(A, \cdot, P)$ be a Rota-Baxter algebra. Suppose that $(A^*,\circ, Q^*)$ is a Rota-Baxter algebra.
 A {\bf double construction of Rota-Baxter Frobenius algebra} associated to $(A, \cdot, P)$ and $(A^*,\circ, Q^*)$ is a double construction $(A\bowtie A^*, \frakB_d)$ of Frobenius algebra associated to $(A, \cdot)$ and $(A^*,\circ)$ such that $(A\bowtie A^*,P+ Q^*, \frakB_d)$ is a  Rota-Baxter Frobenius algebra, that is, $P+Q^*$ is a Rota-Baxter operator on $A\bowtie A^*$.

 \end{defi}

By definition, both $(A, \cdot, P)$ and $(A^*,\circ, Q^*)$ are
Rota-Baxter subalgebras of $(A\bowtie A^*,P+ Q^*)$. Thus the
Rota-Baxter algebra $(A\bowtie A^*,P+Q^*)$ comes from a matched
pair of the Rota-Baxter algebras $(A,P)$ and $(A^*,Q^*)$ in Theorem~\mref{thm:3.1}.

\begin{lem}
\mlabel{lem:abas}
Let $(A\bowtie A^*, P+Q^*, \mathfrak{B}_d)$ be a double construction of Rota-Baxter Frobenius algebra associated to $(A,P)$ and $(A^*,Q^*)$.
\begin{enumerate}
\item The adjoint $\widehat{ P+Q^*}$ of $P+Q^*$ with respect to $\frakB_d$ is $Q+ P^*$. Further $Q+P^*$ is \admt $(A\bowtie A^*, P+Q^*)$.
\mlabel{it:abas1}
\item $Q$ is admissible to $(A, P)$.
\mlabel{it:abas2}
\item $P^*$ is admissible to $(A^*, Q^*)$.
\mlabel{it:abas3}
\end{enumerate}
\end{lem}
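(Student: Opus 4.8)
The plan is to establish part~(1) by a direct computation of the adjoint of $P+Q^*$ under $\frakB_d$ followed by an appeal to Proposition~\ref{pp:frobadm}, and then to deduce parts~(2) and~(3) by restricting the admissibility relations produced by~(1) to the two Rota-Baxter subalgebras $A$ and $A^*$ of $A\bowtie A^*$.

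For part~(1), I would unwind the defining relation $\frakB_d\big((P+Q^*)(z),w\big)=\frakB_d\big(z,\widehat{P+Q^*}(w)\big)$. Using $(P+Q^*)(x+a^*)=P(x)+Q^*(a^*)$, the formula~\eqref{eq:3.9} for $\frakB_d$ and the definitions of the transpose maps $P^*,Q^*$, one obtains, for $x,y\in A$ and $a^*,b^*\in A^*$,
\[
\begin{aligned}
\frakB_d\big((P+Q^*)(x+a^*),\,y+b^*\big) &=\langle P(x),b^*\rangle+\langle Q^*(a^*),y\rangle\\
&=\langle x,P^*(b^*)\rangle+\langle a^*,Q(y)\rangle =\frakB_d\big(x+a^*,\,Q(y)+P^*(b^*)\big).
\end{aligned}
\]
By nondegeneracy of $\frakB_d$ this forces $\widehat{P+Q^*}(y+b^*)=Q(y)+P^*(b^*)$, i.e.\ $\widehat{P+Q^*}=Q+P^*$. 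Since $\frakB_d$ is symmetric, nondegenerate and invariant and, by Definition~\ref{de:3.4}, $P+Q^*$ is a Rota-Baxter operator on $A\bowtie A^*$, the triple $(A\bowtie A^*,P+Q^*,\frakB_d)$ is a Rota-Baxter symmetric Frobenius algebra; hence Proposition~\ref{pp:frobadm} applies and shows that its adjoint $\widehat{P+Q^*}=Q+P^*$ is \admt $(A\bowtie A^*,P+Q^*)$, which is part~(1).

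For parts~(2) and~(3), I would note that, by Corollary~\ref{cor:pqadmin}, the admissibility statement just obtained amounts to Eqs.~\eqref{eq:pduqr}--\eqref{eq:pduql} for the algebra $A\bowtie A^*$ with $P$ replaced by $P+Q^*$ and $Q$ replaced by $Q+P^*$, holding for all elements of $A\bowtie A^*$. Now $A$ is a subalgebra of $A\bowtie A^*$ that is stable under both $P+Q^*$ and $Q+P^*$, since on $A$ these operators restrict to $P$ and to $Q$ respectively (by the direct-sum forms found in part~(1)); likewise $A^*$ is a subalgebra stable under $P+Q^*$ and $Q+P^*$, on which they restrict to $Q^*$ and to $P^*$. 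Specializing the two equations above to arguments in $A$, every term remains in $A$ and the product $\star$ of $A\bowtie A^*$ becomes the product $\cdot$ of $A$, so they collapse exactly to Eqs.~\eqref{eq:pduqr}--\eqref{eq:pduql} for $(A,\cdot,P,Q)$; by Corollary~\ref{cor:pqadmin} again, $Q$ is \admt $(A,P)$, proving part~(2). Specializing instead to arguments in $A^*$, they collapse to Eqs.~\eqref{eq:pduqrddual}--\eqref{eq:pduqlddual} for $(A^*,\circ,Q^*,P^*)$, which is exactly the admissibility of $P^*$ to $(A^*,Q^*)$, proving part~(3).

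I do not expect a real obstacle: part~(1) is a one-line bilinear-form identity plus the citation of Proposition~\ref{pp:frobadm}, and parts~(2)--(3) use only the elementary principle that an admissibility relation, being an identity built from the multiplication and the two unary operators, descends to any subalgebra stable under both operators. The sole point requiring a moment of care is to confirm that $A$ and $A^*$ are stable not merely under the Rota-Baxter operator $P+Q^*$ but also under $Q+P^*$; this is immediate once part~(1) has identified the adjoint $\widehat{P+Q^*}$ with $Q+P^*$, acting as $Q$ on $A$ and as $P^*$ on $A^*$.
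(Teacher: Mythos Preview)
Your proof is correct and follows essentially the same approach as the paper's: part~(1) is identical (compute the adjoint directly from~\eqref{eq:3.9}, then invoke Proposition~\ref{pp:frobadm}), and parts~(2)--(3) are obtained by restricting the admissibility identities to the subalgebras $A$ and $A^*$. The paper phrases this restriction concretely by writing out the identities for $a+a^*$, $b+b^*$ and then setting $a^*=b^*=0$ (resp.\ $a=b=0$), whereas you package the same step as ``descent to a subalgebra stable under both operators''; the content is the same.
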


\begin{proof}
(\mref{it:abas1}) For $a,b\in A, a^*,b^*\in
A^*$, by Eq.~(\mref{eq:3.9}), we have {\small
\begin{eqnarray*}
\frakB_d\big((P+Q^*)(a+a^*),b+b^*\big)&=&\frakB\big(P(a)+Q^*(a^*),b+b^*\big)
=\langle P(a),b^*\rangle + \langle Q^*(a^*), b\rangle\\
&=&\langle a, P^*(b^*)\rangle + \langle a^*, Q(b)\rangle
=\frakB_d(a+a^*, (Q+P^*)(b+b^*)).
\end{eqnarray*}
}
Hence the adjoint $\widehat{ P+Q^*}$ of $P+Q^*$ with respect to
$\frakB_d$ is $Q+ P^*$. By Proposition~\ref{pp:frobadm},
$\widehat{ P+Q^*}=Q+P^*$ is \admt $(A\bowtie
A^*, P+Q^*)$.

\smallskip

\noindent
 (\mref{it:abas2}) By Item~(\mref{it:abas1}), $Q+P^*$ is admissible to $(A\bowtie A^*, P+Q^*)$. By Eqs.~(\mref{eq:it:2.3a}) and (\mref{eq:it:2.3b}), this is true if and only if, for $a,b\in A, a^*,b^*\in A^*$,
 \begin{eqnarray*}
 &&(Q+P^*)((a+a^*)(P(b)+Q^*(b^*)))=(Q(a)+P^*(a^*))(P(b)+Q^*(b^*))\\
 &&+(Q+P^*)((Q(a)+P^*(a^*))(b+b^*))+\lam (Q(a)+P^*(a^*))(b+b^*)
 \end{eqnarray*}
 and
 \begin{eqnarray*}
 &&(Q+P^*)((P(a)+Q^*(a^*))(b+b^*))=(P(a)+Q^*(a^*))(Q(b)+P^*(b^*))\\
 &&+(Q+P^*)((a+a^*)(Q(b)+P^*(b^*)))+\lam (a+a^*)(Q(b)+P^*(b^*)).
 \end{eqnarray*}

Now taking $a^*=b^*=0$ in the above equations gives the admissibility of $Q$ to $(A,P)$.
\smallskip

\noindent
 (\mref{it:abas3}) Likewise, taking $a=b=0$ in the above equations yields the desired admissibility:
 \begin{eqnarray*}
 &&P^*(a^*Q^*(b^*))=P^*(a^*)Q^*(b^*)+P^*(P^*(a^*)b^*)+\lam P^*(a^*)b^*,\\
 &&P^*(Q^*(a^*)b^*)=Q^*(a^*)P^*(b^*)+P^*(a^*P^*(b^*))+\lam a^*P^*(b^*). \qquad \qquad \qquad \qquad \qquad \qedhere
 \end{eqnarray*}
\end{proof}

Extending Theorem~\mref{thm:frob} to Rota-Baxter Frobenius algebras, we obtain

\begin{thm} Let $(A,\cdot,P)$ be a Rota-Baxter algebra. Suppose that there is a Rota-Baxter algebra structure $(A^*,\circ,Q^*)$ on its dual space $A^\ast$. Then there is a double construction of Rota-Baxter Frobenius
algebra $(A\oplus A^*,P_{A\oplus A^*},\frakB_d)$ associated to $(A,\cdot,P)$ and $(A^*,\circ,Q^*)$ if and only if
$((A,P),(A^*,Q^*), R^*_\cdot, L^*_\cdot,R^*_\circ,L^*_\circ)$ is a
matched pair of Rota-Baxter algebras. \mlabel{thm:3.7}
\end{thm}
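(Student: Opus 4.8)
The plan is to deduce the statement directly from the two structural results already established: Theorem~\ref{thm:frob}, which identifies double constructions of Frobenius algebras with matched pairs of algebras, and Theorem~\ref{thm:3.1}, which tells us exactly when the operator $P_{A\oplus B}=P_A+P_B$ is a Rota-Baxter operator on a bicrossed product $A\bowtie B$. The key point to exploit is that Definition~\ref{de:3.4} imposes \emph{no} compatibility between the bilinear form $\frakB_d$ and the operator $P+Q^*$: a double construction of Rota-Baxter Frobenius algebra associated to $(A,\cdot,P)$ and $(A^*,\circ,Q^*)$ is precisely a double construction of Frobenius algebra $(A\bowtie A^*,\frakB_d)$ associated to $(A,\cdot)$ and $(A^*,\circ)$ together with the single extra requirement that $P_{A\oplus A^*}:=P+Q^*$ be a Rota-Baxter operator on $A\bowtie A^*$. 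So the invariant form and the Rota-Baxter operator can be analyzed independently.

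First I would recall from Theorem~\ref{thm:frob} that a double construction of Frobenius algebra associated to $(A,\cdot)$ and $(A^*,\circ)$ exists if and only if $(A,A^*,R^*_\cdot,L^*_\cdot,R^*_\circ,L^*_\circ)$ is a matched pair of algebras, in which case the algebra underlying $A\bowtie A^*$ is exactly the one produced by Eq.~\eqref{eq:3.7}. Next, with this matched pair of algebras in force, I would apply Theorem~\ref{thm:3.1} with $B=A^*$, $P_B=Q^*$ and $(\ell_A,r_A,\ell_B,r_B)=(R^*_\cdot,L^*_\cdot,R^*_\circ,L^*_\circ)$; since the operator $P_{A\oplus A^*}$ of Eq.~\eqref{eq:3.8} is then literally $P+Q^*$, Theorem~\ref{thm:3.1} says that $P+Q^*$ is a Rota-Baxter operator on $A\bowtie A^*$ if and only if $((A,P),(A^*,Q^*),R^*_\cdot,L^*_\cdot,R^*_\circ,L^*_\circ)$ is a matched pair of Rota-Baxter algebras, i.e.\ if and only if in addition $(A^*,R^*_\cdot,L^*_\cdot,Q^*)$ is a representation of $(A,P)$ and $(A,R^*_\circ,L^*_\circ,P)$ is a representation of $(A^*,Q^*)$.

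Combining the two gives both directions. For the forward implication, a double construction of Rota-Baxter Frobenius algebra supplies, via Theorem~\ref{thm:frob}, the matched pair of algebras, and since $P+Q^*$ is a Rota-Baxter operator on $A\bowtie A^*$, Theorem~\ref{thm:3.1} promotes it to a matched pair of Rota-Baxter algebras. For the converse, a matched pair of Rota-Baxter algebras contains a matched pair of algebras, so Theorem~\ref{thm:frob} yields the symmetric Frobenius algebra $(A\bowtie A^*,\frakB_d)$, while Theorem~\ref{thm:3.1} shows $P+Q^*$ is a Rota-Baxter operator on it; hence $(A\bowtie A^*,P+Q^*,\frakB_d)$ is a Rota-Baxter Frobenius algebra and, by Definition~\ref{de:3.4}, a double construction of Rota-Baxter Frobenius algebra. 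I do not expect a genuine obstacle here; the only things needing care are confirming that Definition~\ref{de:3.4} really does decouple the two structures, so that nothing beyond the matched-pair-of-Rota-Baxter-algebras axioms is required, and checking that the operator in Theorem~\ref{thm:3.1} specializes exactly to $P+Q^*$ under the identification $B=A^*$, $P_B=Q^*$ --- both of which are immediate from the definitions.
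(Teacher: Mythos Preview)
Your proposal is correct and follows essentially the same approach as the paper's own proof: both directions are obtained by combining Theorem~\ref{thm:frob} (to pass between double constructions of Frobenius algebras and the matched pair $(A,A^*,R^*_\cdot,L^*_\cdot,R^*_\circ,L^*_\circ)$) with Theorem~\ref{thm:3.1} (to pass between $P+Q^*$ being a Rota-Baxter operator on $A\bowtie A^*$ and the matched pair being one of Rota-Baxter algebras). Your explicit remark that Definition~\ref{de:3.4} decouples the invariant form from the operator is exactly the observation that makes the argument go through.
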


\begin{proof}
($\Longrightarrow$) The given double construction $(A\oplus
A^*,P_{A\oplus A^*},\frakB_d)$ associated to the
Rota-Baxter algebras $(A,P)$ and $(A^*,Q^*)$ implies that
$(A\oplus A^*,\frakB_d)$ is a double construction of Frobenius
algebra associated to $A$ and $A^*$. Hence by
Theorem~\mref{thm:frob}, $(A,A^*, {R_\cdot}^*, {L_\cdot}^*,
{R_\circ}^*, {L_\circ}^*)$ is a matched pair of algebras for which
the algebra on $A\oplus A^*$ is the algebra $A\bowtie A^*$. Since
the Rota-Baxter operator $P_{A\oplus A^*}$ is $P+Q^*$,
$((A,P),(A^*,Q^*), {R_\cdot}^*, {L_\cdot}^*, {R_\circ}^*,
{L_\circ}^*)$ is a matched pair of Rota-Baxter algebras.
\smallskip

\noindent ($\Longleftarrow$) If $((A, P), (A^*, Q^*), {R_\cdot}^*,
{L_\cdot}^*, {R_\circ}^*, {L_\circ}^*)$ is a matched pair of
Rota-Baxter algebras, then $(A,A^*, $ ${R_\cdot}^*, {L_\cdot}^*,
{R_\circ}^*, {L_\circ}^*)$ is a matched pair of algebras. Hence by
Theorem~\mref{thm:frob} again, $(A\bowtie A^*,\frakB_d)$ is a
Frobenius algebra. By Theorem~\mref{thm:3.1}, the matched pair of
Rota-Baxter algebras also equips the algebra
$A\bowtie A^*$ with the Rota-Baxter operator $P+Q^*$, giving us a
Rota-Baxter Frobenius algebra. This is what we need.
\end{proof}

Combining Theorems~\ref{thm:rbinfbialg1} and~\ref{thm:3.7}, we have
\begin{thm}
Let $(A, \cdot,P)$ be a Rota-Baxter algebra. Suppose that there is
a Rota-Baxter algebra $(A^*, \circ,Q^*)$ on the linear dual
$A^*$ of $A$. Let $\Delta:A\rightarrow A\otimes A$ denote the linear dual of
the multiplication $\circ:A^*\ot A^*\to A^*$ on $A^*$. Then the following conditions are equivalent.
 \begin{enumerate}
 \item The sextuple $((A,P),(A^*,Q^*), {R_\cdot}^*, {L_\cdot}^*, {R_\circ}^*, {L_\circ}^*)$ is a matched pair of Rota-Baxter algebras.
\mlabel{it:rbb1}
\item There is a double construction of
Rota-Baxter Frobenius algebra associated to $(A,\cdot,P)$ and
$(A^*,\circ,Q^*)$. \mlabel{it:rbb2}
\item The quintuple
$(A,\cdot,\Delta,P,Q)$ is a Rota-Baxter \asi bialgebra.
\mlabel{it:rbb3}
\end{enumerate}
\mlabel{thm:rbbial}
\mlabel{thm:rbinfbialg}
\end{thm}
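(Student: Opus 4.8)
The plan is to treat condition~\eqref{it:rbb1}, the matched-pair condition, as the hub through which the other two are linked, since both of the required bi-implications have already been established. Concretely, I would deduce \eqref{it:rbb1}$\Leftrightarrow$\eqref{it:rbb3} from Theorem~\ref{thm:rbinfbialg1} and \eqref{it:rbb1}$\Leftrightarrow$\eqref{it:rbb2} from Theorem~\ref{thm:3.7}, and then conclude by transitivity of logical equivalence that all three statements are equivalent. In this sense the proof is essentially a one-line assembly of two earlier results.

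The one point that genuinely needs attention is that the three statements are being fed exactly the same data. In each of them we start from a Rota-Baxter algebra $(A,\cdot,P)$ and a Rota-Baxter algebra $(A^*,\circ,Q^*)$ on the linear dual, with $\Delta:A\to A\ot A$ defined as the transpose of $\circ$. By the finite-dimensional duality between Rota-Baxter algebras and Rota-Baxter coalgebras recalled after Definition~\ref{de:corb}, $(A,\Delta,Q)$ is then automatically a Rota-Baxter coalgebra, which is precisely the coalgebra datum occurring in the hypotheses of both Theorem~\ref{thm:rbinfbialg1} and Theorem~\ref{thm:3.7}; so no reconciliation of conventions is needed and the two bi-implications may be quoted verbatim. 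I do not expect a real obstacle, since the substantive work has already been carried out in Theorem~\ref{thm:rbinfbialg1} (resting on Proposition~\ref{pro:asi2}, Corollary~\ref{cor:pqadmin} and Theorem~\ref{thm:md}) and in Theorem~\ref{thm:3.7} (resting on Theorem~\ref{thm:frob}, Theorem~\ref{thm:3.1} and Proposition~\ref{pp:frobadm}).

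If one wished to make the chain fully transparent in a single pass instead of merely citing the two theorems, one could argue directly: \eqref{it:rbb3} says that $(A,\cdot,\Delta)$ is an \asi bialgebra with $Q$ \admt $(A,P)$ and $P^*$ \admt $(A^*,Q^*)$ (Proposition~\ref{pro:asi2}); the \asi bialgebra part is equivalent by Theorem~\ref{thm:md} to $(A,A^*,{R_\cdot}^*,{L_\cdot}^*,{R_\circ}^*,{L_\circ}^*)$ being a matched pair of algebras, while the two admissibility conditions are exactly the representation requirements $(A^*,{R_\cdot}^*,{L_\cdot}^*,Q^*)$ of $(A,P)$ and $(A,{R_\circ}^*,{L_\circ}^*,P)$ of $(A^*,Q^*)$ needed to promote that matched pair of algebras to a matched pair of Rota-Baxter algebras, giving \eqref{it:rbb1}; and Theorem~\ref{thm:frob} together with Theorem~\ref{thm:3.1} then upgrades the matched pair of algebras to a double construction of Frobenius algebra carrying the operator $P+Q^*$, giving \eqref{it:rbb2}, with every step reversible. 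Either route closes the loop, and the bookkeeping of which admissibility condition corresponds to which representation is the only place where a little care is required.
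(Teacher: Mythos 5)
Your proposal is correct and is exactly the paper's argument: the theorem is stated as an immediate combination of Theorem~\ref{thm:rbinfbialg1} (giving \eqref{it:rbb1}$\Leftrightarrow$\eqref{it:rbb3}) and Theorem~\ref{thm:3.7} (giving \eqref{it:rbb1}$\Leftrightarrow$\eqref{it:rbb2}), with \eqref{it:rbb1} as the hub. Your extra remark on the coalgebra datum matching across the two cited theorems is a sensible sanity check but not needed beyond what the paper already assumes.
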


\section{Coboundary Rota-Baxter ASI bialgebras, admissible associative Yang-Baxter equations and $\mathcal{O}$-operators}
\mlabel{sec:aybe}
In this section, we study the coboundary
Rota-Baxter \asi bialgebras and show that they can be given by antisymmetric
solutions of the \qadm associative Yang-Baxter equation in a Rota-Baxter algebra.
 We also give the notion of
$\mathcal{O}$-operators on a Rota-Baxter algebra and show that
$\mathcal{O}$-operators provide antisymmetric solutions of \qadm
associative Yang-Baxter equation in suitable Rota-Baxter algebras.

\subsection{Coboundary Rota-Baxter ASI bialgebras}
\mlabel{ss:cbdy}
For given $r\in A\otimes A$, define
 \begin{equation}
 \Delta(a):=\Delta_r(a):=(\id\otimes L(a)-R(a)\otimes \id)(r), \quad \forall a\in A.
 \mlabel{eq:4.1}
 \end{equation}

\begin{defi}  \mlabel{de:4.1}
A Rota-Baxter \asi bialgebra $((A,P), \Delta, Q)$ is called {\bf coboundary} if $\Delta$ is defined by Eq.~(\mref{eq:4.1}) for some $r\in A\ot A$.
 \end{defi}

 \begin{rmk} Let $(A, P)$ be a $Q$-admissible Rota-Baxter algebra and $r\in A\otimes A$. If $\Delta: A\to  A\otimes A$ is given by Eq.~(\mref{eq:4.1}), then $\Delta$ satisfies Eq.~(\mref{eq:3.14}). Moreover, by \cite[Proposition 2.3.4]{Bai1}, $\Delta$ satisfies Eq.~(\mref{eq:3.15}) if and only if $r$ satisfies
 \begin{equation}\mlabel{eq:4.9}
 (L(a)\otimes \id-\id\otimes R(a))(\id\otimes L(b)-R(b)\otimes \id)(r+\sigma(r))=0, \;\; \forall a, b\in A.
 \end{equation}
 By \cite[Proposition 2.3.3]{Bai1}, we know that $\Delta^*$ defines an algebra on $A^*$ if and only if
 \begin{equation}\mlabel{eq:4.2}
 (\id\otimes \id\otimes L(a)-R(a)\otimes \id\otimes \id)(r_{12}r_{13}+r_{13}r_{23}-r_{23}r_{12})=0, \;\;\forall a\in A,
 \end{equation}
where
for $r=\sum_ia_i\otimes b_i$,
\begin{equation*}
r_{12}r_{13}=\sum_{i,j}a_ia_j\otimes b_i\otimes
b_j,\;\;r_{13}r_{23}=\sum_{i,j}a_i\otimes a_j\otimes
b_ib_j,\;\;r_{23}r_{12}=\sum_{ij} a_j\otimes a_ib_j\otimes b_i.
\end{equation*}

Hence in order for $((A, P), \Delta, Q)$ to be a Rota-Baxter \asi
bialgebra, we just need to further require that $(A^*, \Delta^*,
Q^*)$ is a $P^*$-admissible Rota-Baxter algebra, that is,
$(A,\Delta,Q)$ is a Rota-Baxter coalgebra and Eqs.
(\mref{eq:pduqrd}) and (\mref{eq:pduqld}) hold.
  \mlabel{rmk:4.2}
 \end{rmk}

\begin{thm} \mlabel{thm:pq}
Let $(A, P)$ be a $Q$-admissible Rota-Baxter algebra and $r\in A\otimes A$. Define a linear map $\Delta: A\to  A\otimes A$ by Eq.~$($\mref{eq:4.1}$)$. Suppose that $\Delta^*$ defines an associative
multiplication on $A^*$. Then the following conclusions hold.
 \begin{enumerate}
\item \mlabel{it:pq1} Eq.~$($\mref{eq:corb}$)$ holds if and only
if for $a\in A$,
 \begin{eqnarray}\mlabel{eq:corbo}
 &&\ \ \big(\id\otimes Q( L(a))-\id\otimes L(Q(a))\big)(Q\otimes \id-\id\otimes P)(r)\\
 &&+\big(Q( R(a))\otimes \id-R(Q(a))\otimes \id\big)(P\otimes \id-\id\otimes Q)(r)=0.\nonumber
 \end{eqnarray}
  \item \mlabel{it:pq2}
 Eq.~$($\mref{eq:pduqrd}$)$ holds if and only if for  $a\in A$,
 \begin{eqnarray}\mlabel{eq:P*admissible1}
 &&\big(\id\otimes L(P(a))-R(P(a))\otimes \id+\id\otimes Q( L(a))\\
 &&+P( R(a))\otimes \id+\lambda \id\otimes L(a)\big)(P\otimes \id-\id\otimes Q)(r)=0.\nonumber
 \end{eqnarray}
\item \mlabel{it:pq3} Eq.~$($\mref{eq:pduqld}$)$ holds if and only
if for $a\in A$,
 \begin{eqnarray}\mlabel{eq:P*admissible2}
 &&\big(\id\otimes L(P(a))-R(P(a))\otimes \id-\id\otimes P( L(a))\\
 &&-Q( R(a))\otimes \id-\lambda R(a)\otimes \id\big)(Q\otimes \id-\id\otimes P)(r)=0.\nonumber
 \end{eqnarray}
 \end{enumerate}
 \end{thm}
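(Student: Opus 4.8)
The three parts are structurally parallel, so the plan is to set up one computational framework that handles all of them. Write $r=\sum_i a_i\ot b_i$ and, using Eq.~\eqref{eq:4.1}, expand $\Delta(a)=\sum_i\big(a_i\ot a\cdot b_i - a_i\cdot a\ot b_i\big)$. I would first record explicit formulas for the four compositions appearing in Eqs.~\eqref{eq:pduqrd} and~\eqref{eq:pduqld}, namely $(P\ot Q)\Delta$, $(P\ot\id)\Delta P$, $(P\ot\id)\Delta$, $(\id\ot Q)\Delta P$ and their left-tensor-factor analogues, each evaluated on a general $a\in A$ and written as an element of $A\ot A$ in terms of the $a_i,b_i$, the operators $L,R,P,Q$, and multiplication in $A$. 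The coalgebra condition Eq.~\eqref{eq:corb} for $Q$ unwinds similarly into an identity in $A\ot A$ after applying $Q\ot Q$, $Q\ot\id$, $\id\ot Q$ to $\Delta Q(a)$.

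For part~\eqref{it:pq1}, I would substitute these expansions into Eq.~\eqref{eq:corb}, then repeatedly use the Rota-Baxter identity Eq.~\eqref{eq:1.1} for $P$ and $Q$ together with the admissibility relations Eqs.~\eqref{eq:pduqr}--\eqref{eq:pduql} (available since $(A,P)$ is $Q$-admissible, by Corollary~\ref{cor:pqadmin}) to collapse the many terms. The point is that most terms cancel in pairs once the Rota-Baxter relations are applied to products like $Q(a_i\cdot a\cdot b_i)$ and the $P$/$Q$ mixed compatibilities are used; what survives should reorganize exactly into the two grouped operators $\big(\id\ot Q(L(a))-\id\ot L(Q(a))\big)(Q\ot\id-\id\ot P)(r)$ and $\big(Q(R(a))\ot\id-R(Q(a))\ot\id\big)(P\ot\id-\id\ot Q)(r)$. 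I would present this as: LHS minus RHS of Eq.~\eqref{eq:corb} equals the displayed expression Eq.~\eqref{eq:corbo} after bookkeeping, hence one vanishes iff the other does. Parts~\eqref{it:pq2} and~\eqref{it:pq3} are handled the same way, now feeding in the $P^*$-admissibility rewriting Eqs.~\eqref{eq:pduqrddual}--\eqref{eq:pduqlddual} from Proposition~\ref{pro:asi2} (equivalently Eqs.~\eqref{eq:pduqrd}--\eqref{eq:pduqld} themselves in their primal form): substitute the expansions of $(\id\ot Q)\Delta P$ etc., apply the Rota-Baxter relation for $P$ to the products $P(a_i\cdot a)$, $P(a\cdot b_i)$ that appear, and use that $\Delta^*$ is associative only where needed (it is in fact not needed for these two — the hypothesis is there to make $(A^*,\Delta^*)$ an algebra so the statement is meaningful). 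The terms should coalesce into the single operator block acting on $(P\ot\id-\id\ot Q)(r)$ in part~\eqref{it:pq2} and on $(Q\ot\id-\id\ot P)(r)$ in part~\eqref{it:pq3}.

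The main obstacle is purely organizational: each of the four compositions in Eqs.~\eqref{eq:pduqrd}--\eqref{eq:pduqld} produces a sum of four monomials after expanding $\Delta$ via Eq.~\eqref{eq:4.1}, and then each monomial spawns three more terms when the Rota-Baxter relation Eq.~\eqref{eq:1.1} is applied to an internal product, so one is tracking on the order of dozens of terms per equation and must see precisely which pairs cancel and which assemble into the factored form on the left of Eqs.~\eqref{eq:corbo}--\eqref{eq:P*admissible2}. The key simplification to keep in mind is that $(Q\ot\id-\id\ot P)(r)$ and $(P\ot\id-\id\ot Q)(r)$ are the natural ``defect'' tensors measuring failure of $Q$ (resp.\ $P$) to be compatible in the obvious way, and the whole computation is engineered so that the leftover terms are exactly a differential-type operator $\id\ot(Q L(a)-L(Q a))$, $(QR(a)-R(Qa))\ot\id$, etc., applied to these defect tensors; recognizing these operator combinations early lets one group terms without blind expansion. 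I would also remark that when $Q=-\lambda\id$ and $P=-\lambda\id$ all three displayed conditions become trivial, which is a useful consistency check against Example~\ref{ex:scalar}. The verification that each surviving collection of terms matches the claimed operator block, after applying Eqs.~\eqref{eq:pduqr}--\eqref{eq:pduql} (and in part~\eqref{it:pq1} also Eq.~\eqref{eq:corb}'s structure), is the step where care is required; everything else is routine substitution.
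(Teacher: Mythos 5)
Your plan coincides with the paper's proof: write $r=\sum_i a_i\otimes b_i$, compute the difference of the two sides of Eq.~\eqref{eq:corb} (resp.\ of Eqs.~\eqref{eq:pduqrd} and \eqref{eq:pduqld}) after expanding $\Delta$ via Eq.~\eqref{eq:4.1}, and show---using the Rota-Baxter identity for $P$ together with the admissibility relations \eqref{eq:pduqr}--\eqref{eq:pduql}---that this difference equals the factored operator expression in \eqref{eq:corbo} (resp.\ \eqref{eq:P*admissible1}, \eqref{eq:P*admissible2}); the paper merely runs the same computation in the opposite direction, starting from the factored expression and collapsing it to the defect. The only caveat is your phrase about ``feeding in'' Eqs.~\eqref{eq:pduqrd}--\eqref{eq:pduqld} for parts (b) and (c), which must not be taken literally (that would be circular since those are the equations being characterized); the computation you actually describe---expanding $(\id\ot Q)\Delta P$ etc.\ and applying the Rota-Baxter relation and \eqref{eq:pduql}---is the correct non-circular one, and your observation that associativity of $\Delta^*$ is not used in the calculation is also accurate.
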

\vspace{-.3cm}
\begin{proof} (\mref{it:pq1}). Set $r=\sum_i a_i\otimes b_i$. By Eq.~(\mref{eq:4.1}), we have
{\small
\begin{eqnarray*}
 &&(\id\otimes Q( L(a))-\id\otimes L(Q(a)))(Q\otimes \id-\id\otimes P)(r)\\
 &&+(Q( R(a))\otimes \id-R(Q(a))\otimes \id)(P\otimes \id-\id\otimes Q)(r)\\
&=& \sum_{i}Q(a_i)\otimes Q(ab_i)-Q(a_ia)\otimes Q(b_i)-Q(a_i)\otimes Q(a)b_i+Q(P(a_i)a)\otimes b_i\\
 &&-P(a_i)Q(a)\otimes b_i-a_i\otimes Q(aP(b_i))+a_i\otimes Q(a)P(b_i)+a_iQ(a)\otimes Q(b_i)\\
 &=&\sum_{i}Q(a_i)\otimes Q(ab_i)-Q(a_ia)\otimes Q(b_i)-Q(a_i)\otimes Q(a)b_i+Q(P(a_i)a)\otimes b_i\\
 &&-P(a_i)Q(a)\otimes b_i-\lambda a_iQ(a)\otimes b_i-a_i\otimes Q(aP(b_i))+a_i\otimes Q(a)P(b_i)\\
 &&+\lambda a_i\otimes Q(a)b_i+a_iQ(a)\otimes Q(b_i)-\lambda a_i\otimes Q(a)b_i+\lambda a_iQ(a)\otimes b_i\\
 &\stackrel{(\mref{eq:pduqr})(\mref{eq:pduql})}{=}&
 \sum_{i}Q(a_i)\otimes Q(ab_i)-Q(a_ia)\otimes Q(b_i)-Q(a_i)\otimes Q(a)b_i+Q(a_iQ(a))\otimes b_i\\
 &&-a_i\otimes Q(Q(a)b_i)+a_iQ(a)\otimes Q(b_i)-\lambda a_i\otimes Q(a)b_i+\lambda a_iQ(a)\otimes b_i\\
&\stackrel{(\mref{eq:4.1})}{=}& (Q\ot Q)\Delta(a)-(Q\ot \id+\id\ot
Q)\Delta Q(a)-\lambda \Delta Q(a).
\end{eqnarray*}}
Hence Eq.~$($\mref{eq:corb}$)$ holds if and only if
Eq.~(\ref{eq:corbo}) holds. Items~(\mref{it:pq2}) and
(\mref{it:pq3}) can be proved by the same argument.
 \end{proof}

By Remark~\mref{rmk:4.2} and Theorem~\mref{thm:pq}, we have
 \begin{cor} Let $(A, P)$ be a $Q$-admissible Rota-Baxter algebra and $r\in A\otimes A$. Then the linear map $\Delta$ defined by Eq.~$($\mref{eq:4.1}$)$ induces a $P^*$-admissible Rota-Baxter algebra
  $(A^*, \Delta^*, Q^*)$ such that $((A, P), \Delta, Q)$ is a Rota-Baxter \asi bialgebra if and only if
  Eqs. $($\mref{eq:4.9}$)$-$($\mref{eq:P*admissible2}$)$ are satisfied.
\mlabel{thm:4.6a}
 \end{cor}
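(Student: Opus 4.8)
The plan is to read off the corollary by assembling the two results it cites, Remark~\ref{rmk:4.2} and Theorem~\ref{thm:pq}, and verifying that together they account for exactly the defining conditions of a Rota-Baxter \asi bialgebra. I would start from Definition~\ref{de:rbbial}: the quintuple $((A,P),\Delta,Q)$ is a Rota-Baxter \asi bialgebra precisely when $(A,\cdot,\Delta)$ is an \asi bialgebra, $(A,\cdot,P)$ is a Rota-Baxter algebra (which is part of the hypothesis), $(A,\Delta,Q)$ is a Rota-Baxter coalgebra (Eq.~\eqref{eq:corb}), and the four compatibility equations \eqref{eq:pduqr}, \eqref{eq:pduql}, \eqref{eq:pduqrd}, \eqref{eq:pduqld} all hold. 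The first reduction I would record is that, since $(A,P)$ is $Q$-admissible, Corollary~\ref{cor:pqadmin} already furnishes Eqs.~\eqref{eq:pduqr} and \eqref{eq:pduql}; so these drop out, and what remains to analyze is: that $(A,\cdot,\Delta)$ is an \asi bialgebra, plus Eqs.~\eqref{eq:corb}, \eqref{eq:pduqrd}, \eqref{eq:pduqld}.

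Next I would handle the \asi bialgebra condition via Remark~\ref{rmk:4.2}. Because $\Delta=\Delta_r$ is defined from $r$ by Eq.~\eqref{eq:4.1}, the infinitesimal relation Eq.~\eqref{eq:3.14} is automatic; the antisymmetry relation Eq.~\eqref{eq:3.15} holds $\Leftrightarrow$ $r$ satisfies Eq.~\eqref{eq:4.9}; and $\Delta^*$ defines an associative product on $A^*$, i.e. $(A,\Delta)$ is coassociative, $\Leftrightarrow$ Eq.~\eqref{eq:4.2} holds. Hence $(A,\cdot,\Delta)$ is an \asi bialgebra iff Eqs.~\eqref{eq:4.9} and \eqref{eq:4.2} both hold; and once Eq.~\eqref{eq:4.2} is assumed, $\Delta^*$ is an associative multiplication on $A^*$, which is exactly the standing hypothesis of Theorem~\ref{thm:pq}.

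Finally, working under Eqs.~\eqref{eq:4.9} and \eqref{eq:4.2}, I would apply the three equivalences of Theorem~\ref{thm:pq} in turn: Eq.~\eqref{eq:corb} $\Leftrightarrow$ Eq.~\eqref{eq:corbo}, Eq.~\eqref{eq:pduqrd} $\Leftrightarrow$ Eq.~\eqref{eq:P*admissible1}, and Eq.~\eqref{eq:pduqld} $\Leftrightarrow$ Eq.~\eqref{eq:P*admissible2}. Chaining all the equivalences shows that $((A,P),\Delta,Q)$ is a Rota-Baxter \asi bialgebra iff Eqs.~\eqref{eq:4.9}, \eqref{eq:4.2}, \eqref{eq:corbo}, \eqref{eq:P*admissible1}, \eqref{eq:P*admissible2} hold; and, by the concluding sentence of Remark~\ref{rmk:4.2}, the last three of these are precisely the statement that $(A^*,\Delta^*,Q^*)$ is the $P^*$-admissible Rota-Baxter algebra appearing in the corollary (they encode that $(A,\Delta,Q)$ is a Rota-Baxter coalgebra and that Eqs.~\eqref{eq:pduqrd}, \eqref{eq:pduqld} hold). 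I do not expect a real obstacle in this corollary; the only thing requiring attention is the bookkeeping --- making sure the $Q$-admissibility hypothesis is what discharges Eqs.~\eqref{eq:pduqr}--\eqref{eq:pduql}, that Eq.~\eqref{eq:4.2} is what licenses the use of Theorem~\ref{thm:pq}, and that no defining condition from Definition~\ref{de:rbbial} has been quietly left out.
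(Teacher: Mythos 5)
Your proposal is correct and follows exactly the route the paper intends: the paper's entire proof of this corollary is the single line "By Remark~\mref{rmk:4.2} and Theorem~\mref{thm:pq}, we have...", and your write-up is precisely the spelled-out version of that derivation, including the two points the paper leaves implicit (that $Q$-admissibility discharges Eqs.~\eqref{eq:pduqr}--\eqref{eq:pduql} via Corollary~\mref{cor:pqadmin}, and that Eq.~\eqref{eq:4.2} is what validates the standing hypothesis of Theorem~\mref{thm:pq}). No gaps.
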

\vspace{-.3cm}
\begin{ex}\label{ex:cobo}
We continue with the notations in Example~\ref{ex:directsum}. Suppose in addition that $(A,\cdot_A,\Delta_A)$ and $(B,\cdot_B,\Delta_B)$ are
coboundary ASI bialgebras, that is, there exist $r_1\in
A\otimes A$ and $r_2\in B\otimes B$ such that
\vspace{-.2cm}
\begin{eqnarray*}
\Delta_A(a)&=&(\id\otimes L(a)-R(a)\otimes \id)(r_1),\ \forall a\in A,\\
\Delta_B(b)&=&(\id\otimes L(b)-R(b)\otimes \id)(r_2),\ \forall b\in B.
\end{eqnarray*}
Then $(A\oplus B, \cdot_A+\cdot_B,\Delta_A+\Delta_B)$ is a
coboundary ASI bialgebra with
$r=r_1+r_2$:
\begin{equation}\label{eq:cob}
(\Delta_A+\Delta_B)(a+b)=(\id_{A\oplus B}\otimes
L(a+b)-R(a+b)\otimes \id_{A\oplus B})(r),\forall a\in A, b\in B.
\end{equation}
It is straightforward to check that Eqs.~(\ref{eq:corbo})-(\ref{eq:P*admissible2}) hold for
$P=P_A$, $Q=P_B$ and $r=r_1+r_2$. Hence
the Rota-Baxter ASI bialgebra $(A\oplus B,
\cdot_A+\cdot_B,\Delta_A+\Delta_B, P_A,P_B)$ is coboundary.
\end{ex}

We now prove a self-duality of a Rota-Baxter \asi bialgebra and give its construction on the double space.

 \begin{thm}  \mlabel{thm:4.7}
 Let $((A, P),\Delta, Q)$ be a Rota-Baxter \asi bialgebra. Let $\delta:A^*\to A^*\ot A^*$ be the linear dual of the multiplication on $A$. Then $((A^*,Q^*),-\delta,P^*)$ is also a Rota-Baxter \asi bialgebra.
 Further there is a Rota-Baxter \asi bialgebra structure on the direct sum $A\oplus A^*$ of
 the underlying vector spaces of $A$ and $A^*$ which contains the two Rota-Baxter \asi bialgebras as Rota-Baxter \asi sub-bialgebras.

 \end{thm}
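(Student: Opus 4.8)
The plan is to establish the two assertions separately: the self-duality via the admissibility reformulation of Section~\ref{ss:rep} (Proposition~\ref{pro:asi2}), and the construction on $A\oplus A^*$ via the coboundary theory of this section (Corollary~\ref{thm:4.6a}). I shall use repeatedly the elementary fact that every defining equation of a Rota-Baxter \asi bialgebra in which the comultiplication occurs --- coassociativity and Eqs.~\eqref{eq:3.14}, \eqref{eq:3.15}, \eqref{eq:corb}, \eqref{eq:pduqrd}, \eqref{eq:pduqld} --- and likewise Eqs.~\eqref{eq:pduqr}--\eqref{eq:pduql} in which the multiplication occurs, is linear and homogeneous in that operation, so that changing the sign of a multiplication or comultiplication preserves these conditions. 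For the self-duality, write $\circ=\Delta^*$ and $\delta=\cdot^*$. Since $(A,\cdot,\Delta)$ is an \asi bialgebra, $(A^*,\circ,-\delta)$ is an \asi bialgebra (the self-duality of \asi bialgebras; see~\cite{Bai1}, or apply Theorem~\ref{thm:md} and the symmetry of the matched-pair notion together with the sign-invariance just noted); $(A^*,\circ,Q^*)$ is a Rota-Baxter algebra by hypothesis; and $(A^*,-\delta,P^*)$ is a Rota-Baxter coalgebra, being a sign change of the dual of the Rota-Baxter algebra $(A,\cdot,P)$. By Proposition~\ref{pro:asi2}, it then remains to check, for the quintuple $(A^*,\circ,-\delta,Q^*,P^*)$, that $P^*$ is \admt $(A^*,Q^*)$ and that $Q$ --- the transpose of the algebra operator $Q^*$ under $(A^*)^*\cong A$ --- is \admt the Rota-Baxter algebra $(A,-\cdot,P)$ obtained from $(A,\cdot,P)$ by negating the multiplication. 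The first of these is part of the hypothesis that $((A,P),\Delta,Q)$ is a Rota-Baxter \asi bialgebra (via Proposition~\ref{pro:asi2}), and the second is equivalent, by the sign-invariance above, to $Q$ being \admt $(A,\cdot,P)$, which is likewise part of that hypothesis. Hence $((A^*,Q^*),-\delta,P^*)$ is a Rota-Baxter \asi bialgebra.

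For the double, let $D:=A\bowtie A^*$ be the algebra attached by Eq.~\eqref{eq:3.7} to the matched pair $(A,A^*,{R_\cdot}^*,{L_\cdot}^*,{R_\circ}^*,{L_\circ}^*)$, and recall the double construction of \asi bialgebras of~\cite{Bai1}: $D$ carries the coboundary \asi bialgebra structure $\Delta_D:=\Delta_r$ of Eq.~\eqref{eq:4.1} with $r=\sum_i e_i\otimes e^i\in D\otimes D$, for a basis $\{e_i\}$ of $A$ and the dual basis $\{e^i\}$ of $A^*$; a short expansion of Eq.~\eqref{eq:4.1} gives $\Delta_D|_A=\Delta$ and $\Delta_D|_{A^*}=-\delta$, so $(A,\cdot,\Delta)$ and $(A^*,\circ,-\delta)$ are \asi sub-bialgebras of $(D,\star,\Delta_D)$. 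Now put $P_D:=P+Q^*$ and $Q_D:=Q+P^*$ on $D$. By Theorem~\ref{thm:3.1}, $(D,\star,P_D)$ is a Rota-Baxter algebra --- indeed $(D,\star,P_D,\frakB_d)$ is the double construction of Rota-Baxter Frobenius algebra of Theorem~\ref{thm:3.7} --- and by Lemma~\ref{lem:abas}(\ref{it:abas1}), $\widehat{P_D}=Q_D$ is \admt $(D,P_D)$; hence $(D,P_D)$ is $Q_D$-admissible and, by Corollary~\ref{cor:pqadmin}, Eqs.~\eqref{eq:pduqr}--\eqref{eq:pduql} hold for $(D,\star,\Delta_D,P_D,Q_D)$. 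For the remaining conditions, $\Delta_D$ is a coboundary, so Corollary~\ref{thm:4.6a} applies: it suffices to verify that $r$ satisfies Eqs.~\eqref{eq:4.9}--\eqref{eq:P*admissible2} on $D$ for $P_D$ and $Q_D$. The first two hold because $(D,\star,\Delta_D)$ is an \asi bialgebra, and the last three reduce --- upon splitting each tensor factor along $D=A\oplus A^*$ and using the matched-pair relations together with the admissibility of $Q$ to $(A,P)$ and of $P^*$ to $(A^*,Q^*)$ --- to identities. Thus $(D,\star,\Delta_D,P_D,Q_D)$ is a Rota-Baxter \asi bialgebra, and since $P_D,Q_D$ restrict on $A$ to $P,Q$ and on $A^*$ to $Q^*,P^*$, it contains $((A,P),\Delta,Q)$ and $((A^*,Q^*),-\delta,P^*)$ as Rota-Baxter \asi sub-bialgebras.

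The step I expect to be the main obstacle is this last verification: that the canonical element $r=\sum_i e_i\otimes e^i$ of the double satisfies Eqs.~\eqref{eq:corbo}, \eqref{eq:P*admissible1}, \eqref{eq:P*admissible2} for $P_D=P+Q^*$ and $Q_D=Q+P^*$. It is a bookkeeping computation, but one that must be organized according to the $A$- and $A^*$-components of each of the three tensor slots; the identities it reduces to are precisely the compatibility Eqs.~\eqref{eq:pduqr}--\eqref{eq:pduqld} of the original bialgebra, which is why no new hypothesis is needed. A more conceptual route for this step is to invoke the self-duality of the \asi bialgebra $(D,\star,\Delta_D)$ under $\phi_{\frakB_d}$, which interchanges $P_D$ and $Q_D$: the $P_D^*$-admissibility to $(D^*,Q_D^*)$ demanded by Proposition~\ref{pro:asi2} is then the $\phi_{\frakB_d}$-transport of the $Q_D$-admissibility to $(D,P_D)$ already established, up to the sign-invariance noted above.
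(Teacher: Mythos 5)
Your proposal is correct and follows essentially the same route as the paper: sign-invariance of the defining conditions plus Proposition~\ref{pro:asi2} for the self-duality statement, and the canonical element $r=\sum_i e_i\otimes e^i$ with $P_D=P+Q^*$, $Q_D=Q+P^*$, Theorem~\ref{thm:3.1}, Lemma~\ref{lem:abas} and Corollary~\ref{thm:4.6a} for the structure on $A\oplus A^*$. The only point where you overestimate the work is precisely the step you flag as the main obstacle: Eqs.~\eqref{eq:corbo}, \eqref{eq:P*admissible1} and \eqref{eq:P*admissible2} each consist of operators applied to $(P_D\otimes \id-\id\otimes Q_D)(r)$ or $(Q_D\otimes \id-\id\otimes P_D)(r)$, and both of these tensors vanish identically for the canonical element (since $\sum_i P(e_i)\otimes e^i=\sum_i e_i\otimes P^*(e^i)$ and $\sum_i Q(e_i)\otimes e^i=\sum_i e_i\otimes Q^*(e^i)$), so no component-by-component bookkeeping and no appeal to Eqs.~\eqref{eq:pduqr}--\eqref{eq:pduqld} is needed at that stage.
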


 \begin{proof}
 Denote the product on the algebra $A^*$ by
$\circ$. By \cite[Remark 2.2.4]{Bai1}, $(A^*,\circ,-\delta)$ is an
ASI bialgebra. Moreover, $Q$ is admissible to the Rota-Baxter
algebra $(A,P)$ whose algebra structure is
given by $-\delta^*$ if and only if $Q$ is
admissible to the Rota-Baxter algebra $(A,P)$ whose algebra
structure is given by $\delta^*$. Therefore with the
fact that $P^*$ is admissible to $(A^*,Q^*)$, we show that
$((A^*,Q^*),-\delta,P^*)$ is a Rota-Baxter \asi bialgebra.

 Let $r\in A\otimes A^*\subset (A\oplus A^*)\otimes (A\oplus A^*)$ correspond to the identity map $\id$ on $A$. Let $\{e_1, e_2, \cdots, e_n\}$ be a basis of $A$ and $\{e^1, e^2, \cdots, e^n\}$ its dual basis.
 Then $r=\sum^n_{i=1}e_i\otimes e^i$.
 Let $(A\bowtie A^*, \star)$ denote the
 algebra structure on $A\oplus A^*$ induced by the matched pair $(A,A^*, {R_\cdot}^*, {L_\cdot}^*, {R_\circ}^*,
 {L_\circ}^*)$ of algebras. Define
 $$\Delta_{A\bowtie A^*}(u)=(\id\otimes L_{A\bowtie A^*}(u)-R_{A\bowtie A^*}(u)\otimes
 \id)(r),\;\;\forall u\in A\bowtie A^*.$$
Moreover, $(A\bowtie A^*, P+Q^*)$ is a $(Q+P^*)$-admissible
Rota-Baxter algebra by Lemma~\mref{lem:abas}. Hence
Eqs.~(\mref{eq:pduqr}) and (\mref{eq:pduql}) hold.
Since
\vspace{-.3cm}
\small{
 \begin{eqnarray*}
((P+Q^*)\otimes \id- \id\otimes (Q+ P^*))(r)
 &=&\sum^n_{i=1}(P(e_i)\otimes e^i-e_i\otimes
 P^*(e^i))\stackrel{}{=}0,\\
((Q+P^*)\otimes \id- \id\otimes (P+ Q^*))(r)
\vspace{-.3cm}
 &=&\sum^n_{i=1}(Q(e_i)\otimes e^i-e_i\otimes Q^*(e^i))\stackrel{}{=}0,
 \end{eqnarray*}
}
Eqs.~(\mref{eq:corbo})--(\mref{eq:P*admissible2}) hold. By
\emph{\cite[Theorem 2.3.6]{Bai1}}, we know that $r$ satisfies
Eqs.~(\mref{eq:4.9}) and (\mref{eq:4.2}), and  $(A\bowtie
A^*,\star, \Delta_{A\bowtie A^*})$ is an \asi bialgebra containing $(A,\cdot,\Delta)$ and
$(A^*,\circ,-\delta)$ as \asi sub-bialgebras. Therefore
$((A\bowtie A^*, P+Q^*), \star, \Delta_{A\bowtie A^*}, Q+P^*)$ is
a Rota-Baxter \asi bialgebra. It is obvious that it contains
$((A,P),\Delta,Q)$ and $((A^*,Q^*),-\delta,P^*)$ as Rota-Baxter
\asi sub-bialgebras. This completes the proof.
\end{proof}
\subsection{Admissible associative Yang-Baxter equation in a Rota-Baxter algebra}
\mlabel{ss:ybe}
As a consequence of Corollary~\mref{thm:4.6a}, we obtain

 \begin{cor}   \mlabel{cor:4.10}
 Let $(A, P)$ be a $Q$-admissible Rota-Baxter algebra and $r\in A\otimes A$.  Then the linear map $\Delta$ defined by Eq.~\eqref{eq:4.1} induces a
 $P^*$-admissible Rota-Baxter algebra $(A^*, \Delta^*, Q^*)$ such that $((A, P), \Delta, Q)$ is a Rota-Baxter  \asi bialgebra
 if Eq.~$($\ref{eq:4.9}$)$ and the following equations hold:
\vspace{-.2cm}
\begin{eqnarray}\mlabel{eq:4.10}
 &r_{12}r_{13}+r_{13}r_{23}-r_{23}r_{12}=0,&
 \\
 \mlabel{eq:4.11}
& (P\otimes \id-\id\otimes Q)(r)=0,&
\\
\mlabel{eq:4.11a}
& (Q\otimes \id-\id\otimes P)(r)=0.&
\end{eqnarray}
\end{cor}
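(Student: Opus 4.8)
The plan is to derive the statement as an immediate specialization of Corollary~\mref{thm:4.6a}, which already records that --- for a $Q$-admissible Rota-Baxter algebra $(A,P)$ and $r\in A\otimes A$ --- the map $\Delta$ of Eq.~\meqref{eq:4.1} produces a $P^*$-admissible Rota-Baxter algebra $(A^*,\Delta^*,Q^*)$ making $((A,P),\Delta,Q)$ a Rota-Baxter \asi bialgebra precisely when Eqs.~\meqref{eq:4.9}, \meqref{eq:4.2}, \meqref{eq:corbo}, \meqref{eq:P*admissible1} and \meqref{eq:P*admissible2} all hold. So the task reduces to checking that the present hypotheses --- Eq.~\meqref{eq:4.9} together with Eqs.~\meqref{eq:4.10}, \meqref{eq:4.11} and \meqref{eq:4.11a} --- force each of those five equations.

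First I would note that Eq.~\meqref{eq:4.9} is among the hypotheses verbatim. Next, Eq.~\meqref{eq:4.2} is obtained by applying the linear map $\id\otimes\id\otimes L(a)-R(a)\otimes\id\otimes\id$ to the element $r_{12}r_{13}+r_{13}r_{23}-r_{23}r_{12}$, which vanishes by Eq.~\meqref{eq:4.10}; hence Eq.~\meqref{eq:4.2} holds. Finally, inspecting the right-hand sides in Theorem~\mref{thm:pq}, each of Eqs.~\meqref{eq:corbo}, \meqref{eq:P*admissible1} and \meqref{eq:P*admissible2} is a linear operator depending on $a$ applied to one of the tensors $(P\otimes\id-\id\otimes Q)(r)$ and $(Q\otimes\id-\id\otimes P)(r)$ --- in the case of Eq.~\meqref{eq:corbo}, the first summand is built on $(Q\otimes\id-\id\otimes P)(r)$ and the second on $(P\otimes\id-\id\otimes Q)(r)$; Eq.~\meqref{eq:P*admissible1} is built on $(P\otimes\id-\id\otimes Q)(r)$; and Eq.~\meqref{eq:P*admissible2} on $(Q\otimes\id-\id\otimes P)(r)$. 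Since Eqs.~\meqref{eq:4.11} and \meqref{eq:4.11a} assert that both tensors are zero, all three equations hold automatically. With all five conditions verified, Corollary~\mref{thm:4.6a} yields the conclusion.

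I do not anticipate any genuine obstacle here: the proof is essentially a substitution of the stronger tensor identities \meqref{eq:4.10}--\meqref{eq:4.11a} into the operator-level characterizations of Theorem~\mref{thm:pq} and Corollary~\mref{thm:4.6a}. The only point requiring care is the bookkeeping observation that the expressions in Eqs.~\meqref{eq:corbo}--\meqref{eq:P*admissible2} really do factor through $(P\otimes\id-\id\otimes Q)(r)$ and $(Q\otimes\id-\id\otimes P)(r)$ exactly as displayed, which is immediate from the statement of Theorem~\mref{thm:pq}. (Note that the converse implication fails in general --- passing to the operator equations is strictly weaker --- which is why the statement is phrased with ``if'' rather than ``if and only if''.)
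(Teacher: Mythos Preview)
Your proposal is correct and matches the paper's approach exactly: the paper presents this corollary simply as ``a consequence of Corollary~\mref{thm:4.6a}'' without further proof, and your argument spells out precisely why --- Eq.~\meqref{eq:4.10} forces Eq.~\meqref{eq:4.2}, while Eqs.~\meqref{eq:4.11} and \meqref{eq:4.11a} kill the tensors on which Eqs.~\meqref{eq:corbo}--\meqref{eq:P*admissible2} are built.
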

This leads us to the following variation of the associative Yang-Baxter equation.
\begin{defi}
Let $(A, P)$ be a Rota-Baxter algebra. Suppose that $r\in A\otimes
A$ and $Q:A\rightarrow A$ is a linear map. Then
Eq.~(\mref{eq:4.10}) with conditions given by
Eqs.~(\mref{eq:4.11}) and (\mref{eq:4.11a}) is called the {\bf
\qadm associative Yang-Baxter equation (AYBE) in $(A, P)$} or
simply the {\bf \qadm \aybe}.
 \mlabel{de:4.11}
 \end{defi}

\begin{rmk}\mlabel{rmk:4.10a}
Eq.~(\mref{eq:4.10}) is simply the associative
Yang-Baxter equation (AYBE) in an associative algebra~\cite{Ag2,Bai1}, as an analogue of the classical Yang-Baxter equation in a Lie algebra~\cite{D}. Also if $r$ is antisymmetric
$($that is, $r=-\sigma(r)$$)$, then Eq.~(\mref{eq:4.11}) holds if and only if Eq.~(\mref{eq:4.11a}) holds.
\end{rmk}

\begin{rmk} Continuing with the notations in Example~\ref{ex:cobo},
we make an observation that distinguishes the admissible AYBE from the AYBE.
Suppose that $r_1$ and $r_2$ are distinct skew-symmetric solutions of the AYBE in the algebras $A$ and $B$ respectively. Then $r=r_1+r_2$ is a skew-symmetric solution of the AYBE in
$A\oplus B$. However, $r$ is not a solution of the $P_B$-admissible AYBE in
the Rota-Baxter algebra $(A\oplus B, P_A)$ since in this case
$(P_A\otimes \id_{A\oplus B}-\id_{A\oplus B}\otimes P_B)(r)=r_1-r_2$ is nonzero.
\end{rmk}

By Corollary~\mref{cor:4.10}, we have the following conclusion.

 \begin{cor}  \mlabel{rmk:4.12}
Let $(A, P)$ be a $Q$-admissible Rota-Baxter algebra and $r\in A\otimes A$ an antisymmetric solution of the \qadm \aybe in $(A, P)$. Then $((A, P), \Delta, Q)$ is  a Rota-Baxter \asi bialgebra,
 where the linear map $\Delta=\Delta_r$ is defined by Eq.~\meqref{eq:4.1}.
 \end{cor}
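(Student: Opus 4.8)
The plan is to deduce this statement directly from Corollary~\mref{cor:4.10}, so the work reduces to verifying that the hypotheses of that corollary---namely Eqs.~\meqref{eq:4.9}, \meqref{eq:4.10}, \meqref{eq:4.11} and \meqref{eq:4.11a}---are all in force under the present assumptions. Two groups of these are immediate: the AYBE in \meqref{eq:4.10} together with the compatibility conditions \meqref{eq:4.11} and \meqref{eq:4.11a} are, by Definition~\mref{de:4.11}, precisely the content of the assumption that $r$ is a solution of the \qadm \aybe in $(A,P)$. So nothing needs to be checked for these.

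The only remaining point is Eq.~\meqref{eq:4.9}. Here I would invoke the hypothesis that $r$ is antisymmetric, that is, $r=-\sigma(r)$, equivalently $r+\sigma(r)=0$. Since the left-hand side of \meqref{eq:4.9} is a linear operator applied to the tensor $r+\sigma(r)$, it vanishes identically, and so \meqref{eq:4.9} holds automatically. (One could alternatively note, following Remark~\mref{rmk:4.10a}, that antisymmetry of $r$ makes \meqref{eq:4.11} and \meqref{eq:4.11a} equivalent, but since both are already part of the hypothesis this observation is not needed.)

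With Eqs.~\meqref{eq:4.9}--\meqref{eq:4.11a} all established, Corollary~\mref{cor:4.10} applies directly: the map $\Delta=\Delta_r$ defined by Eq.~\meqref{eq:4.1} induces a $P^*$-admissible Rota-Baxter algebra $(A^*,\Delta^*,Q^*)$ and $((A,P),\Delta,Q)$ is a Rota-Baxter \asi bialgebra. I do not expect any real obstacle here; the proof is a short bookkeeping exercise matching the hypotheses of Corollary~\mref{cor:4.10} to the definition of the \qadm \aybe, the single genuine (and entirely routine) input being that antisymmetry of $r$ kills the symmetrized tensor $r+\sigma(r)$ appearing in \meqref{eq:4.9}.
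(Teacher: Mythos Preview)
Your proposal is correct and follows exactly the paper's approach: the paper simply states that the result follows from Corollary~\mref{cor:4.10}, and your argument spells out precisely why the hypotheses of that corollary are met (the \qadm \aybe gives Eqs.~\meqref{eq:4.10}--\meqref{eq:4.11a}, and antisymmetry of $r$ forces $r+\sigma(r)=0$, so Eq.~\meqref{eq:4.9} holds trivially).
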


We now study solutions of the \qadm \aybe. For a vectors space
$A$, the isomorphism $A\ot A\cong \Hom(A^*,K)\ot A\cong
\Hom(A^*,A)$ identifies an $r\in A\otimes A$ with a map from
$A^*$ to $A$ which we still denote by $r$. Explicitly, writing
$r=\sum_{i}a_i\otimes b_i$, then
\vspace{-.2cm}
\begin{equation}\mlabel{eq:4.12}
r:A^*\to A, \quad
r(a^*)=\sum_{i}\langle a^*, a_i
\rangle b_i, ~~\forall a^*\in A^*. \vspace{-.4cm}
\end{equation}
 We called $r\in A\otimes A$ {\bf nondegenerate} if the map $r: A^*\to  A$ defined by Eq.~(\mref{eq:4.12}) is bijective.

 \begin{thm} \mlabel{thm:4.14}
 Let $(A, \cdot, P)$ be a Rota-Baxter algebra and $r\in A\otimes A$ antisymmetric. Let $Q:A\rightarrow A$ be a linear map. Then $r$ is a solution of the \qadm \aybe in $(A, \cdot, P)$ if and only if $r$
 satisfies
 \begin{equation}\mlabel{eq:4.161}
 r(a^*)\cdot r(b^*)=r({R_A}^*(r(a^*))b^*+a^*{L_A}^*(r(b^*))), ~~\forall a^*, b^*\in A^*,
 \end{equation}
 \begin{equation}\mlabel{eq:4.16}
 P r=r Q^*.
 \end{equation}
 \end{thm}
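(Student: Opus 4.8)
The plan is to separate the three defining relations of the \qadm \aybe and translate each one through the identification of $r$ with the map $r:A^*\to A$ of Eq.~\eqref{eq:4.12}. Since $r$ is antisymmetric, Remark~\ref{rmk:4.10a} already shows that Eq.~\eqref{eq:4.11a} follows from Eq.~\eqref{eq:4.11}, so a solution of the \qadm \aybe is exactly an antisymmetric $r$ satisfying the ordinary AYBE~\eqref{eq:4.10} together with $(P\otimes\id-\id\otimes Q)(r)=0$. I will match the first of these with Eq.~\eqref{eq:4.161} and the second with Eq.~\eqref{eq:4.16}.

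For the AYBE itself, write $r=\sum_i a_i\otimes b_i$ and expand $r_{12}r_{13}+r_{13}r_{23}-r_{23}r_{12}$ using the explicit formulas for the three summands recorded in Remark~\ref{rmk:4.2}. Pairing the first two tensor legs against arbitrary $a^*,b^*\in A^*$ and rewriting the internal products $a_ia_j$ and $a_ib_j$ by means of the dual actions ${R_A}^*$ and ${L_A}^*$ from Eq.~\eqref{eq:2.5} turns the vanishing of this element into the identity $r(a^*)\cdot r(b^*)=r({R_A}^*(r(a^*))b^*+a^*{L_A}^*(r(b^*)))$; the antisymmetry $\sigma(r)=-r$ is precisely what lets the term coming from $r_{23}r_{12}$ be absorbed into this form. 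This is the associative analogue of the well-known correspondence between antisymmetric solutions of the CYBE and $\mathcal O$-operators, and may be quoted from~\cite{Bai1}; alternatively the short computation just indicated can be written out. The only delicate point is the bookkeeping of the two independent summation indices, which is routine.

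For the operator condition, observe that under the identification of Eq.~\eqref{eq:4.12} the tensor $(P\otimes\id)(r)$ corresponds to the composite $r\circ P^*$ and $(\id\otimes Q)(r)$ to $Q\circ r$, so Eq.~\eqref{eq:4.11} is equivalent to $rP^*=Qr$. Next, identifying $A^{**}$ with $A$, the transpose of the map $r:A^*\to A$ is the map attached to $\sigma(r)$, which equals $-r$ because $r$ is antisymmetric; taking transposes of $rP^*=Qr$ therefore yields $Pr=rQ^*$, and since transposition is an involution the argument is reversible, so Eq.~\eqref{eq:4.11} is equivalent to Eq.~\eqref{eq:4.16}. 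Combining this with the equivalence of the previous paragraph and with Remark~\ref{rmk:4.10a} completes the proof. The main obstacle is the first equivalence; once the AYBE is recognized as the $\mathcal O$-operator relation for the bimodule $(A^*,{R_A}^*,{L_A}^*)$, the operator part is a formal transpose computation.
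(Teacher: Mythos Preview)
Your proposal is correct and follows essentially the same approach as the paper: both cite \cite{Bai1} for the equivalence between the AYBE~\eqref{eq:4.10} and Eq.~\eqref{eq:4.161}, and both translate the operator condition through the identification of $r$ with a linear map $A^*\to A$. The only minor difference is that the paper computes $r(Q^*(a^*))=\sum_i\langle a^*,Q(a_i)\rangle b_i$ and $P(r(a^*))=\sum_i\langle a^*,a_i\rangle P(b_i)$ directly from Eq.~\eqref{eq:4.12} to obtain the equivalence with Eq.~\eqref{eq:4.11} in one step, whereas you first read off $rP^*=Qr$ and then transpose using antisymmetry to reach $Pr=rQ^*$ --- a small detour, but entirely valid.
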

\vspace{-.3cm}
 \begin{proof} By \cite[Proposition 2.4.7]{Bai1}, $r$ is a solution of the \aybe in $(A,
 \cdot)$ if and only if Eq.~(\mref{eq:4.161}) holds. Moreover, let
 $r=\sum_ia_i\otimes b_i$ and for $a^*\in A^*$, we have
 $$
 r(Q^*(a^*))\stackrel{}{=}\sum_{i}\langle Q^*(a^*), a_i \rangle b_i= \sum_{i}\langle a^*, Q(a_i) \rangle b_i, \quad P(r(a^*))\stackrel{}{=}\sum_{i}\langle a^*, a_i \rangle P(b_i).
 $$
 So $Pr=r   Q^*$ if and only if Eq.~(\mref{eq:4.11}) holds. This completes the proof.   \end{proof}

We next relate the admissible \aybe to Connes cocycles and Frobenius algebras.
 \begin{defi} \mlabel{de:1.5}
 {\rm \mcite{Bai1}}  An antisymmetric bilinear form $\omega: A\otimes A\to  K$ on an algebra $A$ is a {\bf cyclic 1-cocycle} in the sense of Connes~\mcite{Co}, or simply a {\bf Connes cocycle}, if
 \begin{equation}\mlabel{eq:1.4}
 \omega(a b, c)+\omega(b c, a)+\omega(c a, b)=0,  ~~\forall~a, b, c\in A.
 \end{equation}
 \end{defi}

Let $(A, P)$ be a Rota-Baxter algebra and $\omega: A\otimes
A\to  K$ be a bilinear form. Suppose that $\omega$ is a
nondegenerate Connes cocycle on $A$. Define $\hat P:A\to  A$ to be the (right) adjoint linear transformation of $P$ with respect to $\omega$:
\begin{equation}
\omega(P(a), b)=\omega(a, \hat P(b)),\;\;\forall a,b\in A.
\mlabel{eq:adjoint-skew}
\end{equation}

 \begin{pro}Let $(A, P)$ be a Rota-Baxter algebra. Let
$r\in A\otimes A$ and $Q:A\rightarrow A$ be a linear map.
Suppose that $r$ is antisymmetric and nondegenerate. Let $\omega$ be the bilinear form defined by the inverse $r^{-1}:A\to A^*$ of the linear bijection corresponding to $r$:
$$\omega(a, b):=\langle r^{-1}(a), b \rangle, \quad \forall a, b\in A. $$
Then $r$ is
a solution of the \qadm \aybe in $(A, P)$ if and only if $\omega$ gives a nondegenerate Connes cocycle  $(A, \omega)$ with respect to which $Q$ is the adjoint $\hat P$  of $P$.
 \mlabel{pro:4.16}
 \end{pro}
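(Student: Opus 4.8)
The plan is to invoke Theorem~\mref{thm:4.14} to split the \qadm \aybe into its ``classical'' part, Eq.~(\mref{eq:4.161}), and its Rota-Baxter part, Eq.~(\mref{eq:4.16}), and then to analyze these two conditions separately. Since $r$ is antisymmetric, Theorem~\mref{thm:4.14} says that $r$ is a solution of the \qadm \aybe in $(A,\cdot,P)$ if and only if $r$ satisfies \emph{both} Eq.~(\mref{eq:4.161}) and the identity $Pr=rQ^{*}$ of Eq.~(\mref{eq:4.16}). Hence it suffices to establish two equivalences: (a) Eq.~(\mref{eq:4.161}) holds if and only if $\omega$ is a nondegenerate Connes cocycle on $A$; and (b) under the standing nondegeneracy of $\omega$, the identity $Pr=rQ^{*}$ holds if and only if $\hat P=Q$, where $\hat P$ is the adjoint of $P$ with respect to $\omega$ given by Eq.~(\mref{eq:adjoint-skew}).

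For (a): nondegeneracy of $\omega$ is automatic, since $r^{-1}:A\to A^{*}$ is a linear bijection, and the antisymmetry of $\omega$ is inherited from that of $r$ (pairing $r=-\sigma(r)$ against $a^{*}\otimes b^{*}$ and substituting $a=r(a^{*})$, $b=r(b^{*})$ yields $\omega(b,a)=-\omega(a,b)$). The equivalence of Eq.~(\mref{eq:4.161}) with the cyclic relation Eq.~(\mref{eq:1.4}) defining a Connes cocycle is the purely associative statement proved in~\mcite{Bai1}: pairing Eq.~(\mref{eq:4.161}) against a third functional and substituting $a=r(a^{*})$, $b=r(b^{*})$, $c=r(c^{*})$ turns it precisely into $\omega(ab,c)+\omega(bc,a)+\omega(ca,b)=0$. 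I would quote this result rather than redo the computation.

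For (b): starting from the defining relation $\omega(P(a),b)=\omega(a,\hat P(b))$ and using $\omega(x,y)=\langle r^{-1}(x),y\rangle$, the left-hand side is $\langle r^{-1}(P(a)),b\rangle$ and the right-hand side is $\langle \hat P^{*}(r^{-1}(a)),b\rangle$; since this must hold for all $b\in A$ and $r^{-1}$ is surjective, it is equivalent to $r^{-1}P=\hat P^{*}r^{-1}$, hence, applying $r$, to $Pr=r\hat P^{*}$. Comparing with $Pr=rQ^{*}$ and cancelling the bijection $r:A^{*}\to A$ yields $\hat P^{*}=Q^{*}$, which in the finite-dimensional setting is equivalent to $\hat P=Q$ by taking transposes and using $A^{**}=A$. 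Combining (a) and (b) with Theorem~\mref{thm:4.14} completes the proof.

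The argument is essentially bookkeeping; the only place that wants care is the transpose/double-dual manipulation in (b), and in particular the observation that the adjoint in Eq.~(\mref{eq:adjoint-skew}) is taken in the \emph{right} argument, so that it is precisely $\hat P$ --- and not a left-adjoint variant --- that is forced to coincide with $Q$. The one piece of genuine mathematical content, the AYBE--Connes-cocycle correspondence underlying (a), is imported from~\mcite{Bai1}.
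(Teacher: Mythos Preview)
Your proposal is correct and follows essentially the same approach as the paper: quote the known equivalence between the AYBE and the nondegenerate Connes cocycle for part~(a), and verify directly that Eq.~(\mref{eq:4.16}) is equivalent to $\hat P=Q$ for part~(b). The paper cites \mcite{Ag2} rather than \mcite{Bai1} for (a) and computes $\omega(P(a),b)-\omega(a,Q(b))$ directly rather than first deriving $Pr=r\hat P^{*}$ and then comparing, but these are cosmetic differences.
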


 \begin{proof} By \emph{\cite[Proposition 2.1]{Ag2}}, we only need to check that Eq.~(\mref{eq:4.11}) holds if and only if the adjoint $\hat P$  of $P$ with respect to $\omega$ is $Q$. But the latter holds if and only if for all $a, b\in A$,
$$
0=\omega(P(a), b)-\omega(a, Q(b))= \langle r^{-1}  P(a), b \rangle -\langle r^{-1}(a), Q(b) \rangle
=\langle r^{-1}  P(a), b \rangle -\langle Q^*  r^{-1}(a), b \rangle,$$
which holds if and only if  $r^{-1}  P=Q^*  r^{-1}$, which means $P  r=r  Q^*$, that is, $(P\otimes \id-\id\otimes
 Q)(r)=0$.
This completes the proof.
 \end{proof}

Now let $(A, P, \mathfrak{B})$ be a  Rota-Baxter   symmetric
Frobenius algebra of weight zero. Then under the natural bijection
$\Hom(A\ot A,K)\cong \Hom(A,A^*)$, the bilinear form $\frakB$
corresponds to the linear map (see also the
proof of Proposition~\ref{pp:frobadm})
 $$
 \phi: A\to  A^*,~~\langle \phi(a), b \rangle:=\mathfrak{B}(a, b),~~\forall a, b\in A.
 $$
By pre-composing, we obtain a bijection
\begin{equation} \mlabel{eq:4.19}
 \Hom(A^*,A) \to \Hom(A,A), r\mapsto P_r:=r\phi, \quad \forall r\in \Hom(A^*,A).
 \end{equation}
\begin{thm}
Let $(A, P, \mathfrak{B})$ be a  Rota-Baxter   symmetric Frobenius algebra of weight zero. Then an antisymmetric $r\in A\otimes A$ is a solution of the $\hat P$-admissible \aybe in $(A, P)$ if and only if the corresponding $P_r$ from Eq.~\eqref{eq:4.19} is a Rota-Baxter operator of weight zero on $A$ such that $P  P_r=P_r  P$.
\mlabel{thm:4.23}
 \end{thm}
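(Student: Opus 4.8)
The plan is to translate both sides of the claimed equivalence into statements about the linear map $r:A^*\to A$ and the linear map $\phi:A\to A^*$, and then observe that the $\hat P$-admissible \aybe is exactly the system of three conditions analyzed in Theorem~\mref{thm:4.14} and Proposition~\mref{pro:4.16}. First I would recall from Theorem~\mref{thm:4.14} that, since $r$ is antisymmetric and the weight is zero, $r$ solves the $\hat P$-admissible \aybe in $(A,P)$ if and only if $r$ satisfies the ``$\calo$-operator type'' identity
\begin{equation*}
 r(a^*)\cdot r(b^*)=r\big({R_A}^*(r(a^*))b^*+a^*{L_A}^*(r(b^*))\big),\quad \forall a^*,b^*\in A^*,
\end{equation*}
together with $Pr=r\hat P^*$; here $Q=\hat P$ by the setup, and by Remark~\mref{rmk:4.10a} the two conditions Eqs.~\meqref{eq:4.11} and \meqref{eq:4.11a} coincide for antisymmetric $r$, so nothing is lost. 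The second condition $Pr=r\hat P^*$ is the one that will encode the commutation $PP_r=P_rP$, while the first condition is the one that will encode the Rota-Baxter identity for $P_r$.

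Next I would unwind $P_r=r\phi$ using the defining relation $\langle\phi(a),b\rangle=\frakB(a,b)$ and the invariance and symmetry of $\frakB$. The key computational point is that $\phi$ intertwines the adjoint representation with its dual in the manner already spelled out in the proof of Proposition~\mref{pp:frobadm}: $\phi(L(a)b)={R_A}^*(a)\phi(b)$ and $\phi(aR(b))=\phi(a){L_A}^*(b)$, and, crucially for the weight-zero Rota-Baxter algebra $(A,P,\frakB)$, the fact that $\hat P$ is \admt $(A,P)$ (Proposition~\mref{pp:frobadm}) so that $\phi\hat P=\hat P^*\phi$ — or rather $\phi P=\hat P^*\phi$ by the definition of the adjoint in Eq.~\meqref{eq:adjoint}. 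With these identities in hand, the statement $Pr=r\hat P^*$ becomes $Pr\phi=r\phi P$, i.e. $PP_r=P_rP$, after cancelling the bijection $\phi$; this handles the ``commutation'' half of the equivalence cleanly in both directions.

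For the main half — that the $\calo$-operator identity for $r$ is equivalent to $P_r$ being a Rota-Baxter operator of weight zero — I would substitute $a^*=\phi(x)$, $b^*=\phi(y)$ for $x,y\in A$ (legitimate since $\phi$ is a bijection) into the displayed identity, rewrite $r(\phi(x))=P_r(x)$, $r(\phi(y))=P_r(y)$, and push the ${R_A}^*$ and ${L_A}^*$ through $\phi$ using the intertwining relations above to turn ${R_A}^*(r(a^*))\phi(x)$ into $\phi$ of something involving $P_r(y)\cdot x$ and $x\cdot P_r(y)$. The identity then reads $P_r(x)\cdot P_r(y)=P_r\big(P_r(x)\cdot y + x\cdot P_r(y)\big)$ for all $x,y\in A$, which is precisely Eq.~\meqref{eq:1.1} with $\lambda=0$. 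Since every manipulation is an equivalence (using only bijectivity of $\phi$, invariance and symmetry of $\frakB$, and the definitions), reading the chain backwards gives the converse. I would then combine the two halves: $r$ solves the $\hat P$-admissible \aybe iff ($P_r$ is Rota-Baxter of weight zero) and ($PP_r=P_rP$), which is the assertion.

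The main obstacle I anticipate is purely bookkeeping rather than conceptual: keeping straight which of ${L_A}^*,{R_A}^*$ goes with which of the left/right multiplications after transport through $\phi$, and making sure the symmetry of $\frakB$ is invoked at exactly the right spot so that ${R_A}^*(r(a^*))b^*$ really does correspond to a \emph{left} multiplication $P_r(y)\cdot x$ under $\phi^{-1}$ (the ``swap'' between left and right that already appears in Proposition~\mref{pp:frobadm}). One should also double-check that the antisymmetry hypothesis on $r$ is genuinely used only to collapse Eqs.~\meqref{eq:4.11}–\meqref{eq:4.11a} to a single condition, so that the weight-zero hypothesis on both $(A,P)$ and on $P_r$ match up; there is no hidden use of antisymmetry in the Rota-Baxter identity itself, which is reassuring. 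Everything else is a direct unwinding of definitions already established in the excerpt.
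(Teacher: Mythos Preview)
Your proposal is correct and follows essentially the same decomposition as the paper: split the $\hat P$-admissible \aybe into the \aybe part (Eq.~\meqref{eq:4.161}) and the compatibility $Pr=r\hat P^*$, and show these are equivalent respectively to $P_r$ being Rota-Baxter of weight zero and to $PP_r=P_rP$. The only difference is that the paper outsources the first equivalence to \cite[Corollary~3.17]{BGN1}, whereas you prove it directly by substituting $a^*=\phi(x)$, $b^*=\phi(y)$ and using the intertwinings $\phi L(a)=R^*(a)\phi$, $\phi R(b)=L^*(b)\phi$; your version is more self-contained but otherwise the same argument.
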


 \begin{proof} By \cite[Corollary 3.17]{BGN1}, $r$ is a solution of AYBE in
the algebra $A$, that is, Eq.~(\mref{eq:4.161}) holds, if and only
if $P_r$ is a Rota-Baxter operator of weight zero on $A$.

Moreover, set $r=\sum_ia_i\otimes b_i$. For $a\in A$, we have {\small
\begin{eqnarray*}
P  P_r(a)&=&P  r(\phi(a)), \\
P_r  P(a)&=&r(\phi(
P(a)))=\sum_{i}\mathfrak{B}(P(a),a_i)b_i =\sum_{i}\mathfrak{B}(a,
\hat P(a_i))b_i\\
&=&\sum_i\langle \phi(a), \hat P(a_i)\rangle b_i=r
\hat P^*(\phi(a)).
\end{eqnarray*}
}
\vspace{-.2cm}
Thus $P  r=r  \hat P^*$ if and only if $P  P_r=P_r  P$. \end{proof}

\begin{cor} Let $(A, P, \mathfrak{B})$ be a Rota-Baxter  symmetric  Frobenius algebra of weight zero and $r\in A\otimes A$ antisymmetric. If $(A, P_r)$ is a Rota-Baxter algebra of weight zero and $P  P_r=P_r  P$, where $P_r$ is defined by  Eq.~$($\mref{eq:4.19}$)$, then $((A,P), \Delta, \hat P)$ is a Rota-Baxter \asi  bialgebra, where $\Delta=\Delta_r$ is given by Eq.~$($\mref{eq:4.1}$)$.
In particular, if $\hat P=-P$, then for the inverse image $r=r_P$ of $P$ under the bijection in Eq.~\meqref{eq:4.19}, the triple $((A,P), \Delta_r, -P)$ is a Rota-Baxter \asi  bialgebra.
\mlabel{cor:4.25}
 \end{cor}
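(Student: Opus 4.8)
The plan is to derive Corollary~\ref{cor:4.25} as a direct consequence of Theorem~\ref{thm:4.23} together with Corollary~\ref{rmk:4.12}. First I would record the input: $(A,P,\frakB)$ is a Rota-Baxter symmetric Frobenius algebra of weight zero and $r\in A\otimes A$ is antisymmetric, with $P_r=r\phi$ the operator attached to $r$ via Eq.~\meqref{eq:4.19}. The hypothesis that $(A,P_r)$ is a Rota-Baxter algebra of weight zero and that $P\circ P_r=P_r\circ P$ is precisely the right-hand condition in the equivalence of Theorem~\ref{thm:4.23}; hence that theorem immediately gives that $r$ is a solution of the $\hat P$-admissible \aybe in $(A,P)$. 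I would note here that Theorem~\ref{thm:4.23} is being applied with $Q=\hat P$, so no new definition of $Q$ is needed.

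The second step is to feed this into the coboundary machinery. By Proposition~\ref{pp:frobadm} (applied to the symmetric Frobenius algebra $(A,P,\frakB)$), $\hat P$ is \admt $(A,P)$; equivalently, $(A,P)$ is $\hat P$-admissible in the sense of Definition~\ref{de:admop}. Thus the triple $(A,P)$, the antisymmetric element $r$, and the operator $Q=\hat P$ satisfy exactly the hypotheses of Corollary~\ref{rmk:4.12}: $(A,P)$ is $\hat P$-admissible and $r$ is an antisymmetric solution of the $\hat P$-admissible \aybe. Corollary~\ref{rmk:4.12} then yields that $((A,P),\Delta_r,\hat P)$ is a Rota-Baxter \asi bialgebra with $\Delta_r$ given by Eq.~\meqref{eq:4.1}. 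This establishes the first assertion.

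For the ``in particular'' clause, I would specialize to the case $\hat P=-P$. Here the claim is that if one takes $r=r_P$ to be the inverse image of $P$ under the bijection of Eq.~\meqref{eq:4.19} — that is, $P=P_{r_P}=r_P\phi$ — then $((A,P),\Delta_{r_P},-P)$ is a Rota-Baxter \asi bialgebra. The point is just to check that $r=r_P$ satisfies the hypotheses of the first part with this particular choice. With $\hat P=-P$, the commutation condition $P\circ P_r=P_r\circ P$ becomes $P\circ P=P\circ P$, which is trivially true; and $(A,P_r)=(A,P)$ is a Rota-Baxter algebra of weight zero by assumption on $(A,P,\frakB)$. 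The one genuine subtlety — and the main thing to verify carefully — is that $r_P$ is antisymmetric: this uses that $\frakB$ is symmetric and that $\hat P=-P$, i.e. $P$ is skew-adjoint with respect to $\frakB$, so that the bilinear form $b\mapsto \frakB(P(a),b)$ is antisymmetric in $a,b$, which is exactly what makes the corresponding element $r_P\in A\otimes A$ antisymmetric. Once antisymmetry is in hand, the first part applies verbatim and delivers the bialgebra $((A,P),\Delta_{r_P},-P)$.

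The main obstacle, such as it is, is bookkeeping rather than mathematics: one must be careful that the operator playing the role of $Q$ is consistently $\hat P$ throughout (in the general statement) and $-P=\hat P$ in the special case, and that the bijection Eq.~\meqref{eq:4.19} is being used in the correct direction — from operators to tensors for $r_P$, and from tensors to operators for $P_r$. I expect no computation heavier than unwinding the definitions of $\phi$, $P_r$, and the adjoint $\hat P$, since all the analytic content has already been packaged into Theorem~\ref{thm:4.23}, Proposition~\ref{pp:frobadm}, and Corollary~\ref{rmk:4.12}.
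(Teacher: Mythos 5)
Your proposal is correct and follows essentially the same route as the paper: Theorem~\ref{thm:4.23} converts the hypotheses on $P_r$ into the statement that $r$ solves the $\hat P$-admissible \aybe, Proposition~\ref{pp:frobadm} supplies the $\hat P$-admissibility of $(A,P)$, and Corollary~\ref{rmk:4.12} then yields the bialgebra; for the special case you correctly identify the antisymmetry of $r_P$ (via $\frakB(P(a),b)+\frakB(a,P(b))=\frakB(a,(\hat P+P)(b))=0$ when $\hat P=-P$) as the one computation to check, exactly as the paper does.
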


 \begin{proof} Note that $(A,P)$ is a $\hat P$-admissible Rota-Baxter algebra due to Proposition~\mref{pp:frobadm}. Hence the first conclusion follows from Corollary~\mref{rmk:4.12} and Theorem~\mref{thm:4.23}.

Now suppose $\hat P=-P$. Note that the element $r_P\in \Hom(A^*,A)$ corresponding to $P$ under the bijection in Eq.~\eqref{eq:4.19} is defined by
\begin{equation}
    r_P(a^*):=P(\phi^{-1}(a^*)),\;\;\forall a^*\in A^*.
\end{equation}
Then, for $a^*, b^*\in A^*$,
\begin{eqnarray*}
\langle r_P(a^*), b^*\rangle+\langle a^*,r_P(b^*)\rangle&=&
\langle P(\phi^{-1}(a^*)), b^*\rangle+\langle a^*,
P(\phi^{-1}(b^*))\rangle\\
&=&\frakB(P(\phi^{-1}(a^*)), \phi^{-1}(b^*))
+\frakB(\phi^{-1}(a^*),P(\phi^{-1}(b^*)))\\&=&\frakB(\phi^{-1}(a^*),
\hat
P(\phi^{-1}(b^*)))+\frakB(\phi^{-1}(a^*),P(\phi^{-1}(b^*)))=0.
\end{eqnarray*}
Hence $r_P$ (or its corresponding $2$-tensor) is antisymmetric. Since $P_{r_P}=P$ under the bijection in Eq.~\eqref{eq:4.19}, the second conclusion
holds from the first conclusion. 
\end{proof}

\subsection{$\mathcal{O}$-operators on Rota-Baxter algebras}
\mlabel{ss:oop}
The importance of Theorem \mref{thm:4.14} leads us to the next notion.
 \begin{defi}
 Let $(A, P)$ be a Rota-Baxter algebra of weight $\lambda$. Let $(V, \ell, r)$ be a representation of the algebra $A$ and $\alpha:V\to V$ be a linear map. A linear map $T: V\to  A$ is called a {\bf weak $\mathcal{O}$-operator associated to
 $(V, \ell, r)$ and $\alpha$} if $T$ satisfies
 \begin{equation}\mlabel{eq:4.18}
 T(u)T(v)=T(\ell(T(u))v+u r(T(v))), \quad \forall u, v\in V,
 \end{equation}
 \begin{equation}\mlabel{eq:4.17}
 P  T=T  \alpha.
 \end{equation}
 If in addition, $(V,\ell,r,\alpha)$ is a representation of
 $(A,P)$, then $T$ is called an {\bf $\mathcal{O}$-operator associated to
 $(V, \ell, r,\alpha)$.}
 \mlabel{de:4.17}
 \end{defi}

 \begin{ex}
 \begin{enumerate}
 \item
 Let $(A, P)$ be a Rota-Baxter algebra of weight $\lambda$. Then the identity map $\id$ on $A$ is an $\mathcal{O}$-operator associated to $(A, L, 0, P)$ or $(A, 0, R, P)$.
 \mlabel{ex:4.18}
 \item
 Let $(A, P)$ be a Rota-Baxter algebra of weight zero. Then the Rota-Baxter operator  $P: A\to  A$ is an $\mathcal{O}$-operator associated to the representation $(A, L, R, P)$.
\mlabel{ex:4.19}
\end{enumerate}
\mlabel{ex:oop}
\end{ex}

Theorem \mref{thm:4.14} is rewritten in terms of $\mathcal
O$-operators as follows.

\begin{cor}
Let $(A, P)$ be a Rota-Baxter algebra and $r\in A\otimes A$
antisymmetric. Let $Q:A\to A$ be a linear map.
 Then $r$ is a solution of the \qadm \aybe in $(A, P)$ if and only if $r$ is a weak $\mathcal{O}$-operator associated to $(A^*, R^*, L^*)$ and $Q^*$.
If in addition, $(A, P)$ is a $Q$-admissible Rota-Baxter algebra,
then  $r$ is a solution of the \qadm \aybe in
$(A, P)$ if and only if $r$ is an $\mathcal{O}$-operator
associated to the representation $(A^*, R^*, L^*, Q^*)$.
 \mlabel{ex:4.20}
 \end{cor}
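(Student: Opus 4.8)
The plan is to obtain the statement as a direct translation of Theorem~\mref{thm:4.14} into the language of $\mathcal{O}$-operators, so that essentially no new computation is needed. First I would unwind Definition~\mref{de:4.17} for the specific data at hand: take $V=A^*$ with the dual representation $(A^*,R_A^*,L_A^*)$ of the algebra $A$ (so that, by Eq.~\meqref{eq:2.5}, the left action is $R_A^*$ and the right action is $L_A^*$), take the linear map $V\to A$ to be the map $r:A^*\to A$ associated to the tensor $r\in A\ot A$ via Eq.~\meqref{eq:4.12}, and take $\alpha=Q^*$. With these substitutions, the weak $\mathcal{O}$-operator identity Eq.~\meqref{eq:4.18} becomes exactly $r(a^*)\cdot r(b^*)=r\big(R_A^*(r(a^*))b^*+a^*L_A^*(r(b^*))\big)$, i.e.\ Eq.~\meqref{eq:4.161}, and the compatibility Eq.~\meqref{eq:4.17} becomes exactly $Pr=rQ^*$, i.e.\ Eq.~\meqref{eq:4.16}. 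Since Theorem~\mref{thm:4.14} characterizes solutions of the \qadm \aybe in $(A,\cdot,P)$ by precisely the conjunction of Eqs.~\meqref{eq:4.161} and \meqref{eq:4.16}, the first equivalence follows immediately.

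For the second equivalence, I would note that the only requirement for an $\mathcal{O}$-operator beyond a weak one is that the quadruple $(A^*,R_A^*,L_A^*,Q^*)$ be a representation of the Rota-Baxter algebra $(A,P)$. I would then invoke Lemma~\mref{lem:admrep} applied to the adjoint representation $(A,L,R)$ of $A$ with $\beta=Q$: the hypothesis that $(A,P)$ is $Q$-admissible is, by Definition~\mref{de:admop}, precisely the validity of Eqs.~\meqref{eq:it:2.3a} and \meqref{eq:it:2.3b} in this case, and Lemma~\mref{lem:admrep} says this is equivalent to $(A^*,R^*,L^*,Q^*)$ being a representation of $(A,P)$ (this is also recorded in Corollary~\mref{cor:pqadmin}). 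Hence, under this extra hypothesis, ``weak $\mathcal{O}$-operator associated to $(A^*,R^*,L^*)$ and $Q^*$'' and ``$\mathcal{O}$-operator associated to $(A^*,R^*,L^*,Q^*)$'' mean the same thing, and combining this with the first part gives the claim.

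I do not expect any genuine obstacle; the content of the corollary is bookkeeping on top of Theorem~\mref{thm:4.14}. The one place where care is warranted is the notational collision between the tensor $r\in A\ot A$, the linear map $r:A^*\to A$ it induces, and the symbol $r$ used for the right action in the definition of a representation. In writing out the argument I would make explicit which $r$ is meant at each occurrence — in particular that in $(A^*,R_A^*,L_A^*)$ the right action is $L_A^*$, not the map induced by the tensor — so that the match of Eq.~\meqref{eq:4.18} with Eq.~\meqref{eq:4.161} is unambiguous.
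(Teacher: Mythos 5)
Your proposal is correct and is exactly the argument the paper intends: the corollary is stated as a direct rewriting of Theorem~\ref{thm:4.14}, with Eq.~\eqref{eq:4.18} and Eq.~\eqref{eq:4.17} specializing to Eqs.~\eqref{eq:4.161} and \eqref{eq:4.16} for $T=r$, $(V,\ell,r)=(A^*,R^*,L^*)$, $\alpha=Q^*$, and the second equivalence following from Lemma~\ref{lem:admrep}/Corollary~\ref{cor:pqadmin}. Your attention to the clash between the tensor $r$, the induced map $r:A^*\to A$, and the right-action symbol $r$ is a sensible precaution but raises no actual issue.
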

We next show that $\mathcal O$-operators give numerous solutions of
the admissible \aybe in semi-direct product Rota-Baxter algebras and give rise to
Rota-Baxter \asi bialgebras.

We first consider admissible quadruples for semi-direct products
of Rota-Baxter algebras.

\begin{thm}
Let $(A,P)$ be a Rota-Baxter algebra of weight $\lambda$ and let
$(V,\ell,r)$ be a representation of the algebra $A$.
 Let $Q:A\to A$
and $\alpha,\beta:V\to V$ be linear maps. Then the following
conditions are equivalent.
\begin{enumerate}
\item There is a Rota-Baxter algebra
$(A\ltimes_{\ell,r}V,P+\alpha)$ such that
 the linear operator $Q+\beta$ on $A\oplus V$ is \admt
$(A\ltimes_{\ell,r}V,P+\alpha)$. \mlabel{it:admdual1} \item There
is a Rota-Baxter algebra $(A\ltimes_{r^*,\ell^*}V^*,P+\beta^*)$
such that the linear operator $Q+\alpha^*$ on $A\oplus V^*$ is
\admt $(A\ltimes_{r^*,\ell^*}V^*,P+\beta^*)$. \mlabel{it:admdual2}
\item The following conditions are satisfied:
\begin{enumerate}
\item \label{it:admdual3.1} $(V,\ell,r,\alpha)$ is a
representation of $(A,P)$, that is, Eqs.~$($\mref{eq:2.1}$)$ and
$($\mref{eq:2.2}$)$ hold; \item \label{it:admdual3.2} $Q$ is \admt
$(A,P)$, that is, Eqs.~$($\mref{eq:pduqr}$)$ and
$($\mref{eq:pduql}$)$ hold; \item \label{it:admdual3.3}
$(V,l,r,\beta)$ is an admissible quadruple of $(A,P)$, that is,
Eqs.~$($\mref{eq:it:2.3a}$)$ and $($\mref{eq:it:2.3b}$)$ hold;
\item \label{it:admdual3.4} For $a\in A,u\in
V$, we have
\begin{equation}\mlabel{eq:ab3-1}
\beta(\ell(a)\alpha(u))=\beta(\ell(Q(a))u)+\ell(Q(a))\alpha(u)+\lambda
\ell (Q(a))u,\end{equation}
\begin{equation}\mlabel{eq:ab3-2}
\beta(\alpha(u)r(a))=\beta(ur(Q(a)))+\alpha(u)r(Q(a))+\lambda
ur(Q(a)).
\end{equation}
\end{enumerate}
\mlabel{it:admdual3}
\end{enumerate}
\mlabel{thm:admdual}
\end{thm}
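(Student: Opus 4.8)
The plan is to prove $\eqref{it:admdual1}\Leftrightarrow\eqref{it:admdual3}$ by a direct expansion on the semi-direct product $A\ltimes_{\ell,r}V$, and then to obtain $\eqref{it:admdual2}\Leftrightarrow\eqref{it:admdual3}$ by applying that equivalence to the dual representation $(V^*,r^*,\ell^*)$ and translating the result with Lemma~\ref{lem:admrep}.

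For $\eqref{it:admdual1}\Leftrightarrow\eqref{it:admdual3}$: by Proposition~\ref{pro:2.2}, $(A\ltimes_{\ell,r}V,P+\alpha)$ is a Rota-Baxter algebra of weight $\lambda$ if and only if $(V,\ell,r,\alpha)$ is a representation of $(A,P)$, which is condition \eqref{it:admdual3.1}. Assuming this, Corollary~\ref{cor:pqadmin} says that $Q+\beta$ is \admt $(A\ltimes_{\ell,r}V,P+\alpha)$ exactly when Eqs.~\eqref{eq:pduqr} and \eqref{eq:pduql}, with $P$ and $Q$ there replaced by $P+\alpha$ and $Q+\beta$, hold on the algebra $A\oplus V$. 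I would substitute $a\mapsto a+u$ and $b\mapsto b+v$, expand via the product rule \eqref{eq:2.3}, and separate each resulting identity into its $A$-component and its $V$-component. The $A$-components of \eqref{eq:pduqr} and \eqref{eq:pduql} are precisely these two equations for the triple $(A,P,Q)$, i.e.\ condition \eqref{it:admdual3.2}. Each $V$-component, after moving all terms to one side, is the sum of a term involving only $v$ and a term involving only $u$, with no mixed term; hence it holds for all $u,v$ if and only if it holds at $u=0$ and at $v=0$ separately. Setting $v=0$ in the $V$-components of \eqref{eq:pduqr} and \eqref{eq:pduql} produces \eqref{eq:it:2.3a} and \eqref{eq:it:2.3b}, i.e.\ condition \eqref{it:admdual3.3}; setting $u=0$ produces \eqref{eq:ab3-1} and \eqref{eq:ab3-2}, i.e.\ condition \eqref{it:admdual3.4}. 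Assembling these equivalences gives $\eqref{it:admdual1}\Leftrightarrow\eqref{it:admdual3}$.

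For $\eqref{it:admdual2}\Leftrightarrow\eqref{it:admdual3}$: I would apply the equivalence just established to the dual representation $(V^*,r^*,\ell^*)$ of $A$, with $\beta^*$ playing the role of $\alpha$ and $\alpha^*$ playing the role of $\beta$; note that $\eqref{it:admdual1}$ for this data is exactly $\eqref{it:admdual2}$. This shows $\eqref{it:admdual2}$ is equivalent to the conjunction of: $(V^*,r^*,\ell^*,\beta^*)$ is a representation of $(A,P)$; $Q$ is \admt $(A,P)$; $(V^*,r^*,\ell^*,\alpha^*)$ is an admissible quadruple of $(A,P)$; and the analogues of \eqref{eq:ab3-1}--\eqref{eq:ab3-2} for this dual data. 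By Lemma~\ref{lem:admrep}, ``$(V^*,r^*,\ell^*,\beta^*)$ is a representation of $(A,P)$'' is precisely ``$(V,\ell,r,\beta)$ is admissible'', i.e.\ condition \eqref{it:admdual3.3}; and since the dual bimodule of $(V^*,r^*,\ell^*)$ is again $(V,\ell,r)$ (this is where the finite-dimensionality hypothesis enters, through $V^{**}\cong V$, $(\ell^*)^*=\ell$ and $(r^*)^*=r$), Lemma~\ref{lem:admrep} also shows ``$(V^*,r^*,\ell^*,\alpha^*)$ is admissible'' is precisely ``$(V,\ell,r,\alpha)$ is a representation of $(A,P)$'', i.e.\ condition \eqref{it:admdual3.1}. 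Finally, transposing the dual versions of \eqref{eq:ab3-1}--\eqref{eq:ab3-2} back to $V$ by means of the pairing relations \eqref{eq:2.5} recovers \eqref{eq:ab3-2}--\eqref{eq:ab3-1}, i.e.\ condition \eqref{it:admdual3.4}. Hence $\eqref{it:admdual2}\Leftrightarrow\eqref{it:admdual3}$, which completes the proof.

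The underlying computations are routine; the real effort is bookkeeping. The step I expect to be most delicate is checking, in the first part, that the $V$-components of \eqref{eq:pduqr} and \eqref{eq:pduql} genuinely decouple into a pure-$u$ and a pure-$v$ contribution, so that the two specializations $u=0$ and $v=0$ are jointly equivalent to the full identities; and, in the second part, keeping the transposes straight --- tracking which one-sided action becomes which under dualization and using that swapping the two sides of a bimodule twice is the identity --- so that the list obtained for the dual data matches \eqref{it:admdual3} exactly rather than a permutation of it.
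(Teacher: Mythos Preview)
Your proposal is correct and follows essentially the same route as the paper: first reduce $\eqref{it:admdual1}\Leftrightarrow\eqref{it:admdual3}$ via Proposition~\ref{pro:2.2} and a direct expansion of Eqs.~\eqref{eq:pduqr}--\eqref{eq:pduql} on $A\ltimes_{\ell,r}V$, then deduce $\eqref{it:admdual2}\Leftrightarrow\eqref{it:admdual3}$ by applying that equivalence to $(V^*,r^*,\ell^*)$ with $\alpha\leftrightarrow\beta^*$ and using Lemma~\ref{lem:admrep} and the pairing to translate back. One small bookkeeping correction: the four $V$-component identities do not sort as you describe---from \eqref{eq:pduqr} the specialization $u=0$ yields \eqref{eq:ab3-1} and $v=0$ yields \eqref{eq:it:2.3a}, while from \eqref{eq:pduql} the specialization $u=0$ yields \eqref{eq:it:2.3b} and $v=0$ yields \eqref{eq:ab3-2}; the collection of four is the same, only the pairing with $u=0$/$v=0$ is crossed, exactly the kind of thing you flagged as delicate.
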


\begin{proof}
(\mref{it:admdual1}) $\Longleftrightarrow$
(\mref{it:admdual3})\quad By Proposition~\mref{pro:2.2},
$(A\ltimes_{\ell,r}V,P+\alpha)$ is a Rota-Baxter algebra if and
only if $(V,\ell,r,\alpha)$ is a representation of the Rota-Baxter
algebra $(A,P)$. Let the product on $(A\ltimes_{\ell,r}V)$ be
denoted by $*$. Let $a,b\in A$ and $u,v\in V$. Then we have
\begin{eqnarray*}
&&(Q+\beta)(((P+\alpha)(a+u))*(b+v))=Q(P(a)\cdot b)+\beta (\alpha
(u)r(b))+\beta(\ell(P(a))v),\\
&&((P+\alpha)(a+u))*((Q+\beta)(b+v))=P(a)\cdot
Q(b)+\alpha(u)r(Q(b))+\ell(P(a))\beta(v),\\
&&(Q+\beta)((a+u)*((Q+\beta)(b+v)))=Q(a\cdot
Q(b))+\beta(ur(Q(b)))+\beta(\ell(a)\beta(v)), \\
&&\lambda(a+u)*((Q+\beta)(b+v))=\lambda a\cdot Q(b)+\lambda
ur(Q(b))+\lambda \ell(a)\beta(v).
\end{eqnarray*}
Therefore Eq.~(\mref{eq:pduql}) holds (where $Q$ is replaced by
$Q+\beta$, $P$ by $P+\alpha$, $a$ by $a+u$, and $b$ by $b+v$) if
and only if Eq.~(\mref{eq:pduql}) (corresponding to $u=v=0$),
Eq.~(\mref{eq:it:2.3b}) (corresponding to $b=u=0$) and
Eq.~(\mref{eq:ab3-2}), where $a$ is replaced by $b$,
(corresponding to $a=v=0$) hold. Similarly, Eq.~(\mref{eq:pduqr})
holds (where $Q$ is replaced by $Q+\beta$, $P$ by $P+\alpha$, $a$
by $a+u$, and $b$ by $b+v$) if and only if Eq.~(\mref{eq:pduqr}),
Eq.~(\mref{eq:it:2.3a}) and Eq.~(\mref{eq:ab3-1})  hold. Hence
Condition~(\mref{it:admdual1}) holds if and only if
Condition~(\mref{it:admdual3}) holds.

(\mref{it:admdual2}) $\Longleftrightarrow$
(\mref{it:admdual3})\quad In Item~\meqref{it:admdual1}, take
$$V=V^*,\ell=r^*, r=\ell^*, \beta=\alpha^*,\alpha=\beta^*.$$
Then from the above equivalence between
Condition~(\mref{it:admdual1}) and Condition~(\mref{it:admdual3}),
we have Condition~(\mref{it:admdual2}) holds if and only if the conditions (\ref{it:admdual3.1})-(\ref{it:admdual3.3}) in
Condition~(\mref{it:admdual3}) as well as the following two
equations hold (for all $a\in A, u^*\in V^*$):
\begin{equation}\mlabel{eq:ab3-1-1}
\alpha^*(r^*(a)\beta^*(u^*))=\alpha^*(r^*(Q(a))u^*)+r^*(Q(a))\beta^*(u^*)+\lambda
r^* (Q(a))u^*,\end{equation}
\begin{equation}\mlabel{eq:ab3-2-1}
\alpha^*(\beta^*(u^*)\ell^*(a))=\alpha^*(u^*\ell^*(Q(a)))+\beta^*(u^*)\ell^*(Q(a))+\lambda
u^*\ell^*(Q(a)).
\end{equation}
For $a\in A$, $u\in V$ and $u^*\in V^*$, we have
\begin{eqnarray*}
&&\langle \alpha^*(\beta^*(u^*)\ell^*(a)),u\rangle=\langle
\beta^*(u^*)\ell^*(a),\alpha(u))\rangle=\langle u^*,
\beta(\ell(a)\alpha(u))\rangle,\\
&&\langle \beta^*(u^*)\ell^*(Q(a)),u\rangle=\langle \beta^*(u^*),
\ell(Q(a))u\rangle=\langle u^*, \beta(\ell(Q(a))u)\rangle,\\
&&\langle \alpha^*(u^*\ell^*(Q(a))), u\rangle=\langle
u^*\ell^*(Q(a)), \alpha(u)\rangle=\langle u^*,
\ell(Q(a))\alpha(u)\rangle,\\
&&\langle \lambda u^*\ell^*(Q(a)),u\rangle=\langle u^*,\lambda
\ell(Q(a))u\rangle.
\end{eqnarray*}
Hence Eq.~(\mref{eq:ab3-2-1}) holds if and only if
Eq.~(\mref{eq:ab3-1}) holds. Similarly, Eq.~(\mref{eq:ab3-1-1})
holds if and only if Eq.~(\mref{eq:ab3-2}) holds. Therefore
Condition~(\mref{it:admdual2}) holds if and only if
Condition~(\mref{it:admdual3}) holds.
\end{proof}

Here is our main result on antisymmetric solutions of the admissible AYBE and the constructions of Rota-Baxter \asi bialgebras.
\begin{thm} \mlabel{thm:4.21}
Let $(V, \ell, r, \beta)$ be an admissible quadruple of a
Rota-Baxter algebra $(A, P)$ of weight $\lambda$ and let $(V^*,
r^*, \ell^*, \beta^*$) be the representation of $(A,P)$ defined in
Lemma \mref{lem:admrep}. Let $Q:A\to A$ and $\alpha:V\to V$ be linear maps.  Let $T: V\to  A$ be a linear map which is
identified as an element in $(A\ltimes_{r^*,\ell^*} V^*)\otimes
(A\ltimes_{r^*,\ell^*} V^*)$ $($through $\Hom(V,A)\cong V^*\ot A \subseteq (A\ltimes_{r^*,\ell^*} V^*)\otimes
(A\ltimes_{r^*,\ell^*} V^*)$$)$.
\begin{enumerate}
\item \mlabel{it:4.21a} The element $r=T-\sigma(T)$ is an
antisymmetric solution of the $(Q+\alpha^*)$-admissible \aybe in
the Rota-Baxter algebra $(A\ltimes_{r^*,\ell^*} V^*, P+ \beta^*)$
if and only if $T$ is a weak $\mathcal{O}$-operator associated to
$(V, \ell, r)$ and $ \alpha$, and $T  \beta=Q  T$. \item
\mlabel{it:4.21b} Assume that $(V,\ell,r,\alpha)$ is a representation  of
$(A,P)$. If $T$ is an ${\mathcal O}$-operator associated to
$(V,\ell,r,\alpha)$ and $T \beta=Q T$, then  $r=T-\sigma(T)$ is an
antisymmetric solution of the $(Q+\alpha^*)$-admissible \aybe in
the Rota-Baxter algebra $(A\ltimes_{r^*,\ell^*} V^*, P+\beta^*)$.
If, in addition, $(A,P)$ is $Q$-admissible and
Eqs.~$($\mref{eq:ab3-1}$)$-$($\mref{eq:ab3-2}$)$ hold, then the
Rota-Baxter algebra $(A\ltimes_{r^*,\ell^*}V^*,P+\beta^*)$ is
$(Q+\alpha^*)$-admissible. In this case, there is a Rota-Baxter
\asi bialgebra $((A\ltimes_{r^*,\ell^*} V^*, P+ \beta^*),\Delta,
Q+\alpha^*)$, where the linear map $\Delta=\Delta_r$ is defined by
Eq.~(\mref{eq:4.1}) with $r=T-\sigma(T)$.
\end{enumerate}
 \end{thm}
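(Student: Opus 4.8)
The plan is to reduce both parts to results already proved, the one new ingredient being an explicit description of $r=T-\sigma(T)$ as a linear map on the dual of the semidirect product. First observe that, since $(V,\ell,r,\beta)$ is an admissible quadruple of $(A,P)$, Lemma~\mref{lem:admrep} makes $(V^*,r^*,\ell^*,\beta^*)$ a representation of $(A,P)$, so by Proposition~\mref{pro:2.2} the pair $(\widehat{A},\widehat{P})$ with $\widehat{A}:=A\ltimes_{r^*,\ell^*}V^*$ and $\widehat{P}:=P+\beta^*$ is a Rota-Baxter algebra of weight $\lambda$. For part~\mref{it:4.21a} I would apply Theorem~\mref{thm:4.14} (equivalently Corollary~\mref{ex:4.20}) to $(\widehat{A},\widehat{P})$ with the linear map $Q+\alpha^*$ in the role of $Q$: the antisymmetric element $r=T-\sigma(T)$ is a solution of the $(Q+\alpha^*)$-admissible \aybe in $(\widehat{A},\widehat{P})$ if and only if the associated map $r:\widehat{A}^*\to\widehat{A}$ satisfies Eq.~\meqref{eq:4.161} for $\widehat{A}$ together with the compatibility $\widehat{P}\circ r=r\circ(Q+\alpha^*)^*$ of Eq.~\meqref{eq:4.16}.

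The first of these two conditions concerns only the associative structure of $\widehat{A}$ and is, by the description of antisymmetric solutions of the \aybe in a semidirect product in terms of $\mathcal{O}$-operators from \mcite{Bai1}, equivalent to $T:V\to A$ satisfying Eq.~\meqref{eq:4.18}. For the second condition I would make the identification $\Hom(V,A)\cong V^*\ot A\subseteq\widehat{A}\ot\widehat{A}$ explicit: choosing dual bases $\{u_i\}$ of $V$ and $\{u^i\}$ of $V^*$ gives $T=\sum_i u^i\ot T(u_i)$ and $r=T-\sigma(T)=\sum_i\bigl(u^i\ot T(u_i)-T(u_i)\ot u^i\bigr)$, so that under $\widehat{A}^*\cong A^*\oplus V$ the corresponding map is $r(a^*+v)=T(v)-T^*(a^*)$, with $T(v)$ in the $A$-summand and $-T^*(a^*)$ in the $V^*$-summand of $\widehat{A}$. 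Since $\widehat{P}$ acts by $P$ on $A$ and by $\beta^*$ on $V^*$, while $(Q+\alpha^*)^*$ acts by $Q^*$ on $A^*$ and by $\alpha$ on $V$, the identity $\widehat{P}\circ r=r\circ(Q+\alpha^*)^*$ separates into its $A$-component $P\circ T=T\circ\alpha$, which is exactly Eq.~\meqref{eq:4.17}, and its $V^*$-component $\beta^*\circ T^*=T^*\circ Q^*$, i.e. $T\circ\beta=Q\circ T$ after transposing. Putting the two conditions together shows that $r=T-\sigma(T)$ solves the $(Q+\alpha^*)$-admissible \aybe in $(\widehat{A},\widehat{P})$ precisely when $T$ satisfies Eqs.~\meqref{eq:4.18} and~\meqref{eq:4.17}, that is, $T$ is a weak $\mathcal{O}$-operator associated to $(V,\ell,r)$ and $\alpha$, together with $T\beta=QT$; this proves part~\mref{it:4.21a}.

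In part~\mref{it:4.21b} the first assertion is immediate from part~\mref{it:4.21a}, since an $\mathcal{O}$-operator associated to $(V,\ell,r,\alpha)$ is in particular such a weak $\mathcal{O}$-operator. For the claim that $(\widehat{A},\widehat{P})$ is then $(Q+\alpha^*)$-admissible I would invoke Theorem~\mref{thm:admdual}, equivalence of conditions~\mref{it:admdual2} and~\mref{it:admdual3}: the four parts of condition~\mref{it:admdual3} are precisely that $(V,\ell,r,\alpha)$ is a representation of $(A,P)$ (part of the hypothesis that $T$ is an $\mathcal{O}$-operator), that $Q$ is \admt $(A,P)$, that $(V,\ell,r,\beta)$ is an admissible quadruple of $(A,P)$ (the standing hypothesis), and Eqs.~\meqref{eq:ab3-1}--\meqref{eq:ab3-2}, all of which are assumed; hence condition~\mref{it:admdual2} holds, which says exactly that $Q+\alpha^*$ is \admt $(\widehat{A},\widehat{P})$. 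Finally, since $(\widehat{A},\widehat{P})$ is $(Q+\alpha^*)$-admissible and $r=T-\sigma(T)$ is an antisymmetric solution of the $(Q+\alpha^*)$-admissible \aybe in it, Corollary~\mref{rmk:4.12} yields the Rota-Baxter \asi bialgebra $((\widehat{A},\widehat{P}),\Delta_r,Q+\alpha^*)$ with $\Delta_r$ given by Eq.~\meqref{eq:4.1}. The step demanding the most care is the dualization bookkeeping in the middle paragraph --- keeping the $A$- and $V^*$-summands of $\widehat{A}$ and the $A^*$- and $V$-summands of $\widehat{A}^*$ apart when splitting $\widehat{P}\circ r=r\circ(Q+\alpha^*)^*$; the remaining steps are applications of Theorems~\mref{thm:4.14} and~\mref{thm:admdual}, Corollary~\mref{rmk:4.12} and the associative $\mathcal{O}$-operator result of~\mcite{Bai1}.
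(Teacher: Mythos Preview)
Your proof is correct and follows essentially the same route as the paper's: cite the known equivalence between antisymmetric \aybe solutions in the semidirect product and Eq.~\meqref{eq:4.18} (the paper uses \cite[Corollary~3.10]{BGN1} rather than \mcite{Bai1}), then reduce the extra admissibility condition to $PT=T\alpha$ and $QT=T\beta$, and finally invoke Theorem~\mref{thm:admdual} and Corollary~\mref{rmk:4.12} for part~\mref{it:4.21b}. The only cosmetic difference is that you verify Eq.~\meqref{eq:4.11} via the map form $\widehat{P}\circ r=r\circ(Q+\alpha^*)^*$ and split into $A$- and $V^*$-components, whereas the paper checks the tensor identity $((P+\beta^*)\otimes\id)(r)=(\id\otimes(Q+\alpha^*))(r)$ directly in a basis; these are dual formulations of the same computation.
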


\begin{proof}
(\mref{it:4.21a}) \cite[Corollary 3.10]{BGN1} shows that $r$ satisfies Eq.~(\mref{eq:4.10}) if and only if Eq.~(\mref{eq:4.18}) holds.

Let $\{e_1, e_2,\cdots, e_n\}$ be a basis of $V$ and $\{e^1, e^2, \cdots, e^n\}$ be its dual basis. Then
$T=\sum_{i=1}^n T(e_i)\otimes e^i\in  (A\ltimes_{r^*,\ell^*} V^*)\otimes (A\ltimes_{r^*,\ell^*} V^*)$.
Hence
$$r=T-\sigma (T)=\sum_{i=1}^n (T(e_i)\otimes e^i-e^i\otimes T(e_i)).$$
Note that
\begin{eqnarray*}
((P+\beta^*)\otimes {\rm id})(r)&=&\sum_{i=1}^n(P  T(e_i)\otimes e^i-\beta^*(e^i)\otimes T(e_i)),\\
({\rm id}\otimes (Q+\alpha^*))(r)&=&\sum_{i=1}^n(T(e_i)\otimes \alpha^*(e^i)-e^i\otimes Q  T(e_i)).
\end{eqnarray*}
Further,
\begin{eqnarray*}
\sum_{i=1}^n \beta^*(e^i)\otimes T(e_i)&=&\sum_{i=1}^n\sum_{j=1}^n\langle \beta^*(e^i),e_j\rangle e^j\otimes T(e_i)= \sum_{j=1}^n e^j\otimes \sum_{i=1}^n\langle e^i,\beta(e_j)\rangle T(e_i)\\
&=&\sum_{i=1}^n e^i\otimes T(\sum_{j=1}^n\langle \beta(e_i),
e^j\rangle e_j)=\sum_{i=1}^n e^i\otimes T  \beta(e_i),
\end{eqnarray*}
and similarly,
$\sum_{i=1}^n T(e_i)\otimes \alpha^*(e^i)=\sum_{i=1}^n T  \alpha (e_i)\otimes e^i.
$
Therefore $((P+\beta^*)\otimes {\rm id})(r)=({\rm id}\otimes (Q+\alpha^*))(r)$ if and only if
$P  T=T  \alpha$ and $Q  T=T  \beta.$
Hence the conclusion follows.
\smallskip

\noindent
(\mref{it:4.21b}) It follows from Item~(\mref{it:4.21a}) and Theorem~\mref{thm:admdual}.
\end{proof}

\subsection{Some cases and examples}
\mlabel{ss:exam}
According to Theorem~\mref{thm:4.21},
from an $\mathcal O$-operator $T$ of a Rota-Baxter algebra $(A,P)$
associated to a representation $(V,l,r,\alpha)$ and an admissible
quadruple $(V,l,r,\beta)$ satisfying $T\beta=QT$, one can get a
skew-symmetric solution of the admissible AYBE in a semi-direct
product Rota-Baxter algebra. If in addition, $(A,P)$ is
$Q$-admissible and
Eqs.~$($\mref{eq:ab3-1}$)$-$($\mref{eq:ab3-2}$)$ hold, then one
can get a Rota-Baxter ASI bialgebra in the semi-direct product
Rota-Baxter algebra.

In general, $\beta$ is not related to $\alpha$. When $\beta$ does
depend on $\alpha$ in certain way: $\beta=\Pi(\alpha)$, say for a
Laurent series $\Pi\in K[x,x^{-1}]$, then it is natural to expect
that the double dual of a representation is the representation itself:
$\Pi(\beta)=\Pi^2(\alpha)=\alpha$. This happens when $\Pi(x)$ is
either $\pm x$, or $-x+\theta$, or $\theta x^{-1}$ when  $x$ is
invertible and $0\neq \theta\in K$, that is, when $\beta=\pm
\alpha$ or $-\alpha+\theta \id$ or $\beta =\theta \alpha^{-1}$. We
will investigate these instances more carefully since they provide
interesting examples and applications of our general construction
of Rota-Baxter ASI bialgebras.

To emphasize,  for all $\Pi$ in the set
$$ \admset,
\quad K^\times :=K\backslash \{0\},$$
we have $\Pi^2(\alpha)=\alpha$ and $\Pi(\alpha^*)=\Pi(\alpha)^*$.
Moreover, for any linear map $T:V\to A$, it is obvious that $T\Pi(\alpha)=\Pi(P)T$ when $T\alpha=PT$.

Applying Theorem~\mref{thm:admdual}, we conclude

\begin{pro}\label{co:semisemi}
Let $(A,P)$ be a Rota-Baxter algebra of weight $\lambda$. Let $(V,
\ell, r)$ be a representation of the algebra $A$ and $\alpha:V\to
V$ be a linear map. For $\Pi\in \admset$, there is a Rota-Baxter
algebra $(A\ltimes_{r^*,\ell^*}V^*,P+\Pi(\alpha^*))$ that
is $\Pi(P+\Pi(\alpha^*))$-admissible $($that is,
$(\Pi(P)+\alpha^*)$-admissible$)$ if and only if the {\bf
$\Pi$-admissible equations} $($associated to the quadruple
$(V,\ell,r,\alpha)$$)$ hold. Here
\begin{enumerate}
\item \mlabel{it:semisemi1}
when $\Pi=\theta x$ with $\theta=\pm 1$, the $\Pi$-admissible equations are
{\small
\begin{eqnarray}
&& \big(\theta P+P+\lambda \id\big) (aP(b))+\lambda P(ab)=0,\mlabel{eq:semi1}\\
 && \big(\theta P+P+\lambda \id\big)(P(a)b)+\lambda P(ab)=0,\mlabel{eq:semi2}\\
&&
\ell(P(a))\alpha(v)=\alpha(\ell(P(a))v)+\alpha(\ell(a)\alpha(v))+\lambda
\alpha(\ell(a)v)), \mlabel{eq:semi3}\\
&&\alpha(v)r(P(a))=\alpha(\alpha(v)r(a))+\alpha(v r(P(a)))+\lambda
\alpha(v r(a)), \mlabel{eq:semi4}\\
&&\big(\theta \alpha+\alpha+\lambda\id\big) (\ell(a)\alpha(v))+\lambda
\alpha(\ell(a)v) = 0, \mlabel{eq:semi5}\\
&&\big(\theta \alpha+\alpha+\lambda\id\big) (\alpha(v)r(a))+\lambda \alpha(v r(a))=0, \mlabel{eq:semi6}\\
 &&
 \big(\theta \alpha+\alpha+\lambda\id\big) (\ell(P(a))v)+\lambda\alpha(\ell(a)v)=0,\mlabel{eq:semi7}\\
 &&\big(\theta \alpha+\alpha+\lambda\id\big) (vr(P(a)))+\lambda \alpha (vr(a))=0, \quad \forall a,b\in A, v\in V;\mlabel{eq:semi8}
\end{eqnarray}
}
\item\label{it:semisemi1.5}
when $\Pi=-x+\theta$ with $\theta\ne 0$, the $\Pi$-admissible equations are Eqs.~\eqref{eq:semi3}-\eqref{eq:semi4} and
\begin{eqnarray}
&&(\lambda+\theta)(P(ab)+aP(b)-\theta a b)=0,\label{eq:semi1.5-1}\\
&&(\lambda+\theta)(P(ab)+P(a)b-\theta a b)=0,\label{eq:semi1.5-2}\\
&&(\lambda+\theta)(l(a)\alpha(v)+\alpha(l(a)v)-\theta l(a) v)=0,\label{eq:semi1.5-3}\\
&&(\lambda+\theta)(\alpha(v)r(a)+\alpha(vr(a))-\theta vr(a))=0,\label{eq:semi1.5-4}\\
&&(\lambda+\theta)(l(P(a))v+\alpha(l(a)v)-\theta l(a) v)=0,\label{eq:semi1.5-5}\\
&&(\lambda+\theta)(vr(P(a))+\alpha(vr(a))-\theta vr(a))=0,\quad \forall a,b\in A, v\in V;\label{eq:semi1.5-6}
\end{eqnarray}
\item \label{it:semisemi2} when $\Pi=\theta x^{-1}, \theta\neq 0$
$($in which case assume that $P$ and $\alpha$
are invertible$)$, the $\Pi$-admissible equations are
\begin{eqnarray}
&P(aP(b))=P(P(a)b)=\theta ab,&\mlabel{eq:semi-1}\\
& \alpha(\ell(a)\alpha(v))=\alpha(\ell(P(a))v)=\theta \ell(a)v,
\quad \alpha(\alpha(v)r(a))=\alpha(vr(P(a)))=\theta vr(a), &\mlabel{eq:semi-3}\\
&\hspace{-.5cm}
\ell(P(a))\alpha(v)=\big(\lambda \alpha+2\theta\id\big)(\ell(a)v),
\alpha(v)r(P(a))=\big(\lambda \alpha +2\theta\id\big)(vr(a)), \forall a, b\in A, v\in V.&
\mlabel{eq:semi-5}
\end{eqnarray}
\end{enumerate}
\end{pro}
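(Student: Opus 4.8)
The plan is to read the statement off Theorem~\ref{thm:admdual} via a single substitution. Recalling from the remarks preceding the proposition that for $\Pi\in\admset$ one has $\Pi^{2}(\alpha)=\alpha$ and $\Pi(\alpha^{*})=\Pi(\alpha)^{*}$, and noting that $P$ and $\Pi(\alpha^{*})$ act independently on the two summands of $A\oplus V^{*}$, I would first record that applying the (Laurent) polynomial $\Pi$ block-wise gives $\Pi\big(P+\Pi(\alpha^{*})\big)=\Pi(P)+\Pi^{2}(\alpha^{*})=\Pi(P)+\alpha^{*}$, the operator $\Pi(P)$ being defined because $P$ is assumed invertible in the case $\Pi=\theta x^{-1}$. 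Then, putting
$$\beta:=\Pi(\alpha),\qquad Q:=\Pi(P),$$
the assertion that there is a Rota-Baxter algebra $(A\ltimes_{r^{*},\ell^{*}}V^{*},P+\Pi(\alpha^{*}))$ for which $\Pi(P)+\alpha^{*}$ is \admt it is \emph{verbatim} Condition~(\ref{it:admdual2}) of Theorem~\ref{thm:admdual}, hence equivalent to Condition~(\ref{it:admdual3}), namely the conjunction of the four groups of identities (\ref{it:admdual3.1})--(\ref{it:admdual3.4}) for these $Q$ and $\beta$. So the remaining task is to check that, case by case in $\Pi$, this conjunction coincides with the displayed list of $\Pi$-admissible equations.

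Next I would run through the four groups. Group (\ref{it:admdual3.1}) is just Eqs.~\eqref{eq:2.1}--\eqref{eq:2.2}, which are literally Eqs.~\eqref{eq:semi3}--\eqref{eq:semi4} in cases (\ref{it:semisemi1}) and (\ref{it:semisemi1.5}), and in case (\ref{it:semisemi2}) are recovered by combining Eqs.~\eqref{eq:semi-3} and~\eqref{eq:semi-5}. For group (\ref{it:admdual3.2}) I would write out the admissibility equations~\eqref{eq:pduqr}--\eqref{eq:pduql} with $Q=\Pi(P)$ and eliminate the product $P(a)P(b)$ by the Rota-Baxter identity~\eqref{eq:1.1}: for $\Pi=\theta x$, multiplying through by the unit $\theta$ and using $\theta^{2}=1$ gives Eqs.~\eqref{eq:semi1}--\eqref{eq:semi2}; for $\Pi=-x+\theta$, expanding $\Pi(P)=-P+\theta\,\id$ makes almost everything cancel, leaving the $(\lambda+\theta)$-multiples~\eqref{eq:semi1.5-1}--\eqref{eq:semi1.5-2}; for $\Pi=\theta x^{-1}$, composing with $P$, replacing $a$ by $P(a)$ and invoking~\eqref{eq:1.1} yields~\eqref{eq:semi-1}. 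For group (\ref{it:admdual3.3}) I would put $\beta=\Pi(\alpha)$ into Eqs.~\eqref{eq:it:2.3a}--\eqref{eq:it:2.3b} and simplify using the relations from group (\ref{it:admdual3.1}), obtaining Eqs.~\eqref{eq:semi7}--\eqref{eq:semi8}, resp.~\eqref{eq:semi1.5-5}--\eqref{eq:semi1.5-6}, resp. further parts of Eqs.~\eqref{eq:semi-3}--\eqref{eq:semi-5}. Finally, for group (\ref{it:admdual3.4}) I would substitute $Q=\Pi(P)$ and $\beta=\Pi(\alpha)$ into Eqs.~\eqref{eq:ab3-1}--\eqref{eq:ab3-2} and simplify in the same way, producing Eqs.~\eqref{eq:semi5}--\eqref{eq:semi6}, resp.~\eqref{eq:semi1.5-3}--\eqref{eq:semi1.5-4}, resp. the rest of Eqs.~\eqref{eq:semi-3}--\eqref{eq:semi-5}. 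Assembling the four groups in each case reproduces exactly the stated $\Pi$-admissible equations; conversely, since every simplification is either composition with a unit ($\theta$, $P$, $\alpha$) or an application of the unconditional identity~\eqref{eq:1.1}, each step is reversible, so the equivalence holds in both directions.

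The bulk of the work, and the only place where I expect real difficulty, is the case-by-case reduction just sketched. The case $\Pi=\theta x^{-1}$ is the most delicate: one must use the substitutions $a\mapsto P(a)$ and $v\mapsto\alpha(v)$ to clear $P^{-1}$ and $\alpha^{-1}$, and must check that these substitutions—bijective because $P$ and $\alpha$ are invertible—preserve equivalence rather than mere implication, and that the final identities \eqref{eq:semi-1}, \eqref{eq:semi-3}, \eqref{eq:semi-5} indeed repackage the four groups. The case $\Pi=-x+\theta$ requires carefully tracking the weight-$\lambda$ contributions through the cancellations so that the overall factor $\lambda+\theta$ emerges. Apart from this bookkeeping, everything is a direct, if lengthy, computation.
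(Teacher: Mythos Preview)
Your proposal is correct and follows essentially the same approach as the paper: substitute $\beta=\Pi(\alpha)$ and $Q=\Pi(P)$ into Theorem~\ref{thm:admdual}, identify the statement with Condition~(\ref{it:admdual2}), and then verify case by case that Condition~(\ref{it:admdual3}) reduces to the listed $\Pi$-admissible equations. The paper carries out only Case~(\ref{it:semisemi1}) in detail and declares the other two ``similarly verified,'' so your explicit breakdown of how each of the four groups (\ref{it:admdual3.1})--(\ref{it:admdual3.4}) maps to the displayed equations, together with your remarks on reversibility in the $\Pi=\theta x^{-1}$ case, actually supplies more of the bookkeeping than the paper does.
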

\begin{proof} The statement is a consequence of Theorem~\mref{thm:admdual} in the case that
$\beta=\Pi(\alpha)$ and $Q=\Pi(P)$. We provide some details for Case~(\mref{it:semisemi1}). Cases~(\ref{it:semisemi1.5}) and (\mref{it:semisemi2}) are similarly verified.
\smallskip

\noindent
(\mref{it:semisemi1})
 Let $\beta=\theta\alpha$ and $Q=\theta P$. Then by Theorem~\mref{thm:admdual},
$(A\ltimes_{r^*,\ell^*}V^*,P+\theta \alpha^*)$ is a Rota-Baxter
algebra that is $(\theta P+\alpha^*)$-admissible if and only if the
following conditions hold.
\begin{enumerate}
\item[(i)] $(V,\ell,r,\theta\alpha)$ is an admissible quadruple of $(A,P)$.
 By Lemma~\mref{lem:admrep}, $(V^*, r^*, \ell^*, \theta\alpha^*)$ is a representation of
 $(A, P)$ if and only if
 \begin{eqnarray}
 & \alpha(vr(P(a)))-\alpha(v)r(P(a))-\theta \alpha(\alpha(v)r(a))-\lambda\alpha(v)r(a)=0,& \mlabel{eq:it:2.3aa} \\
 & \alpha(\ell(P(a))v)-\ell(P(a))\alpha(v)-\theta \alpha(\ell(a)\alpha(v))-\lambda \ell(a)\alpha(v)=0,\  \forall v\in V, a\in A.&
 \mlabel{eq:it:2.3bb}
 \end{eqnarray}
\item[(ii)] $\theta P$ is admissible to $(A,P)$. Applying Item~(i)
to the adjoint representation
$(V,L,R,P)$, this holds if and only if Eqs.~(\mref{eq:semi1}) and
(\mref{eq:semi2}) hold. \item[(iii)] $(V,\ell,r,\alpha)$ is a
representation of $(A,P)$. It holds if and only if
Eqs.~(\mref{eq:semi3}) and (\mref{eq:semi4}) hold.
\item[(iv)]
Eqs.~(\mref{eq:ab3-1}) and (\mref{eq:ab3-2}) hold, where
$\beta=\theta\alpha$ and $Q=\theta P$.
\end{enumerate}
A straightforward computation shows that Eqs.~(\ref{eq:it:2.3aa}),
(\ref{eq:it:2.3bb}), (\mref{eq:semi3}),(\mref{eq:semi4}),
(\mref{eq:ab3-1}) with $\beta=\theta\alpha$ and $Q=\theta P$ and
Eq.~(\mref{eq:ab3-2}) with $\beta=\theta\alpha$ and $Q=\theta P$ hold
if and only if Eqs.~(\mref{eq:semi3})-(\mref{eq:semi8}) hold.
\end{proof}

Taking $\lambda=0$ and $\Pi=-x$ in
Proposition~\mref{co:semisemi}.(\mref{it:semisemi1}),
and $\theta=-\lambda$ with $\lambda\ne 0$ and hence $\Pi=-x-\lambda$ in Proposition~\mref{co:semisemi}.(\mref{it:semisemi1.5}),
 we obtain an important
case where the admissibility does not impose any
restraints.

\begin{cor}
\mlabel{co:special}
Let $(A,P)$ be a Rota-Baxter algebra of weight $\lambda$
and $(V, \ell, r, \alpha)$ be a representation of $(A, P)$. Then $(V^*,r^*,\ell^*,-\alpha^*-\lambda\id_{V^*})$
is a representation of $(A,P)$ and hence
gives rise to a Rota-Baxter algebra $(A\ltimes_{r^*,\ell^*}V^*,P-
\alpha^*-\lambda\id_{V^*})$. Moreover,
the linear operator $-P-\lambda\id_A+\alpha^*$ on $A\oplus V^*$ is \admt
$(A\ltimes_{r^*,\ell^*}V^*,P-\alpha^*-\lambda\id_{V^*})$.
\end{cor}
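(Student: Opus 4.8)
The plan is to read off Corollary~\mref{co:special} as a degenerate instance of Proposition~\mref{co:semisemi}, choosing in each case a Laurent series $\Pi\in\admset$ for which the ``$\Pi$-admissible equations'' collapse to nothing more than the hypothesis that $(V,\ell,r,\alpha)$ is a representation of $(A,P)$. Throughout one works with $\beta=\Pi(\alpha)$ and $Q=\Pi(P)$; since $\Pi(\alpha^*)=\Pi(\alpha)^*$ for $\Pi\in\admset$, the operator $P+\Pi(\alpha^*)$ of Proposition~\mref{co:semisemi} is exactly $P-\alpha^*-\lambda\id_{V^*}$ and $\Pi(P)+\alpha^*$ is exactly $-P-\lambda\id_A+\alpha^*$, which matches the operators named in the statement. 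Because the choice of $\Pi$ that annihilates the obstructions depends on whether the weight vanishes, I would split into the cases $\lambda=0$ and $\lambda\ne 0$.

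For $\lambda=0$ take $\Pi=-x$, i.e.\ $\Pi=\theta x$ with $\theta=-1$, and apply Proposition~\mref{co:semisemi}.(\mref{it:semisemi1}). The key observation is that the operator coefficients $\theta P+P+\lambda\id$ in Eqs.~\meqref{eq:semi1}--\meqref{eq:semi2} and $\theta\alpha+\alpha+\lambda\id$ in Eqs.~\meqref{eq:semi5}--\meqref{eq:semi8} all vanish identically, so those six equations are automatic; the only surviving $\Pi$-admissible equations are \meqref{eq:semi3} and \meqref{eq:semi4}, which are precisely the two identities defining a representation of $(A,P)$ and hence hold by hypothesis. For $\lambda\ne 0$ take instead $\Pi=-x-\lambda$, i.e.\ $\Pi=-x+\theta$ with $\theta=-\lambda\ne 0$, and apply Proposition~\mref{co:semisemi}.(\mref{it:semisemi1.5}). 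Now the common scalar coefficient $\lambda+\theta$ of Eqs.~\meqref{eq:semi1.5-1}--\meqref{eq:semi1.5-6} vanishes, so these are automatic, and once more the only remaining equations are \meqref{eq:semi3}--\meqref{eq:semi4}, true by hypothesis. In either case Proposition~\mref{co:semisemi} then yields that $(A\ltimes_{r^*,\ell^*}V^*,P-\alpha^*-\lambda\id_{V^*})$ is a Rota-Baxter algebra --- in particular $(V^*,r^*,\ell^*,-\alpha^*-\lambda\id_{V^*})$ is a representation of $(A,P)$ by Proposition~\mref{pro:2.2} (this can also be checked directly from Lemma~\mref{lem:admrep} with $\beta=-\alpha-\lambda\id_V$) --- and that $-P-\lambda\id_A+\alpha^*$ is admissible to it, which is exactly the assertion of the corollary.

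I do not expect a genuine obstacle here: Proposition~\mref{co:semisemi} does all the work, and the content of the corollary is precisely the remark that for these two special $\Pi$ every $\Pi$-admissible equation beyond ``$(V,\ell,r,\alpha)$ is a representation of $(A,P)$'' carries a coefficient that is forced to be zero. The one point that needs care is the case distinction: no single uniform choice of $\Pi$ annihilates the coefficients $\theta P+P+\lambda\id$ (respectively $\lambda+\theta$) for every weight simultaneously, so one must use $\Pi=-x$ exactly when $\lambda=0$ and $\Pi=-x-\lambda$ otherwise, and then check that the two specializations deliver the same pair of operators $P-\alpha^*-\lambda\id_{V^*}$ and $-P-\lambda\id_A+\alpha^*$ appearing in the statement.
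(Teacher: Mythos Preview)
Your proposal is correct and follows exactly the paper's own approach: the paper also splits into the cases $\lambda=0$ (using $\Pi=-x$ in Proposition~\mref{co:semisemi}.(\mref{it:semisemi1})) and $\lambda\ne 0$ (using $\Pi=-x-\lambda$ in Proposition~\mref{co:semisemi}.(\mref{it:semisemi1.5})), observing that in each case the $\Pi$-admissible equations beyond Eqs.~\meqref{eq:semi3}--\meqref{eq:semi4} carry a vanishing coefficient. Your write-up is in fact more explicit than the paper's one-line justification, but the route is the same.
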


For antisymmetric solutions of the admissible AYBE and the resulting Rota-Baxter \asi bialgebras, we have
\begin{pro}
 Let $(A, P)$ be a Rota-Baxter algebra of weight $\lambda$. Let $(V, \ell, r)$ be a representation of
the algebra $A$.  Let $\alpha:V\to V$ and $T: V\to  A$ be linear
maps. Let $\Pi\in \admset$.
\begin{enumerate}
\item Let $(V, \ell, r, \Pi(\alpha))$ be an admissible quadruple
of $(A,P)$. Then $r=T-\sigma(T)$ is an antisymmetric solution of
the $(\Pi(P)+\alpha^*)$-admissible \aybe
in $(A\ltimes_{r^*,\ell^*} V^*, P+\Pi(\alpha^*))$ if and only if
$T$ is a weak $\mathcal{O}$-operator associated to $(V, \ell, r)$
and $\alpha$. \mlabel{it:cons1} \item Assume the validity of the
$\Pi$-admissible equations, given respectively by
Eqs.~$($\mref{eq:semi1}$)$-$($\mref{eq:semi8}$)$ for $\Pi=\pm x$,
by Eqs.~\eqref{eq:semi3}-\eqref{eq:semi4}, \eqref{eq:semi1.5-1}-\eqref{eq:semi1.5-6} for
$\Pi\in -x+K^\times$,
and by Eqs.~\meqref{eq:semi-1}-\meqref{eq:semi-5} for $\Pi\in
K^{\times}x^{-1}$. Then $(V,\ell,r,\alpha)$ is a representation of
$(A,P)$ and there is a $(\Pi(P)+\alpha^*)$-admissible Rota-Baxter
algebra $(A\ltimes_{r^*,\ell^*}V^*,P+\Pi(\alpha^*))$. If $T$ is an
${\mathcal O}$-operator associated to $(V,\ell,r,\alpha)$, then
$r=T-\sigma(T)$ is an antisymmetric solution of the
$(\Pi(P)+\alpha^*)$-admissible \aybe in the Rota-Baxter algebra
$(A\ltimes_{r^*,\ell^*} V^*, P +\Pi(\alpha^*))$. Further there is
a Rota-Baxter \asi bialgebra $((A\ltimes_{r^*,\ell^*} V^*,
P+\Pi(\alpha^*)),\Delta,\Pi(P)+\alpha^*)$, where the linear map
$\Delta=\Delta_r$ is defined by Eq.~(\mref{eq:4.1}) with
$r=T-\sigma(T)$. \mlabel{it:cons2}
\end{enumerate}
\mlabel{cor:cons}
\end{pro}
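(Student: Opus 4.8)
The plan is to read both parts off Theorem~\mref{thm:4.21}, specialized to $\beta=\Pi(\alpha)$ and $Q=\Pi(P)$. Three facts, all available, make this immediate. (a)~$\Pi(\alpha)^*=\Pi(\alpha^*)$, so the Rota-Baxter algebra produced by Theorem~\mref{thm:4.21} is exactly $(A\ltimes_{r^*,\ell^*}V^*,P+\Pi(\alpha^*))$. (b)~If $P\circ T=T\circ\alpha$ then $\Pi(P)\circ T=T\circ\Pi(\alpha)$; this is the observation recorded just before the proposition (a one-line computation for polynomial $\Pi$, using the invertibility of $P$ and $\alpha$ when $\Pi\in K^\times x^{-1}$, as already in force in Proposition~\mref{co:semisemi}). (c)~By Proposition~\mref{co:semisemi} together with Theorem~\mref{thm:admdual}, the $\Pi$-admissible equations -- in each of the three cases $\Pi=\pm x$, $\Pi\in -x+K^\times$, $\Pi\in K^\times x^{-1}$ -- are equivalent to the conditions of Theorem~\mref{thm:admdual}(\mref{it:admdual3}) for $(\beta,Q)=(\Pi(\alpha),\Pi(P))$: that $(V,\ell,r,\alpha)$ is a representation of $(A,P)$, that $\Pi(P)$ is admissible to $(A,P)$, that $(V,\ell,r,\Pi(\alpha))$ is an admissible quadruple, and that Eqs.~\meqref{eq:ab3-1}--\meqref{eq:ab3-2} hold with $\beta=\Pi(\alpha)$, $Q=\Pi(P)$; equivalently, that $(A\ltimes_{r^*,\ell^*}V^*,P+\Pi(\alpha^*))$ exists and is $(\Pi(P)+\alpha^*)$-admissible. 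This uniformity lets us treat the three cases of $\Pi$ in one stroke.

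For Part~(\mref{it:cons1}): the hypothesis that $(V,\ell,r,\Pi(\alpha))$ is an admissible quadruple lets us apply Theorem~\mref{thm:4.21}(\mref{it:4.21a}) with $\beta=\Pi(\alpha)$, $Q=\Pi(P)$, giving that $r=T-\sigma(T)$ is an antisymmetric solution of the $(\Pi(P)+\alpha^*)$-admissible \aybe in $(A\ltimes_{r^*,\ell^*}V^*,P+\Pi(\alpha^*))$ if and only if $T$ is a weak $\mathcal{O}$-operator associated to $(V,\ell,r)$ and $\alpha$ \emph{and} $T\circ\Pi(\alpha)=\Pi(P)\circ T$. A weak $\mathcal{O}$-operator satisfies $P\circ T=T\circ\alpha$ by Eq.~\meqref{eq:4.17}, so fact~(b) makes the second conjunct automatic; the biconditional then reduces to the one claimed.

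For Part~(\mref{it:cons2}): by fact~(c), validity of the $\Pi$-admissible equations guarantees that $(V,\ell,r,\alpha)$ is a representation of $(A,P)$ and that $(A\ltimes_{r^*,\ell^*}V^*,P+\Pi(\alpha^*))$ is a $(\Pi(P)+\alpha^*)$-admissible Rota-Baxter algebra, which is the first half of the claim. If in addition $T$ is an $\mathcal{O}$-operator associated to $(V,\ell,r,\alpha)$, then it is in particular a weak $\mathcal{O}$-operator, so $P\circ T=T\circ\alpha$ and hence $T\circ\Pi(\alpha)=\Pi(P)\circ T$ by fact~(b). Thus all the hypotheses of Theorem~\mref{thm:4.21}(\mref{it:4.21b}) are present -- $(V,\ell,r,\Pi(\alpha))$ an admissible quadruple, $(V,\ell,r,\alpha)$ a representation of $(A,P)$, $T$ an $\mathcal{O}$-operator with $T\circ\Pi(\alpha)=\Pi(P)\circ T$, and, for its final clause, $(A,P)$ being $\Pi(P)$-admissible together with Eqs.~\meqref{eq:ab3-1}--\meqref{eq:ab3-2} -- so Theorem~\mref{thm:4.21}(\mref{it:4.21b}) yields both that $r=T-\sigma(T)$ is an antisymmetric solution of the $(\Pi(P)+\alpha^*)$-admissible \aybe in $(A\ltimes_{r^*,\ell^*}V^*,P+\Pi(\alpha^*))$ and that $((A\ltimes_{r^*,\ell^*}V^*,P+\Pi(\alpha^*)),\Delta_r,\Pi(P)+\alpha^*)$ is a Rota-Baxter \asi bialgebra with $\Delta_r$ given by Eq.~\meqref{eq:4.1}.

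The proof is thus pure assembly, so I expect no genuine obstacle; the one point needing attention is fact~(c), that is, confirming case by case that the $\Pi$-admissible equations really do encode every hypothesis of Theorem~\mref{thm:4.21} (admissible quadruple, $\Pi(P)$-admissibility of $(A,P)$, and Eqs.~\meqref{eq:ab3-1}--\meqref{eq:ab3-2}), but this is precisely the content already supplied by the case analysis in the proof of Proposition~\mref{co:semisemi}.
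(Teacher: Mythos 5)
Your proof is correct and follows exactly the paper's route: the paper's own proof is the two-line statement that Part~(\ref{it:cons1}) follows from Theorem~\ref{thm:4.21}.(\ref{it:4.21a}) and Part~(\ref{it:cons2}) from Proposition~\ref{co:semisemi} together with Theorem~\ref{thm:4.21}.(\ref{it:4.21b}), with the specialization $\beta=\Pi(\alpha)$, $Q=\Pi(P)$ and the observations $\Pi(\alpha)^*=\Pi(\alpha^*)$ and $T\Pi(\alpha)=\Pi(P)T$ (the latter making the extra conjunct in Theorem~\ref{thm:4.21}.(\ref{it:4.21a}) automatic) left implicit. You have simply spelled out those implicit steps; nothing differs in substance.
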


\begin{proof}
(\mref{it:cons1}) follows from Theorem~\mref{thm:4.21}.(\mref{it:4.21a}).

\smallskip

\noindent
(\mref{it:cons2}) follows from Proposition~\mref{co:semisemi} and Theorem~\mref{thm:4.21}.(\mref{it:4.21b}).
\end{proof}

We next focus on the case when $Q=-P-\lambda \id$ for a Rota-Baxter algebra $(A,P)$ of weight $\lambda$. In this case, by Corollary~\ref{co:special}, the $\Pi$-admissible equations associated to $(V,l,r,\alpha)$
hold automatically. Hence Proposition~\ref{cor:cons}.(\ref{it:cons2}) gives

\begin{cor}
Let $(A, P)$ be a Rota-Baxter algebra of weight $\lambda$.
Let $T:V\rightarrow A$ be an
${\mathcal O}$-operator  associated to a representation $(V,\ell,r,\alpha)$ of $(A,P)$. Then
$r=T-\sigma(T)$ is an antisymmetric solution of the
$(-P-\lambda\id_A+\alpha^*)$-admissible \aybe in the Rota-Baxter algebra
$(A\ltimes_{r^*,\ell^*} V^*, P -\alpha^*-\lambda\id_{V^*})$. Further there is
a Rota-Baxter \asi bialgebra $((A\ltimes_{r^*,\ell^*} V^*,
P-\alpha^*-\lambda\id_{V^*}),\Delta,-P-\lambda\id_A+\alpha^*)$, where the linear map
$\Delta=\Delta_r$ is defined by Eq.~(\mref{eq:4.1}) with
$r=T-\sigma(T)$.
\mlabel{cor:cons2}
\end{cor}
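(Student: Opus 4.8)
The plan is to obtain this corollary as the special case $\Pi(x)=-x-\lambda$ of Proposition~\mref{cor:cons}.(\mref{it:cons2}), so essentially all of the work has already been done in the preceding results. First I would fix the case split forced by the definition of $\admset$: when $\lambda=0$ the relevant $\Pi$ is $-x$, which lies in $\admset$ through the component $\{\pm x\}$, and when $\lambda\neq 0$ it is $-x-\lambda$, which lies there through $-x+K^\times$. In either case $\Pi\in\admset$, so by the discussion preceding Proposition~\mref{co:semisemi} we have $\Pi^2(\alpha)=\alpha$ and $\Pi(\alpha^*)=\Pi(\alpha)^*$; explicitly $\beta:=\Pi(\alpha)=-\alpha-\lambda\id_V$, $Q:=\Pi(P)=-P-\lambda\id_A$, and $\Pi(\alpha^*)=-\alpha^*-\lambda\id_{V^*}$.

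Next I would invoke Corollary~\mref{co:special}, which is precisely the statement that for this choice of $\Pi$ the $\Pi$-admissible equations of Proposition~\mref{co:semisemi} hold with no extra hypotheses on $(A,P)$ or on the representation $(V,\ell,r,\alpha)$: it asserts that $(V^*,r^*,\ell^*,-\alpha^*-\lambda\id_{V^*})$ is a representation of $(A,P)$ and that $-P-\lambda\id_A+\alpha^*$ is \admt the Rota-Baxter algebra $(A\ltimes_{r^*,\ell^*}V^*,P-\alpha^*-\lambda\id_{V^*})$. This supplies, free of charge, the hypothesis ``validity of the $\Pi$-admissible equations'' required by Proposition~\mref{cor:cons}.(\mref{it:cons2}); the other standing hypothesis, that $(V,\ell,r,\alpha)$ be a representation of $(A,P)$, is part of the data defining an $\calo$-operator. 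I would also remark that the compatibility $T\beta=QT=\Pi(P)T$ that is implicitly needed is automatic: since $T$ is an $\calo$-operator we have $T\alpha=PT$ by Eq.~\meqref{eq:4.17}, and hence $T\Pi(\alpha)=\Pi(P)T$.

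With these inputs in hand, Proposition~\mref{cor:cons}.(\mref{it:cons2}) applies verbatim and delivers that $r=T-\sigma(T)$ is an antisymmetric solution of the $(\Pi(P)+\alpha^*)$-admissible \aybe in $(A\ltimes_{r^*,\ell^*}V^*,P+\Pi(\alpha^*))$, together with the Rota-Baxter \asi bialgebra $((A\ltimes_{r^*,\ell^*}V^*,P+\Pi(\alpha^*)),\Delta_r,\Pi(P)+\alpha^*)$ with $\Delta_r$ given by Eq.~\meqref{eq:4.1}. The last step is just to substitute $\Pi(P)=-P-\lambda\id_A$ and $\Pi(\alpha^*)=-\alpha^*-\lambda\id_{V^*}$ to rewrite these in the form stated. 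I do not expect any genuine obstacle; the only point requiring a little care is the $\lambda=0$ versus $\lambda\neq 0$ distinction just noted, since $-x$ must be read out of the $\{\pm x\}$ branch of Propositions~\mref{co:semisemi} and~\mref{cor:cons} rather than the $-x+K^\times$ branch, which excludes $\theta=0$.
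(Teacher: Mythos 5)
Your proposal is correct and follows exactly the paper's own route: the paper derives this corollary by observing that for $Q=-P-\lambda\id$ (i.e.\ $\Pi(x)=-x-\lambda$) the $\Pi$-admissible equations hold automatically by Corollary~\ref{co:special}, and then applying Proposition~\ref{cor:cons}.(\ref{it:cons2}). Your additional checks --- the $\lambda=0$ versus $\lambda\neq 0$ branch of $\admset$, and that $T\Pi(\alpha)=\Pi(P)T$ follows from $T\alpha=PT$ --- are exactly the points the paper handles in the lead-up to Corollary~\ref{co:special} and Proposition~\ref{co:semisemi}.
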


We then display explicitly solutions of the admissible \aybe that give Rota-Baxter \asi bialgebras obtained from the $\mathcal O$-operators
in Example~\ref{ex:oop}.

\begin{cor}
Let $(A, P)$ be a Rota-Baxter algebra of weight $\lambda$.
\begin{enumerate}
\item Denote $r_1:=\sum^n_{i=1}
(e_i\otimes e^i-e^i\otimes e_i)$, where $\{e_1,\cdots, e_n\}$ is a
basis of $A$ and $\{e^1,\cdots, e^n\}$ is its dual basis. Then $r_1$ is a
solution of the $(-P-\lambda\id_A+P^*)$-admissible \aybe in the Rota-Baxter algebras $(A\ltimes_{R^*, 0} A^*, P-P^*-\lambda\id_{A^*})$ and
$(A\ltimes_{0,L^*} A^*, P-P^*-\lambda\id_{A^*})$.
 Moreover, the two Rota-Baxter algebras are $(-P-\lambda
 \id_A+P^*)$-admissible and hence there are
 Rota-Baxter \asi bialgebras $((A\ltimes_{R^*, 0} A^*, P-P^*-\lambda\id_{A^*}),\Delta,-P-\lambda\id_A+P^*)$ and $((A\ltimes_{0, L^*} A^*, P-P^*-\lambda\id_{A^*}),\Delta,-P-\lambda\id_A+P^*)$, where the linear map $\Delta=\Delta_{r_1}$ is defined
by Eq.~$($\mref{eq:4.1}$)$ with the above $r_1$. \mlabel{it:4.23a}
\item Suppose that $\lambda=0$.
The element $r_2:=P-\sigma(P)$ is a solution of the
$(-P+P^*)$-admissible \aybe in the Rota-Baxter algebra
$(A\ltimes_{R^*, L^*} A^*, P-P^*)$.
 Moreover, there
 is a Rota-Baxter \asi bialgebra $((A\ltimes_{R^*, L^*} A^*,
P-P^*),\Delta,-P+P^*)$, where the linear map
$\Delta=\Delta_{r_2}$ is defined by Eq.~$($\mref{eq:4.1}$)$ with the above $r_2$. \mlabel{it:4.23aa}
\end{enumerate}

\mlabel{cor:4.23}
\end{cor}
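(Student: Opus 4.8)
The plan is to obtain both parts as instances of Corollary~\mref{cor:cons2} applied to the explicit $\mathcal{O}$-operators of Example~\mref{ex:oop}. Recall that Corollary~\mref{cor:cons2} takes an $\mathcal{O}$-operator $T:V\to A$ associated to a representation $(V,\ell,r,\alpha)$ of a Rota-Baxter algebra $(A,P)$ of weight $\lambda$ and yields the antisymmetric element $r=T-\sigma(T)$ as a solution of the $(-P-\lambda\id_A+\alpha^*)$-admissible \aybe in the Rota-Baxter algebra $(A\ltimes_{r^*,\ell^*}V^*,\,P-\alpha^*-\lambda\id_{V^*})$, together with the Rota-Baxter \asi bialgebra $((A\ltimes_{r^*,\ell^*}V^*,P-\alpha^*-\lambda\id_{V^*}),\Delta_r,-P-\lambda\id_A+\alpha^*)$. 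So all that remains is to identify in each case the representation, its dual $(V^*,r^*,\ell^*,\alpha^*)$, and the $2$-tensor $T-\sigma(T)$, where (in the notation of Theorem~\mref{thm:4.21}) $T$ is identified with $\sum_i T(e_i)\ot e^i$ for a basis $\{e_i\}$ of $V$ with dual basis $\{e^i\}$.

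For Part~\mref{it:4.23a}, take $V=A$ and use the representations $(A,0,R,P)$ and $(A,L,0,P)$ of $(A,P)$, each with $T=\id_A$ as the associated $\mathcal{O}$-operator, as in Example~\mref{ex:oop}(\mref{ex:4.18}); here Eqs.~\meqref{eq:4.18} and~\meqref{eq:4.17} for $T=\id_A$ hold trivially, while the condition that the quadruple be a representation of the Rota-Baxter algebra $(A,P)$ (Eqs.~\meqref{eq:2.1}--\meqref{eq:2.2}) reduces to the Rota-Baxter identity~\meqref{eq:1.1} together with a vacuous identity. The dual representation of $(A,0,R,P)$ is $(A^*,R^*,0,P^*)$, so the semidirect product is $A\ltimes_{R^*,0}A^*$ with Rota-Baxter operator $P-P^*-\lambda\id_{A^*}$; the dual of $(A,L,0,P)$ is $(A^*,0,L^*,P^*)$, giving $A\ltimes_{0,L^*}A^*$ with the same operator. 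In both cases $\alpha=P$, hence $-P-\lambda\id_A+\alpha^*=-P-\lambda\id_A+P^*$, and $T=\id_A$ is identified with $\sum_i e_i\ot e^i$, so that $r=T-\sigma(T)=\sum_i(e_i\ot e^i-e^i\ot e_i)=r_1$. Corollary~\mref{cor:cons2} now gives exactly the two asserted statements, including the $(-P-\lambda\id_A+P^*)$-admissibility of the two Rota-Baxter algebras and the two bialgebra structures with $\Delta=\Delta_{r_1}$.

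For Part~\mref{it:4.23aa}, assume $\lambda=0$, take $V=A$ with the adjoint representation $(A,L,R,P)$ of $(A,P)$ and $T=P$, which is an $\mathcal{O}$-operator by Example~\mref{ex:oop}(\mref{ex:4.19}) (for $T=P$, Eq.~\meqref{eq:4.18} is the weight-zero Rota-Baxter identity and Eq.~\meqref{eq:4.17} holds trivially). The dual representation is $(A^*,R^*,L^*,P^*)$, so the semidirect product is $A\ltimes_{R^*,L^*}A^*$ with Rota-Baxter operator $P-P^*$; here $-P-\lambda\id_A+\alpha^*=-P+P^*$, and $T=P$ is identified with $\sum_i P(e_i)\ot e^i$, so $r=T-\sigma(T)=P-\sigma(P)=r_2$. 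Corollary~\mref{cor:cons2} then yields that $r_2$ is a solution of the $(-P+P^*)$-admissible \aybe in $(A\ltimes_{R^*,L^*}A^*,P-P^*)$ and that $((A\ltimes_{R^*,L^*}A^*,P-P^*),\Delta,-P+P^*)$ is a Rota-Baxter \asi bialgebra with $\Delta=\Delta_{r_2}$.

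Since the statement is a direct specialization, there is no substantial obstacle. The one point requiring care is bookkeeping: the dual of $(V,\ell,r)$ is $(V^*,r^*,\ell^*)$ with the left and right module maps interchanged, so one must check that the semidirect products come out as $A\ltimes_{R^*,0}A^*$, $A\ltimes_{0,L^*}A^*$ and $A\ltimes_{R^*,L^*}A^*$ and not with the module maps swapped; one must also track the signs in $-P-\lambda\id_A+\alpha^*$ and $P-\alpha^*-\lambda\id_{V^*}$ and confirm that $T-\sigma(T)$ specializes to $r_1$ and $r_2$ respectively.
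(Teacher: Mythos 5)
Your proposal is correct and follows essentially the same route as the paper: both parts are obtained by feeding the $\mathcal{O}$-operators of Example~\ref{ex:oop} (namely $\id$ for $(A,L,0,P)$, $(A,0,R,P)$ and $P$ for $(A,L,R,P)$) into Corollary~\ref{cor:cons2}, with $\id=\sum_i e_i\ot e^i$ giving $r_1$ and $P=\sum_i P(e_i)\ot e^i$ giving $r_2$. Your extra bookkeeping on the dual representations, the semidirect-product structure and the signs is accurate and merely makes explicit what the paper leaves implicit.
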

\begin{proof} (\mref{it:4.23a})  By Example~\mref{ex:oop}.(\mref{ex:4.18}), $\id$ is an $\mathcal{O}$-operator associated to $(A, L, 0, P)$ or $(A, 0, R,
P)$. Note that ${\rm id}=\sum^n_{i=1} (e_i\otimes e^i)$. Hence the
conclusion follows from
Corollary~\mref{cor:cons2}.

\smallskip

\noindent (\mref{it:4.23aa}) By
Example~\mref{ex:oop}.(\mref{ex:4.19}), the Rota-Baxter operator
$P$ is an $\mathcal{O}$-operator associated to $(A, L, R, P)$.
Hence the conclusion follows from
Corollary~\mref{cor:cons2}.
\end{proof}

\section{Dendriform algebras and Rota-Baxter dendriform algebras}
\mlabel{sec:dend}

We first recall the well-known fact that a Rota-Baxter algebra induces a dendriform
algebra. Then we show that a Rota-Baxter \asi bialgebra gives a quadri-bialgebra introduced in \mcite{NB}. We also prove that a Rota-Baxter
dendriform algebra gives an $\mathcal{O}$-operator on the
associated Rota-Baxter algebra and hence induces a Rota-Baxter
\asi bialgebra.

\subsection{Dendriform algebras and quadri-bialgebras}
\mlabel{ss:dend}
\begin{defi} \mcite{Lo} Let $A$ be a vector space with multiplications $\prec$ and $\succ$. Then $(A, \prec, \succ)$ is called a {\bf dendriform algebra} if for all $a, b, c\in A$,
\small{
$$
 (a\prec b)\prec z=a\prec (b\prec c+b\succ c),
 (a\succ b)\prec c=a\succ (b\prec c),
 (a\prec b+a\succ b)\succ c=a\succ (b\succ c).
$$
}
\end{defi}

 Let $(A,\prec,\succ)$ be a dendriform algebra.
For $a\in A$, let $L_{\prec}(a)$, $R_{\prec}(a)$
and $L_{\succ}(a)$, $R_{\succ}(a)$ denote the left and right
multiplication operators on $(A,\prec)$ and $(A,\succ)$,
respectively.
 Furthermore, define linear maps
 $$R_{\prec}, L_{\succ}: A\to  \End_K(A), \quad
 a\mapsto R_{\prec}(a),~a\mapsto L_{\succ}(a),\;\;\forall a\in A.
 $$

As is well known, for a dendriform algebra $(A,\prec,\succ)$, the multiplication
\begin{equation} a*b=a\prec b+a\succ b,\quad \forall a,b\in A\mlabel{eq:5.41}\end{equation}
defines an algebra $(A,*)$, called the {\bf associated algebra} of
the dendriform algebra. Moreover, $(A,L_\succ, R_\prec)$ is a
representation of the algebra  $(A,*)$~\mcite{Bai1,Lo}. Further, a
Rota-Baxter operator $P$ of weight zero on an algebra $(A,\cdot)$
induces a dendriform algebra~\mcite{Ag}:
\begin{equation}
a\succ b=P(a)\cdot b, a\prec b=a\cdot P(b), \quad
\forall a,b\in A.\mlabel{eq:Ag}\end{equation}

It is natural to ask what algebraic structures can be  induced
from the pair of Rota-Baxter operators on the algebra and the
coalgebra in a Rota-Baxter \asi bialgebra of weight
zero. To address this question, we first recall the following
notions.

\begin{defi} {\rm (\mcite{Bai1,NB})}  Let $(A,\prec,\succ)$ be a dendriform
algebra and let $(A, *)$  be the associated algebra.
A symmetric bilinear form $\frakB:A\otimes A\rightarrow K$ is called a {\bf $2$-cocycle} of
$(A,\prec,\succ)$ if $\frakB$ satisfies
\begin{equation}
\frakB(a* b,c)=\frakB(b,c\prec a)+\frakB(a,b\succ c), \quad \forall
a,b,c\in A.\end{equation}
A {\bf Manin triple of
dendriform algebras} with respect to a nondegenerate 2-cocycle is a
triple of dendriform algebras $(A,A^{+},A^{-})$ together with a
nondegenerate $2$-cocycle ${\frakB}$ on $A$, such that
\begin{enumerate}
\item $A^{+}$ and $A^{-}$ are dendriform subalgebras of $A$; \item
$A=A^{+}\oplus A^{-}$ as vector spaces; \item $A^{+}$ and $A^{-}$
are isotropic with respect to $\frakB$.
\end{enumerate}
\end{defi}

\begin{lem}
Let $(A, P, \mathfrak{B})$ be a Rota-Baxter symmetric
Frobenius algebra of weight zero. Then $\mathfrak{B}$ satisfies
\begin{equation}
    \mathfrak{B}(a\prec b,c)=\mathfrak{B}(a,b\succ c),\;\;\forall
    a,b,c\in A,
\end{equation}
where $(A,\prec,\succ)$ is the dendriform algebra defined in
Eq.~(\mref{eq:Ag}). In particular, $\mathfrak{B}$ is a  2-cocycle
of the dendriform algebra $(A,\prec,\succ)$.
\mlabel{lem:quadri}
\end{lem}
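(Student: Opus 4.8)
The plan is to derive both assertions directly from the invariance and symmetry of $\frakB$. Note that the Rota-Baxter hypothesis on $P$ (of weight zero) is used only to guarantee, via \mcite{Ag}, that Eq.~\meqref{eq:Ag} genuinely defines a dendriform algebra $(A,\prec,\succ)$; the Rota-Baxter identity~\meqref{eq:1.1} itself will not be invoked in the computation.

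First I would prove the displayed identity. By Eq.~\meqref{eq:Ag} we have $a\prec b=a\cdot P(b)$ and $b\succ c=P(b)\cdot c$, so the invariance of $\frakB$ (Eq.~\meqref{eq:1.3}), applied to the triple $(a,P(b),c)$, gives
$$\frakB(a\prec b,c)=\frakB(a\cdot P(b),c)=\frakB(a,P(b)\cdot c)=\frakB(a,b\succ c).$$
This step uses only invariance, not symmetry.

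For the ``in particular'' clause I would check the $2$-cocycle relation $\frakB(a*b,c)=\frakB(b,c\prec a)+\frakB(a,b\succ c)$, where by Eq.~\meqref{eq:5.41} the associated product is $a*b=a\prec b+a\succ b=a\cdot P(b)+P(a)\cdot b$. Expanding $\frakB(a*b,c)$ by bilinearity, the term $\frakB(a\cdot P(b),c)$ equals $\frakB(a,b\succ c)$ by the identity just established. For the remaining term I would chain invariance and symmetry: $\frakB(P(a)\cdot b,c)=\frakB(P(a),b\cdot c)=\frakB(b\cdot c,P(a))=\frakB(b,c\cdot P(a))=\frakB(b,c\prec a)$, using invariance, then symmetry, then invariance again, then Eq.~\meqref{eq:Ag}. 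Summing the two terms yields exactly the cocycle identity.

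I do not anticipate a genuine obstacle here; the care required is purely bookkeeping: tracking which of bilinearity, symmetry, and invariance of $\frakB$ is applied at each step, and not conflating the original product $\cdot$ (which carries the invariance of $\frakB$ directly) with the associated dendriform product $*$ (which in general differs from $\cdot$). It is worth emphasizing that the symmetric Frobenius hypothesis is used only through symmetry of $\frakB$, while its nondegeneracy plays no role in this particular lemma.
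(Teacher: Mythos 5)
Your proof is correct and follows essentially the same route as the paper: the displayed identity comes from invariance of $\mathfrak{B}$ applied to the triple $(a,P(b),c)$, and the $2$-cocycle relation follows by splitting $\mathfrak{B}(a*b,c)$ and handling the $\succ$-term via symmetry and invariance (the paper cites the just-proved identity with permuted arguments where you unwind it into explicit invariance/symmetry steps, but the computation is identical). Your side remarks --- that the Rota-Baxter identity enters only to make Eq.~\eqref{eq:Ag} a dendriform structure and that nondegeneracy is not used --- are accurate.
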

\begin{proof} Let $\cdot$ denote the associative product on $(A,P)$. Let $a,b,c\in A$. Then we have
    \begin{eqnarray*}
        \frakB(a\prec b,c)=\frak B(a\cdot P(b),c)=\frak B(a,P(b)\cdot
        c)=\frak B(a,b\succ c).
    \end{eqnarray*}
    Furthermore, we have
    \begin{eqnarray*}
        \frakB(a*b,c)=\frakB(a\prec b+a\succ b,c)=\frak B(a,b\succ
        c)+\frak B(c,a\succ b)=\frak B(a,b\succ c)+\frak B(b,c\prec a).
    \end{eqnarray*}
    Hence $\mathfrak{B}$ is a 2-cocycle of the dendriform algebra
    $(A,\prec,\succ)$. 
\end{proof}

\begin{cor} Let $((A, P), \Delta, Q)$ be a Rota-Baxter
\asi bialgebra, where $(A, \cdot, P)$ is a
$Q$-admissible Rota-Baxter algebra of weight zero and  $(A^*,
\circ, Q^*)$ is a $P^*$-admissible Rota-Baxter algebra of weight zero for which the product $\circ$ is given by
$\Delta^*:A^*\otimes A^*\rightarrow A^*$. With the product $\star$ on
$A\bowtie A^*$ in Eq.~\eqref{eq:3.7}, define
$$x\succ y:=(P+Q^{*})(x)\star y,\;\;
x\prec y:=x\star (P+Q^{*})(y),\;\;\forall x,y\in A\oplus A^*.$$
Then $(A\oplus A^*, \prec,\succ)$ is a dendriform algebra and $(A,
\prec_{|_A},\succ_{|_A})$ ,
$(A^*,\prec_{|_{A^*}},\succ_{|_{A^*}})$ are dendriform
subalgebras. Moreover, with these dendriform algebra structures,
$(A\oplus A^*, A, A^*)$ is a Manin triple of dendriform algebras
with respect to the nondegenerate 2-cocycle $\mathfrak{B}_d$ defined
by Eq.~$($\mref{eq:3.9}$)$. \mlabel{cor:4.26}
 \end{cor}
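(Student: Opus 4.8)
The plan is to read everything off from the double construction of Rota-Baxter Frobenius algebra attached to the given bialgebra, combined with the elementary construction~\meqref{eq:Ag} and Lemma~\mref{lem:quadri}. First, by Theorem~\mref{thm:rbbial} the weight-zero Rota-Baxter \asi bialgebra $((A,P),\Delta,Q)$ is equivalent to a double construction of Rota-Baxter Frobenius algebra $(A\bowtie A^*, P+Q^*, \frakB_d)$; hence $(A\bowtie A^*,\star)$ is an associative algebra containing $(A,\cdot)$ and $(A^*,\circ)$ as subalgebras, $P+Q^*$ is a Rota-Baxter operator of weight zero on $(A\bowtie A^*,\star)$ whose restrictions to $A$ and $A^*$ are $P$ and $Q^*$, and $\frakB_d$ is a nondegenerate invariant bilinear form that is symmetric (symmetry and the explicit values being visible in Eq.~\meqref{eq:3.9}). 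In particular $(A\bowtie A^*, P+Q^*, \frakB_d)$ is a Rota-Baxter symmetric Frobenius algebra of weight zero.

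Next I would apply Eq.~\meqref{eq:Ag} to the weight-zero Rota-Baxter algebra $(A\bowtie A^*,\star,P+Q^*)$. The induced dendriform operations are precisely $x\succ y=(P+Q^*)(x)\star y$ and $x\prec y=x\star(P+Q^*)(y)$, which are exactly those named in the statement, so $(A\oplus A^*,\prec,\succ)$ is a dendriform algebra. Since $A$ and $A^*$ are $\star$-subalgebras of $A\bowtie A^*$ stable under $P+Q^*$, they are stable under $\succ$ and $\prec$; thus $(A,\prec_{|_A},\succ_{|_A})$ and $(A^*,\prec_{|_{A^*}},\succ_{|_{A^*}})$ are dendriform subalgebras, in fact the dendriform algebras induced by $P$ on $(A,\cdot)$ and by $Q^*$ on $(A^*,\circ)$ through Eq.~\meqref{eq:Ag}. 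This yields the first two assertions.

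It then remains to verify the three defining conditions for $(A\oplus A^*,A,A^*)$ to be a Manin triple of dendriform algebras with respect to $\frakB_d$. Conditions (i) and (ii) are exactly what has just been shown, together with the trivial decomposition $A\oplus A^*=A\oplus A^*$ of vector spaces. For (iii), the isotropy of $A$ and of $A^*$ is immediate from Eq.~\meqref{eq:3.9}, which gives $\frakB_d(x,y)=0$ for $x,y\in A$ and $\frakB_d(a^*,b^*)=0$ for $a^*,b^*\in A^*$. Finally, $\frakB_d$ is a nondegenerate $2$-cocycle of $(A\oplus A^*,\prec,\succ)$: nondegeneracy and symmetry are part of the Frobenius structure, and the $2$-cocycle identity follows from Lemma~\mref{lem:quadri} applied to the Rota-Baxter symmetric Frobenius algebra $(A\bowtie A^*,P+Q^*,\frakB_d)$, whose dendriform structure from Eq.~\meqref{eq:Ag} is exactly $(\prec,\succ)$. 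Combining these gives the Manin triple, completing the proof. I do not anticipate a genuine obstacle; the only point requiring care is the bookkeeping that the weight is zero throughout, so that Eq.~\meqref{eq:Ag} and Lemma~\mref{lem:quadri} apply verbatim, and that the operations $\succ,\prec$ appearing in those two inputs coincide with the ones defined in the corollary.
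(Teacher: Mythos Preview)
Your proposal is correct and follows essentially the same route as the paper's proof: invoke Theorem~\mref{thm:rbbial} to obtain the weight-zero Rota-Baxter symmetric Frobenius algebra $(A\bowtie A^*,\star,P+Q^*,\frakB_d)$, apply Eq.~\meqref{eq:Ag} to produce the dendriform structure with $A$ and $A^*$ as subalgebras, and then use Lemma~\mref{lem:quadri} to show $\frakB_d$ is a nondegenerate $2$-cocycle. Your write-up is in fact slightly more explicit than the paper's in spelling out the isotropy condition and the stability of $A,A^*$ under $P+Q^*$, but there is no substantive difference in strategy.
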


 \begin{proof} By Theorem~\mref{thm:rbinfbialg}, $(A\bowtie A^*, \star, P+Q^{*})$ is a Rota-Baxter algebra of weight zero. Then by
Eq.~(\mref{eq:Ag}),  $(A\oplus A^*, \prec,\succ)$ is a dendriform
algebra and $(A, \prec_{|_A},\succ_{|_A})$ ,
$(A^*,\prec_{|_{A^*}},\succ_{|_{A^*}})$ are dendriform
subalgebras. Note that in fact $(A, \prec_{|_A},\succ_{|_A})$ ,
$(A^*,\prec_{|_{A^*}},\succ_{|_{A^*}})$ are the dendriform
algebras induced by the Rota-Baxter operators $P$ and $Q^*$
respectively by Eq.~(\mref{eq:Ag}).
Moreover, by Lemma~\mref{lem:quadri},
${\mathfrak{B}}_d$ is a symmetric nondegenerate 2-cocycle of $(A\oplus A^*, \prec, \succ)$. Hence the conclusion holds.
\end{proof}

\begin{rmk}\mlabel{rmk:4.26a}
There are several bialgebra theories for dendriform
algebras, such as dendriform bialgebra~\mcite{A4,LR1,LR2,Ron},
bidendriform bialgebra~\mcite{F2} and dendriform
$D$-bialgebra~\mcite{Bai1}. It came as a surprise that, by
\cite[Theorem 5.3]{NB}, a Manin triple of dendriform algebras
with respect to a nondegenerate 2-cocycle is none of them, but a
 {\bf quadri-bialgebra
} consisting of a quadri-algebra~\mcite{AL} and a quadri-coalgebra
satisfying certain compatibility conditions. See \cite[Definition
5.2]{NB}.
\end{rmk}

Then as a direct consequence of Corollary~\mref{cor:4.26}, we obtain
\begin{cor} \mlabel{co:quad}
For a Rota-Baxter \asi bialgebra $((A, P), \Delta, Q)$ of weight zero, the
Rota-Baxter operators $P$ and $Q^*$ induce a quadri-bialgebra.
\end{cor}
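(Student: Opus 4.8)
The plan is to read off the result from Corollary~\mref{cor:4.26} together with the dictionary, recalled in Remark~\mref{rmk:4.26a}, between Manin triples of dendriform algebras and quadri-bialgebras established in \cite[Theorem 5.3]{NB}.

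First I would check that the hypotheses of Corollary~\mref{cor:4.26} hold automatically here. Given a Rota-Baxter \asi bialgebra $((A,P),\Delta,Q)$ of weight zero, coassociativity of $\Delta$ makes $\circ:=\Delta^*$ an associative product on the (finite-dimensional) space $A^*$, and the Rota-Baxter coalgebra condition on $(A,\Delta,Q)$ makes $(A^*,\circ,Q^*)$ a Rota-Baxter algebra of weight zero. By Proposition~\mref{pro:asi2} (via Corollary~\mref{cor:pqadmin}), the compatibility conditions Eqs.~\meqref{eq:pduqr}--\meqref{eq:pduql} say exactly that $(A,\cdot,P)$ is $Q$-admissible, while Eqs.~\meqref{eq:pduqrd}--\meqref{eq:pduqld} say that $(A^*,\circ,Q^*)$ is $P^*$-admissible. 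Thus Corollary~\mref{cor:4.26} applies and produces a Manin triple of dendriform algebras $(A\oplus A^*, A, A^*)$ with respect to the nondegenerate $2$-cocycle $\frakB_d$ of Eq.~\meqref{eq:3.9}, in which the dendriform structure on the subalgebra $A$ (resp. $A^*$) is precisely the one induced via Eq.~\meqref{eq:Ag} by the Rota-Baxter operator $P$ (resp. $Q^*$).

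It then remains only to translate this Manin triple into a quadri-bialgebra using \cite[Theorem 5.3]{NB}: a Manin triple of dendriform algebras with respect to a nondegenerate $2$-cocycle is precisely the datum of a quadri-bialgebra, consisting of a quadri-algebra and a quadri-coalgebra with the appropriate compatibilities. Applying this translation to the Manin triple just obtained yields a quadri-bialgebra whose underlying quadri-algebra lives on $A$ and is built from the $P$-induced dendriform product on $A$ together with the action of $A^*$ on $A$ inside $A\bowtie A^*$, and whose quadri-coalgebra structure is obtained by dualizing, through $\frakB_d$, the $Q^*$-induced dendriform product on $A^*$. Hence the pair $P$, $Q^*$ induces a quadri-bialgebra, as asserted. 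There is no genuine obstacle to overcome: all the substantive content sits in Corollary~\mref{cor:4.26} and in \cite[Theorem 5.3]{NB}, and the only point needing care — that the two dendriform structures entering the Manin triple are exactly those coming from $P$ and from $Q^*$ — is already recorded in the proof of Corollary~\mref{cor:4.26}.
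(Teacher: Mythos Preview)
Your proposal is correct and follows exactly the paper's approach: the paper states the corollary as a direct consequence of Corollary~\mref{cor:4.26} together with the Manin-triple/quadri-bialgebra correspondence of \cite[Theorem 5.3]{NB} recalled in Remark~\mref{rmk:4.26a}. Your verification that the hypotheses of Corollary~\mref{cor:4.26} hold is simply an unpacking of Definition~\mref{de:rbbial} via Proposition~\mref{pro:asi2}, which the paper takes as understood.
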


\subsection{Rota-Baxter dendriform algebras}
\mlabel{ss:rbdend} We introduce the notion of
a Rota-Baxter dendriform algebra and study its
relationships with $\mathcal{O}$-operators on Rota-Baxter
algebras.

 \begin{defi} Let $(A,\prec,\succ)$ be a  dendriform algebra. A linear operator $P$ on $A$ is called a {\bf Rota-Baxter operator of weight $\lambda$} if $P$ satisfies
\begin{equation}
 P(a)\diamond P(b)=P(P(a)\diamond b)+P(a\diamond P(b))+\lambda P(a\diamond b), \quad \forall a, b\in A, \diamond\in \{\prec, \succ\}.
 \mlabel{eq:5.5}
 \end{equation}
Then $(A, \prec, \succ, P)$ is called a {\bf Rota-Baxter dendriform algebra of weight $\lambda$}.
\mlabel{de:5.1}
 \end{defi}

By a simple verification, we obtain

 \begin{pro}\mlabel{rmk:5.2} Let $(A, \prec, \succ, P)$ be a Rota-Baxter dendriform algebra of weight $\lambda$. Then with the product $*$ given by Eq.~\meqref{eq:5.41}, $(A, *, P)$ is a Rota-Baxter algebra of weight $\lambda$, called
 the {\bf associated Rota-Baxter algebra of $(A,\prec,\succ,P)$}.
On the other hand, let $(A, \bullet, P)$ be a Rota-Baxter algebra of weight
zero, then $(A, \prec_\bullet, \succ_\bullet, P)$ is a Rota-Baxter dendriform
algebra of weight zero, where $\succ_\bullet,\prec_\bullet$ is given by Eq.~$($\mref{eq:Ag}$)$.
 \end{pro}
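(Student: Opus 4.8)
The plan is to prove both assertions by direct verification, in each case using the Rota-Baxter identity for a single binary operation exactly once per dendriform component.

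For the first assertion, I would begin by recalling the standard fact, already used in the excerpt (around Eq.~\eqref{eq:5.41}), that the associated algebra $(A,*)$ of a dendriform algebra is associative; so only the Rota-Baxter relation Eq.~\eqref{eq:1.1} for $P$ with respect to $*$ remains to be checked. For this I would expand $P(a)*P(b)=P(a)\prec P(b)+P(a)\succ P(b)$, apply Eq.~\eqref{eq:5.5} with $\diamond=\prec$ and with $\diamond=\succ$ to the two summands, and add the two resulting identities. Because $x\prec y+x\succ y=x*y$ and $P$ is linear, the sum of the right-hand sides collapses to $P(P(a)*b)+P(a*P(b))+\lambda P(a*b)$, which is exactly Eq.~\eqref{eq:1.1} for $*$ of weight $\lambda$.

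For the second assertion, I would invoke Aguiar's result~\cite{Ag} that, for a Rota-Baxter algebra $(A,\bullet,P)$ of weight zero, the operations $a\succ_\bullet b:=P(a)\bullet b$ and $a\prec_\bullet b:=a\bullet P(b)$ of Eq.~\eqref{eq:Ag} satisfy the three dendriform axioms; it then remains only to check Eq.~\eqref{eq:5.5} with $\lambda=0$ for $\diamond\in\{\succ_\bullet,\prec_\bullet\}$. In the case $\diamond=\succ_\bullet$ the desired identity unravels to $P(P(a))\bullet P(b)=P\big(P(P(a))\bullet b\big)+P\big(P(a)\bullet P(b)\big)$, which is precisely the weight-zero Rota-Baxter relation Eq.~\eqref{eq:1.1} for $\bullet$ applied to the pair $(P(a),b)$; the case $\diamond=\prec_\bullet$ reduces symmetrically to Eq.~\eqref{eq:1.1} for $\bullet$ applied to $(a,P(b))$. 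This completes the verification.

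I do not anticipate a real obstacle, as each part is a short bookkeeping computation; the only point worth flagging is that the weight-zero hypothesis in the second assertion is genuinely needed — it is what makes Aguiar's two half-products dendriform in the first place and what lets the two Rota-Baxter identities above close up without a residual $\lambda$-term — so one should not expect this converse to extend verbatim to nonzero weight.
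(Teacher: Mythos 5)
Your proof is correct and is exactly the ``simple verification'' the paper alludes to without writing out: summing the two instances of Eq.~\eqref{eq:5.5} gives the Rota-Baxter identity for $*$, and the weight-zero Rota-Baxter identity applied to the pairs $(P(a),b)$ and $(a,P(b))$ gives Eq.~\eqref{eq:5.5} for $\succ_\bullet$ and $\prec_\bullet$. Your closing remark that the weight-zero hypothesis is essential for the converse direction is also well taken.
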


From Eqs.~(\mref{eq:2.1}), (\ref{eq:2.2}) and Definition~\mref{de:5.1}, we directly have

 \begin{pro}\mlabel{pro:5.3}Let $(A, \prec, \succ, P)$ be a Rota-Baxter dendriform algebra of weight $\lambda$. Then $(A, L_{\succ}, R_{\prec}, P)$ is a representation of the Rota-Baxter algebra $(A, \ast, P)$. Furthermore, the identity map $\id$ on $A$ is an $\mathcal O$-operator on the associated Rota-Baxter algebra $(A, \ast, P)$ associated to the representation $(A, L_{\succ}, R_{\prec}, P)$.
 \end{pro}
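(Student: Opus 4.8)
The plan is to reduce both assertions directly to the defining relations of a Rota-Baxter dendriform algebra, since the non-Rota-Baxter parts are already available.

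First I would invoke the well-known fact, recalled above from~\mcite{Bai1,Lo}, that for any dendriform algebra $(A,\prec,\succ)$ with associated algebra $(A,\ast)$ the triple $(A,L_{\succ},R_{\prec})$ is an $A$-bimodule over $(A,\ast)$. Thus, to show that $(A,L_{\succ},R_{\prec},P)$ is a representation of the Rota-Baxter algebra $(A,\ast,P)$, it remains only to verify conditions~\meqref{eq:2.1} and~\meqref{eq:2.2} with $V=A$, $\ell=L_{\succ}$, $r=R_{\prec}$ and $\alpha=P$. Unwinding the notation — and recalling that in~\meqref{eq:2.2} the symbol $r(a)$ acts on the right, so $v\,r(P(a))$ means $v\prec P(a)$ — Eq.~\meqref{eq:2.1} becomes $P(a)\succ P(v)=P(P(a)\succ v)+P(a\succ P(v))+\lambda P(a\succ v)$, which is exactly Eq.~\meqref{eq:5.5} for $\diamond=\succ$, while Eq.~\meqref{eq:2.2} becomes $P(v)\prec P(a)=P(P(v)\prec a)+P(v\prec P(a))+\lambda P(v\prec a)$, which is exactly Eq.~\meqref{eq:5.5} for $\diamond=\prec$. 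Hence both conditions hold.

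For the second claim I would check Definition~\mref{de:4.17} for the linear map $T=\id:A\to A$, together with the representation $(A,L_{\succ},R_{\prec})$ of $(A,\ast)$ and the operator $\alpha=P$. Condition~\meqref{eq:4.18} reads $\id(u)\ast\id(v)=\id\big(L_{\succ}(\id(u))v+u\,R_{\prec}(\id(v))\big)$, i.e.\ $u\ast v=u\succ v+u\prec v$, which is precisely the definition~\meqref{eq:5.41} of $\ast$; and condition~\meqref{eq:4.17}, namely $P\circ\id=\id\circ P$, is trivial. So $\id$ is a weak $\mathcal{O}$-operator associated to $(A,L_{\succ},R_{\prec})$ and $P$; and since $(A,L_{\succ},R_{\prec},P)$ was just shown to be a representation of the Rota-Baxter algebra $(A,\ast,P)$, the map $\id$ is in fact an $\mathcal{O}$-operator associated to $(A,L_{\succ},R_{\prec},P)$.

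The argument is entirely computational and presents no genuine obstacle; the only point to watch is the bookkeeping of the left/right-action conventions in~\meqref{eq:2.1}--\meqref{eq:2.2}, so that the two cases $\diamond\in\{\prec,\succ\}$ of~\meqref{eq:5.5} land on precisely the two required bimodule-compatibility identities. This is also why the hypothesis asks $P$ to be a Rota-Baxter operator for \emph{both} $\prec$ and $\succ$: each half-relation feeds one of the two conditions.
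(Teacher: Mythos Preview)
Your proposal is correct and follows essentially the same approach as the paper, which simply states that the result is immediate from Eqs.~\meqref{eq:2.1}, \meqref{eq:2.2} and Definition~\mref{de:5.1}. You have merely made explicit the direct verification the paper leaves to the reader.
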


Therefore, by Proposition~\ref{cor:cons}, any Rota-Baxter dendriform algebra $(A,\prec,\succ,P)$ of weight $\lambda$ that
satisfies the corresponding $\Pi$-admissible equations associated to $(A, L_{\succ}, R_{\prec}, P)$ for  $\Pi\in \admset$ can give a solution of the admissible
AYBE induced from the identity map $\id$ and hence gives a Rota-Baxter ASI bialgebra.
We illustrate the construction explicitly by considering the case when $Q=-P-\lambda \id$, that is, the construction of Rota-Baxter \asi bialgebras by Rota-Baxter dendriform algebras of weight $\lambda$.

\begin{pro}\mlabel{cor:5.8}
Let $(A, \prec, \succ, P)$ be a Rota-Baxter dendriform
 algebra of weight $\lambda$. Let $(A, *, P)$ be the associated Rota-Baxter algebra of weight $\lambda$, where $*$ is given by Eq.~$($\mref{eq:5.41}$)$.
Let $\{e_1,\cdots, e_n\}$ be a basis of $A$ and $\{e^1,\cdots,
e^n\}$ be its dual basis. Denote
\vspace{-.2cm}
$$r:=\sum^n_{i=1} (e_i\otimes e^i-e^i\otimes e_i)
\vspace{-.2cm}
$$
and define the linear map $\Delta=\Delta_r$
by Eq.~$($\mref{eq:4.1}$)$.
Then $r$ is a solution
of the $(-P-\lambda\id_A+P^*)$-admissible \aybe in the Rota-Baxter algebra
$(A\ltimes_{R_\prec^*, L_\succ^*} A^*, P-P^*-\lambda\id_{A^*})$.
Further the Rota-Baxter algebra $(A\ltimes_{R_\prec^*, L_\succ^*} A^*, P-P^*-\lambda\id_{A^*})$ is $(-P-\lambda\id_A+P^*)$-admissible and hence there is a Rota-Baxter \asi bialgebra $((A\ltimes_{R_\prec^*, L_\succ^*} A^*,
 P-P^*-\lambda\id_{A^*}),\Delta,-P-\lambda\id_A+P^*)$.
 \end{pro}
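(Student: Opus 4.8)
The plan is to recognize Proposition~\ref{cor:5.8} as the specialization of Corollary~\ref{cor:cons2} to the $\mathcal{O}$-operator furnished by Proposition~\ref{pro:5.3}, together with the explicit form of the resulting $2$-tensor. First I would recall from Proposition~\ref{pro:5.3} that $(A,L_\succ,R_\prec,P)$ is a representation of the associated Rota-Baxter algebra $(A,\ast,P)$ of weight $\lambda$, and that $\id\colon A\to A$ is an $\mathcal{O}$-operator associated to this representation. Thus $V=A$, $(\ell,r)=(L_\succ,R_\prec)$, $\alpha=P$, and $T=\id$, so that $T\alpha=PT$ is trivially satisfied and (since $T=\id$) the $2$-tensor $T\in (A\ltimes_{r^*,\ell^*}V^*)\otimes(A\ltimes_{r^*,\ell^*}V^*)$ is exactly $\sum_{i=1}^n e_i\otimes e^i$ for a basis $\{e_i\}$ of $A$ with dual basis $\{e^i\}$; hence $r=T-\sigma(T)=\sum_{i=1}^n(e_i\otimes e^i-e^i\otimes e_i)$, matching the displayed $r$ in the statement.

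Next I would invoke Corollary~\ref{cor:cons2} directly in the case $Q=-P-\lambda\id$: it asserts that for any $\mathcal{O}$-operator $T$ associated to a representation $(V,\ell,r,\alpha)$ of a Rota-Baxter algebra $(A,P)$ of weight $\lambda$, the element $T-\sigma(T)$ is an antisymmetric solution of the $(-P-\lambda\id_A+\alpha^*)$-admissible AYBE in the Rota-Baxter algebra $(A\ltimes_{r^*,\ell^*}V^*,\,P-\alpha^*-\lambda\id_{V^*})$, and moreover that this Rota-Baxter algebra is $(-P-\lambda\id_A+\alpha^*)$-admissible, so that $((A\ltimes_{r^*,\ell^*}V^*,\,P-\alpha^*-\lambda\id_{V^*}),\Delta,-P-\lambda\id_A+\alpha^*)$ with $\Delta=\Delta_r$ is a Rota-Baxter \asi bialgebra. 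Substituting $\alpha=P$, $\ell=L_\succ$, $r=R_\prec$ turns $\alpha^*$ into $P^*$, the semi-direct product into $A\ltimes_{R_\prec^*,L_\succ^*}A^*$, the operator on the double into $P-P^*-\lambda\id_{A^*}$, and the admissible operator into $-P-\lambda\id_A+P^*$, which is precisely the assertion of Proposition~\ref{cor:5.8}.

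The only point requiring a small remark is that the hypotheses of Corollary~\ref{cor:cons2} are met: one needs $(A,P)$ to be $Q$-admissible with $Q=-P-\lambda\id_A$, and one needs the $\Pi$-admissible equations associated to $(V,\ell,r,\alpha)=(A,L_\succ,R_\prec,P)$ to hold for $\Pi=-x-\lambda$ (equivalently $\Pi=-x$ when $\lambda=0$). But Corollary~\ref{co:special} says exactly that for a representation $(V,\ell,r,\alpha)$ of $(A,P)$ the quadruple $(V^*,r^*,\ell^*,-\alpha^*-\lambda\id)$ is again a representation and $-P-\lambda\id_A+\alpha^*$ is admissible to $(A\ltimes_{r^*,\ell^*}V^*,P-\alpha^*-\lambda\id)$, with \emph{no} further restraints; this is precisely the source of Corollary~\ref{cor:cons2}, and in particular applies to the representation $(A,L_\succ,R_\prec,P)$ of $(A,\ast,P)$ guaranteed by Proposition~\ref{pro:5.3}. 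I do not expect any genuine obstacle here; the ``hard part'' is purely bookkeeping — checking that the identifications $\alpha\mapsto P$, $\alpha^*\mapsto P^*$, $(\ell,r)\mapsto(L_\succ,R_\prec)$ carry the general statement of Corollary~\ref{cor:cons2} onto the stated one, and that $T=\id$ yields the displayed $r$. I would therefore write the proof in two or three sentences: cite Proposition~\ref{pro:5.3} to produce the $\mathcal{O}$-operator $\id$ associated to $(A,L_\succ,R_\prec,P)$, observe that the corresponding $2$-tensor is $\sum_i e_i\otimes e^i$ so that $r=T-\sigma(T)$ is as displayed, and then apply Corollary~\ref{cor:cons2}.
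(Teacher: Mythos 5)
Your proposal is correct and follows exactly the paper's own argument: invoke Proposition~\ref{pro:5.3} to get the $\mathcal{O}$-operator $\id$ associated to $(A,L_\succ,R_\prec,P)$, identify $\id$ with the tensor $\sum_i e_i\otimes e^i$ so that $r=T-\sigma(T)$ is the displayed element, and apply Corollary~\ref{cor:cons2} with $\alpha=P$. Your additional remark that Corollary~\ref{co:special} guarantees the hypotheses hold without further restraint is precisely the observation the paper makes just before stating Corollary~\ref{cor:cons2}.
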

 \begin{proof}
By Proposition~\mref{pro:5.3},
$\id$ is an $\mathcal{O}$-operator on the associated Rota-Baxter algebra $(A,\ast , P)$
associated to the representation $(A, L_{\succ}, R_{\prec}, P)$.
Note that ${\rm id}=\sum^n_{i=1} (e_i\otimes e^i)$.
Then the conclusion follows from Corollary~\mref{cor:cons2}.
\end{proof}

\begin{rmk}\mlabel{rmk:5.9}
 Corollary~ \mref{cor:4.23}.(\mref{it:4.23a}) is a special case of Proposition \mref{cor:5.8}, since the former corresponds to the trivial Rota-Baxter dendriform algebra structure on a Rota-Baxter algebra $(A, \cdot, P)$ given by $\succ=\cdot,\prec=0$ or $\succ=0, \prec=\cdot$.
 \end{rmk}

By Proposition~\mref{cor:5.8} we have

\begin{cor}\mlabel{cor:construction}
Let $(A,\cdot,P)$ be a Rota-Baxter algebra of weight zero. Let
$(A,\prec_\cdot,\succ_\cdot,P)$ be the Rota-Baxter dendriform
algebra of weight zero given in Proposition~ \mref{rmk:5.2}. Then
we have
$$
L_{\succ_\cdot}=L_\cdot P,\quad R_{\prec_\cdot}=R_\cdot P.
$$
Let $(A,*_\cdot,P)$ be the associated Rota-Baxter algebra of
$(A,\prec_\cdot,\succ_\cdot,P)$. Then $(A,L_\cdot P, R_\cdot P,
P)$ is a representation of the Rota-Baxter algebra
$(A,*_\cdot,P)$. Let $r$ and $\Delta=\Delta_r$ be as defined in
Proposition~\mref{cor:5.8} and Eq.~$($\mref{eq:4.1}$)$
respectively. Then $r$ is a solution of the
$(-P+P^*)$-admissible \aybe in the Rota-Baxter algebra
$(A\ltimes_{(R_\cdot P)^*, (L_\cdot P)^*} A^*, P-P^*)$.
 Moreover,  this Rota-Baxter algebra is $(-P+P^*)$-admissible and hence there is a Rota-Baxter \asi bialgebra $((A\ltimes_{(R_\cdot P)^*, (L_\cdot P)^*}
A^*, P-P^*),\Delta,-P+P^*))$.
\end{cor}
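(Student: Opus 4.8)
The plan is to obtain every assertion as a direct specialization of Proposition~\mref{cor:5.8} to the Rota-Baxter dendriform algebra of weight zero attached to $(A,\cdot,P)$ by Proposition~\mref{rmk:5.2}, so the only genuine work is identifying the one-sided multiplication operators.

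First I would record those identities. By construction, $(A,\prec_\cdot,\succ_\cdot,P)$ is the Rota-Baxter dendriform algebra of weight zero with $a\succ_\cdot b=P(a)\cdot b$ and $a\prec_\cdot b=a\cdot P(b)$ for all $a,b\in A$, as in Eq.~\meqref{eq:Ag}. Reading off the left multiplication on $(A,\succ_\cdot)$ and the right multiplication on $(A,\prec_\cdot)$ then gives $L_{\succ_\cdot}(a)=L_\cdot(P(a))$ and $R_{\prec_\cdot}(a)=R_\cdot(P(a))$ for all $a\in A$, that is, $L_{\succ_\cdot}=L_\cdot P$ and $R_{\prec_\cdot}=R_\cdot P$ as maps $A\to\End_K(A)$. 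This is immediate from Eq.~\meqref{eq:Ag} and is the only computation required.

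Next I would invoke Proposition~\mref{pro:5.3} for the dendriform algebra $(A,\prec_\cdot,\succ_\cdot,P)$: it yields that $(A,L_{\succ_\cdot},R_{\prec_\cdot},P)=(A,L_\cdot P,R_\cdot P,P)$ is a representation of the associated Rota-Baxter algebra $(A,*_\cdot,P)$ (with $*_\cdot$ given by Eq.~\meqref{eq:5.41}) and that the identity map $\id$ is an $\mathcal{O}$-operator on $(A,*_\cdot,P)$ associated to this representation.

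Finally, I would apply Proposition~\mref{cor:5.8} to $(A,\prec_\cdot,\succ_\cdot,P)$ with weight $\lambda=0$. Then $-P-\lambda\id_A+P^*=-P+P^*$ and $P-P^*-\lambda\id_{A^*}=P-P^*$, while $R_{\prec_\cdot}^*=(R_\cdot P)^*$ and $L_{\succ_\cdot}^*=(L_\cdot P)^*$ by the identities above; so $r=\sum_{i=1}^n(e_i\otimes e^i-e^i\otimes e_i)$ is an antisymmetric solution of the $(-P+P^*)$-admissible \aybe in $(A\ltimes_{(R_\cdot P)^*,(L_\cdot P)^*}A^*,P-P^*)$, this Rota-Baxter algebra is $(-P+P^*)$-admissible, and $((A\ltimes_{(R_\cdot P)^*,(L_\cdot P)^*}A^*,P-P^*),\Delta,-P+P^*)$ is a Rota-Baxter \asi bialgebra with $\Delta=\Delta_r$ as in Eq.~\meqref{eq:4.1}. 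Since each conclusion is an instance of a result already proved, no step presents a real obstacle: the proof is bookkeeping, the substance being only the elementary translation $L_{\succ_\cdot}=L_\cdot P$, $R_{\prec_\cdot}=R_\cdot P$ via Eq.~\meqref{eq:Ag}.
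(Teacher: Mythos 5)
Your proposal is correct and matches the paper's own (implicit) argument: the paper simply states the corollary as a consequence of Proposition~\ref{cor:5.8}, exactly as you do, with the only computation being the identification $L_{\succ_\cdot}=L_\cdot P$, $R_{\prec_\cdot}=R_\cdot P$ from Eq.~\eqref{eq:Ag} and the specialization $\lambda=0$. Nothing is missing.
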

\begin{rmk}
\mlabel{rk:final}
Combining Corollary~\mref{cor:4.23} and
Corollary~\mref{cor:construction}, from a
Rota-Baxter algebra $(A,\cdot,P)$  of weight zero, there are four
Rota-Baxter \asi bialgebras on the direct sum $A\oplus
A^*$ of the underlying vector spaces of $A$ and
$A^*$: $((A\ltimes_{R^*, L^*} A^*, P- P^*),\Delta,-P+P^*)$,
$((A\ltimes_{R^*, 0} A^*, P-P^*),\Delta,- P+P^*)$, $((A\ltimes_{0,
L^*} A^*, P- P^*),\Delta,-P+P^*)$ and $((A\ltimes_{(R_\cdot P)^*,
(L_\cdot P)^*} A^*, P- P^*),\Delta,- P+P^*)$, where the linear map
$\Delta$ is defined by Eq.~(\mref{eq:4.1}) with $r=P-\sigma(P)$
for the first case and with $r=\sum^n_{i=1} (e_i\otimes
e^i-e^i\otimes e_i) $ for the other three cases. Note that the
Rota-Baxter algebra structure on $A$ is $(A, \cdot, P)$ itself for
the first three cases and is $(A,*_\cdot,P)$ for the fourth case.
\end{rmk}

\noindent
 {\bf Acknowledgments.}  This work is supported by
 National Natural Science Foundation of China (Grant No.  11931009).  C. Bai is also
supported by the Fundamental Research Funds for the Central Universities and Nankai ZhiDe Foundation.

\vspace{-.2cm}

\end{document}